\documentclass[reqno]{amsart}
\usepackage[left=2.6cm,right=2.6cm,top=3cm,bottom=3cm]{geometry}
\usepackage[backref=page]{hyperref}
\usepackage[nameinlink,capitalize]{cleveref}
\usepackage{cite}
\usepackage{enumitem}
\usepackage{graphicx}
\usepackage{caption}
\usepackage{subcaption}

\theoremstyle{plain}
\newtheorem{theorem}{Theorem}

\newtheorem{lemma}[theorem]{Lemma}

\theoremstyle{definition}

\theoremstyle{remark}
\newtheorem{remark}{Remark}

\allowdisplaybreaks

\begin{document}

\author{Phuoc Vinh Dinh}
\address{Phuoc Vinh Dinh$^{1,2,3}$ (ORCID: 0009-0000-6297-7798)\newline
	$^1$Faculty of Applied Sciences, Ho Chi Minh City University of Technology (HCMUT), 268 Ly Thuong Kiet Street, Dien Hong Ward, Ho Chi Minh City, Vietnam;\newline
	$^2$Vietnam National University, Ho Chi Minh City, Vietnam\newline
	$^3$Department of Mathematics, FPT University, Ho Chi Minh City, Vietnam}
\email{dpvinh.sdh251@hcmut.edu.vn, vinhdp2@fe.edu.vn}

\author{Phuong Le}
\address{Phuong Le$^{4,2}$ (ORCID: 0000-0003-4724-7118)\newline
	$^4$Faculty of Economic Mathematics, University of Economics and Law, Ho Chi Minh City, Vietnam; \newline
	$^2$Vietnam National University, Ho Chi Minh City, Vietnam}
\email{phuongl@uel.edu.vn}

\author{Tien Dung Nguyen}
\address{Tien Dung Nguyen$^{1,2}$\newline
	$^1$Faculty of Applied Sciences, Ho Chi Minh City University of Technology (HCMUT), 268 Ly Thuong Kiet Street, Dien Hong Ward, Ho Chi Minh City, Vietnam;\newline
	$^2$Vietnam National University, Ho Chi Minh City, Vietnam}
\email{dungnt@hcmut.edu.vn}

\title[A free boundary model for invasive and native species]{A free boundary model for invasive and native species under shifting climate in the weak competition case}

\date{\today}

\begin{abstract}
	We study a free boundary problem for a diffusive Lotka--Volterra competition system describing the invasion of a new species into the habitat of a native competitor, in a habitat that is shifted from unfavourable to favourable at a constant speed $c>0$ by climate change. Only the invader feels the shifting environment and only its range is governed by a Stefan-type free boundary, while the native species occupies the whole half line. We work throughout in the weak competition regime, in which the two species may coexist. We prove a spreading--vanishing dichotomy: either the invader spreads and the pair converges to the coexistence steady state $(u^*,v^*)$, or the invader vanishes and the native species recovers its carrying capacity. In the vanishing case we obtain the explicit bound $\lim_{t\to\infty}h(t)\le\frac{\pi}{2}\sqrt{d_1c_2/(a_1c_2-a_2c_1)}$, and we give criteria guaranteeing each alternative. When spreading occurs, we determine the exact asymptotic spreading speed: $\lim_{t\to\infty}h(t)/t=\min\{c,c_0\}$, where $c_0$ is the spreading speed of the corresponding homogeneous weak competition system. In particular the invasion is slowed down both by the competitor and by the climate shift, and the slower of the two mechanisms is the one that determines the speed. Numerical simulations illustrate the results.
\end{abstract}

\subjclass[2020]{35K51, 35R35, 35B40, 92D25, 35Q92}

\dedicatory{}

\keywords{free boundary problem, shifting environment, climate warming, weak competition, semi-wave, spreading--vanishing dichotomy, asymptotic spreading speed}

\maketitle

\section{Introduction and statement of the results}\label{sec_1}

\subsection{Background}

Understanding how species respond to a changing climate is a central question in
spatial ecology, and reaction--diffusion equations have long been used to model
the shift of habitats and the propagation of populations; see
\cite{MR2471053,MR3691993,MR3377515} and the references therein. When a species
expands into new territory, the boundary of its range is not known in advance,
which naturally leads to free boundary problems. The starting point of this line
of research is the work of Du and Lin \cite{MR2607347}, where a diffusive
logistic model with a Stefan-type free boundary was introduced to describe the
spreading of a single species. They established a sharp spreading--vanishing
dichotomy: either the species spreads over the whole space and stabilises at its
carrying capacity, or it vanishes; which alternative occurs is governed by the
initial habitat size and by the expansion capacity of the species. We refer to
the survey \cite{MR4404216} for an account of the many developments that
followed.

To take the non-stationarity of the environment into account, several authors
have coupled the free boundary framework with a shifting habitat
\cite{MR3764580,MR3871607,MR3639148,MR4766631}. Of particular relevance to the
present paper is the model of Hu, Hao, Song and Du \cite{HU20205931},
\begin{equation}\label{single}
	\begin{cases}
		u_t = d_1 u_{xx} + (A(x - ct) - b_1 u) u, & t > 0,~ 0 < x < h(t), \\
		u_x(t, 0) = 0, ~ u(t, x) = 0, & t > 0, ~ h(t) \leq x < +\infty, \\
		h'(t) = -\mu (A(h(t) - ct))u_x(t, h(t)), & t > 0, \\
		h(0)=h_0,~ u(0, x) = u_0(x), & 0 \leq x \leq h_0,
	\end{cases}
\end{equation}
in which the environment turns from \emph{unfavourable to favourable} at speed
$c>0$, so that the species benefits from the climate shift, a situation which
is particularly relevant in invasion ecology, since many invasive species take
advantage of climate warming. For \eqref{single} a spreading--vanishing dichotomy
holds and, when spreading occurs, there is a critical speed
$\tilde c_0\in(0,2\sqrt{a_1d_1})$ such that the spreading profile is a semi-wave
with forced speed $c$ when $c<\tilde c_0$, and the usual semi-wave with speed
$\tilde c_0$ when $c\ge \tilde c_0$; in both cases the range expands at the
asymptotic speed $\min\{c,\tilde c_0\}$. (The notation $\tilde c_0$ is used here
only to avoid a clash with the speed $c_0$ of the competition system, introduced
in \eqref{c0} below.)

A second, independent extension of \cite{MR2607347} concerns the interaction
between an invader and a native competitor. In a homogeneous environment, Du and
Lin \cite{MR3327894} studied the two-species free boundary problem in which the
invader $u$ occupies the varying interval $[0,h(t)]$ while the resident $v$ lives
on the whole half line $[0,\infty)$. They showed that an inferior invader always
dies out, whereas a superior invader obeys a spreading--vanishing dichotomy; the
exact spreading speed of a superior invader was subsequently determined by Du,
Wang and Zhou \cite{MR3609207} through a semi-wave analysis, which appears to be
the first result of this kind for a system with a free boundary. The remaining
regime, \emph{weak competition}, in which the two species can coexist, was
treated by Wang, Nie and Du \cite{MR3986328}: a spreading--vanishing dichotomy
again holds, when the invasion succeeds both populations converge to the
coexistence state $(u^*,v^*)$, and the exact spreading speed exists and is
strictly smaller than the spreading speed of $u$ in the absence of $v$. Related
free boundary competition systems, including nonlocal ones and systems in which
both species have a free boundary, are studied in
\cite{Lei2018,MR4873231}.

The simultaneous effect of a shifting climate and of interspecific competition in
a free boundary setting was first addressed by Lei, Nie, Dong and Du
\cite{Lei2018}, where both competitors expand through free boundaries in a common
shifting habitat. In many real invasions, however, the native species is already
established across the whole habitat and does not experience a moving range
boundary; only the invader faces a climate-driven frontier. Motivated by this
observation, in paper \cite{DLN2026a} we studied
\begin{equation}\label{main}
	\begin{cases}
		u_t = d_1u_{xx} + (A(x - ct) - b_1u - c_1 v) u, & t > 0,~ 0 < x < h(t), \\
		v_t = d_2v_{xx} + (a_2 - b_2u - c_2v)v, & t > 0,~ 0 < x < +\infty, \\
		u_x(t, 0) = v_x(t, 0) = 0, ~ u(t, x) = 0, & t > 0, ~ h(t) \leq x < +\infty, \\
		h'(t) = -\mu (A(h(t) - ct))u_x(t, h(t)), & t > 0, \\
		h(0)=h_0,~ u(0, x) = u_0(x), & 0 \leq x \leq h_0, \\
		v(0, x) = v_0(x), & 0 \le x < +\infty,
	\end{cases}
\end{equation}
in the two \emph{weak--strong} competition regimes, that is, when the invader is
either an inferior or a superior competitor. The present paper is devoted to the
remaining case, and from the ecological point of view the most interesting,
case of \emph{weak competition}, in which neither species excludes the other and
coexistence is possible.

\subsection{The model and the standing assumptions}

In \eqref{main}, $u(t,x)$ and $v(t,x)$ denote the population densities of the
invasive and of the native species, and $x=h(t)$ is the free boundary, to be
determined together with $u$ and $v$. The invader exists initially on
$\{0\le x\le h_0\}$ and expands through the front $x=h(t)$, which obeys the
Stefan condition $h'(t)=-\mu(A(h(t)-ct))u_x(t,h(t))$; a derivation of this
condition from the consideration of population loss at the front may be found in
\cite{MR3764580}. The native species diffuses and grows on the whole habitat
$[0,\infty)$. Both species satisfy a no-flux condition at $x=0$. The constants
$d_1,d_2$ are the diffusion rates, $a_2$ is the intrinsic growth rate of $v$,
$b_1,c_2$ are the intra-specific and $c_1,b_2$ the inter-specific competition
rates, all assumed positive; $h_0>0$.

The intrinsic growth rate of the invader is $A(x-ct)$: the environment is shifted
at the constant speed $c>0$ in the direction of increasing $x$. Here $A$ should be
read as describing the \emph{climatic range limit of the invader}, which is what
moves, rather than climate change acting on the community as a whole: the native
competitor is already established throughout $[0,\infty)$ and, on the scales
relevant to the invasion, sits well inside its own climatic envelope, so that its
growth rate is effectively constant. See \cite[Remark 4]{DLN2026a} for a fuller
discussion, and \cite{Lei2018} for the complementary situation in which both
species are subject to a shifting habitat. Throughout the
paper we assume, as in \cite{HU20205931,DLN2026a}, that $A$ is Lipschitz
continuous on $\mathbb{R}$, strictly decreasing on $[0,l_0]$, and
\begin{equation}\label{A}
	A(\xi) =
	\begin{cases}
		a_1 & \text{ if } \xi \leq 0,\\
		a_0 & \text{ if } \xi \geq l_0,
	\end{cases}
\end{equation}
where $l_0 > 0$ and $a_0 < 0 < a_1$. Thus the region $\{x<ct\}$ is favourable for
the invader and the region $\{x>ct+l_0\}$ is unfavourable, and the favourable
region advances at speed $c$. The function $\mu$, which measures how easily the
range boundary expands, is Lipschitz continuous and increasing on $[a_0,a_1]$
with
\begin{equation}\label{mu}
	0 < \mu(a_0) \le \mu(a_1).
\end{equation}
It is convenient (and harmless) to extend $\mu$ to all of $\mathbb{R}$ by setting
$\mu(s):=\mu(a_0)$ for $s<a_0$ and $\mu(s):=\mu(a_1)$ for $s>a_1$; the extended
function is again Lipschitz and nondecreasing, and this extension is used only in
the construction of the perturbed semi-waves in \cref{sec_5}.

The initial functions satisfy
\begin{equation}\label{initial}
	\begin{cases}
		u_0 \in C^2([0,h_0]),& u_0 > 0 \text{ in } [0,h_0), ~ u_0'(0) = u_0(h_0) = 0,\\
		v_0 \in C^2([0,+\infty)) \cap L^\infty([0,+\infty)),& v_0 > 0 \text{ in } [0,+\infty), ~ v_0'(0) = 0,\\
		&\displaystyle\liminf_{x\to+\infty} v_0(x) > 0.
	\end{cases}
\end{equation}
The last requirement in \eqref{initial} says that the native species is already
established over the whole habitat; together with the continuity and the
positivity of $v_0$ it yields
\begin{equation}\label{v0inf}
	\sigma_0^{\max}:=\inf_{x\ge0} v_0(x) > 0 .
\end{equation}
This hypothesis is the exact analogue of \cite[(1.4)]{MR3986328} and of
\cite[(1.6)]{MR3609207}; it is used only in \cref{sec_6}, where the spreading
speed is determined, and it cannot be dispensed with there (see
\cite[Remark 3.5]{MR3609207}).

When the native competitor is absent ($v\equiv0$), \eqref{main} reduces to
\eqref{single}, studied in \cite{HU20205931}. When the shifting term is replaced
by the constant $a_1$, \eqref{main} reduces to the homogeneous competition system
of Du and Lin \cite{MR3327894},
\begin{equation}\label{main2}
	\begin{cases}
		u_t = d_1u_{xx} + (a_1 - b_1u - c_1 v) u, & t > 0,~ 0 < x < h(t), \\
		v_t = d_2v_{xx} + (a_2 - b_2u - c_2v)v, & t > 0,~ 0 < x < +\infty, \\
		u_x(t, 0) = v_x(t, 0) = 0, ~ u(t, x) = 0, & t > 0, ~ h(t) \leq x < +\infty, \\
		h'(t) = -\mu (a_1)u_x(t, h(t)), & t > 0, \\
		h(0)=h_0,~ u(0, x) = u_0(x), & 0 \leq x \leq h_0, \\
		v(0, x) = v_0(x), & 0 \le x < +\infty.
	\end{cases}
\end{equation}
Problem \eqref{main2} was analysed in \cite{MR3327894,MR3609207} under the
weak--strong competition conditions
$\frac{a_1}{a_2} < \min\{\frac{b_1}{b_2},\frac{c_1}{c_2}\}$ (inferior invader) or
$\frac{a_1}{a_2} > \max\{\frac{b_1}{b_2},\frac{c_1}{c_2}\}$ (superior invader),
and in \cite{MR3986328} under the weak competition condition
\begin{equation}\label{weak_competition}
	\frac{c_1}{c_2} < \frac{a_1}{a_2} < \frac{b_1}{b_2},
\end{equation}
which we assume from now on. Under \eqref{weak_competition} the kinetic system
associated with \eqref{main2} has the globally attracting coexistence state
\begin{equation}\label{coexist}
	u^* := \frac{a_1c_2-a_2c_1}{b_1c_2-b_2c_1},\qquad
	v^* := \frac{a_2b_1-a_1b_2}{b_1c_2-b_2c_1},
\end{equation}
and \eqref{weak_competition} guarantees $b_1c_2-b_2c_1>0$, $u^*>0$, $v^*>0$ and
\begin{equation}\label{ineq_star}
	0<u^*<\frac{a_1}{b_1},\qquad 0<v^*<\frac{a_2}{c_2},\qquad
	a_1-\frac{a_2c_1}{c_2}>0 .
\end{equation}
Throughout the paper we write
\begin{equation}\label{Rstar}
	R^*:=\frac{\pi}{2}\sqrt{\frac{d_1c_2}{a_1c_2-a_2c_1}},
	\qquad
	R_0:=\frac{\pi}{2}\sqrt{\frac{d_1}{a_1}},
\end{equation}
so that $0<R_0<R^*$ by \eqref{ineq_star}.

\subsection{Main results}

Our first result describes the long-time dynamics of \eqref{main}. Recall from
\cref{th6} below that $h$ is strictly increasing, so that
$h_\infty:=\lim_{t\to\infty}h(t)\in(h_0,+\infty]$ always exists.

\begin{theorem}\label{th:dynamics}
	Let $(u, v, h)$ be the unique solution of \eqref{main} with initial functions
	satisfying \eqref{initial}, and suppose that \eqref{weak_competition} holds.
	Then exactly one of the following alternatives occurs.
	\begin{enumerate}
		\item[(i)] \textit{Spreading of $u$}:
		\[
		h_\infty = +\infty \quad\text{ and }\quad \lim_{t\to\infty} (u(t,\cdot), v(t,\cdot)) = (u^*, v^*) \text{ in } C^2_{\rm loc}([0, \infty));
		\]
		\item[(ii)] \textit{Vanishing of $u$}:
		\[
		h_\infty \le R^* \quad\text{ and }\quad
		\lim_{t\to\infty} \|u(t,\cdot)\|_{C([0,h(t)])}=0,\quad
		\lim_{t\to\infty} v(t,\cdot) = \frac{a_2}{c_2} \text{ in } C^2_{\rm loc}([0, \infty)).
		\]
	\end{enumerate}
\end{theorem}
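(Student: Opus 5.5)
The proof splits according to whether $h_\infty=+\infty$ or $h_\infty<+\infty$. Since $h$ is strictly increasing by \cref{th6}, exactly one of these holds. We show that the first forces alternative (i) and the second forces alternative (ii).

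\emph{Vanishing case.} Suppose $h_\infty<\infty$. We first show $\|u(t,\cdot)\|_{C([0,h(t)])}\to0$, following \cite{MR2607347,MR3327894}. Straighten the boundary via $y=x/h(t)$ and use parabolic $L^p$ and Schauder estimates. Because $h$ is bounded, $A$ is Lipschitz, $u\le \max\{\|u_0\|_\infty,a_1/b_1\}$ and $v\le\max\{\|v_0\|_\infty,a_2/c_2\}$, these estimates give a uniform $C^{1+\alpha/2,1+\alpha}$ bound on $(u,h)$. Hence $\|h'\|_{C^{\alpha/2}}$ is bounded, and together with $h_\infty<\infty$ this yields $h'(t)\to0$. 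If $\limsup_{t\to\infty}\|u(t,\cdot)\|_\infty>0$, pass to a limit along $t_n\to\infty$. Since $x-ct\to-\infty$ uniformly on $[0,h_\infty]$, we have $A(x-ct)\to a_1$, so the limit is an entire solution on $[0,h_\infty]$ with Dirichlet data at $h_\infty$ and vanishing flux there, because $h'\to0$ and $\mu\ge\mu(a_0)>0$. The Hopf lemma contradicts this. Once $u\to0$ uniformly, a standard comparison for the logistic equation of $v$ on the half line, whose initial value has positive infimum, gives $v\to a_2/c_2$ in $C^2_{\rm loc}$.

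For the bound $h_\infty\le R^*$, argue by contradiction and assume $h_\infty>R^*$. Fix $\varepsilon>0$ small and choose $T$ so large that $h(T)>R^*_\varepsilon$, where $R^*_\varepsilon$ is the half-length for which the principal Neumann–Dirichlet eigenvalue of $d_1\partial_{xx}+(a_1-c_1(a_2/c_2+\varepsilon))$ on $(0,R^*_\varepsilon)$ is zero. Increasing $T$ if necessary, we also require $v\le a_2/c_2+\varepsilon$ and $A(x-ct)=a_1$ on $[0,h_\infty]$ for $t\ge T$. The lower bound on $v$ comes from comparing it with the solution of the logistic equation, and the identity for $A$ holds because $ct>h_\infty$ eventually. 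Then $u$ is a supersolution of $u_t\ge d_1u_{xx}+(a_1-c_1(a_2/c_2+\varepsilon)-b_1u)u$ on $[0,h(T)]$. Since the principal eigenvalue there is positive, $u$ is bounded below by a small positive eigenfunction multiple, contradicting $u\to0$. Letting $\varepsilon\to0$ gives exactly $R^*$.

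\emph{Spreading case.} Suppose $h_\infty=\infty$. For any $R$ and all large $t$, we have $h(t)>R$ and $x-ct<0$ on $[0,R]$, so $A=a_1$ there. The task is then to show convergence to $(u^*,v^*)$ in $C^2_{\rm loc}$. We do this by the iterative squeezing argument of \cite{MR3986328}, built from the upper bounds $\bar u_1=a_1/b_1$ and $\bar v_1=a_2/c_2$.

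The lower bounds are obtained on large intervals $[0,R]$. For $u$, use solutions of the Dirichlet problem $w_t=d_1w_{xx}+(a_1-c_1\bar v_k-\varepsilon-b_1w)w$; these converge, as $R\to\infty$ and $t\to\infty$, to $\underline u_k=(a_1-c_1\bar v_k)/b_1$ locally uniformly. This is positive for $k=1$ by \eqref{ineq_star}. For $v$, $\liminf v\ge \underline v_k=(a_2-b_2\bar u_k)/c_2$ holds on compacts. Since $a_2-b_2a_1/b_1>0$ under \eqref{weak_competition}, we get $\underline v_1>0$, and here $v$ lives on the whole half line, with no free boundary. Next, $\limsup u\le \bar u_{k+1}=(a_1-c_1\underline v_k)/b_1$ and $\limsup v\le\bar v_{k+1}=(a_2-b_2\underline u_k)/c_2$, by comparison with ODE solutions on the half line. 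The resulting monotone sequences converge to a solution of the linear system whose unique solution, under $b_1c_2>b_2c_1$, is $(u^*,v^*)$. The parabolic estimates from the first step then upgrade the convergence to $C^2_{\rm loc}$.

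\emph{Main obstacle.} The delicate point is the local lower bound for $u$ in the squeezing argument. The domain $[0,h(t)]$ moves, and $A(x-ct)$ equals $a_1$ only behind $ct$, so the Dirichlet comparison must be set up on $[0,R]\subset[0,\min\{h(t),ct\}]$ for $t\ge T_R$. One then uses that the principal eigenvalue on $[0,R]$ tends to $a_1-c_1\bar v_k-\varepsilon$ as $R\to\infty$. This positivity holds because $R\to\infty$ is allowed and $a_1>c_1a_2/c_2$.
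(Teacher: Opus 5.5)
Your vanishing-case argument is sound, and it is more self-contained than the paper's. The paper notes (\cref{lem:reduction}) that once $ct>h_\infty$ the problem coincides exactly with \eqref{main2}, and then quotes \cite[Theorems 1.1 and 1.2]{MR3986328} both for $u\to0$, $v\to a_2/c_2$ and for $h_\infty\le R^*$. Your Hopf-lemma compactness argument and your eigenvalue argument with $R^*_\varepsilon\to R^*$ prove these facts directly. In the latter you mean an \emph{upper} bound $v\le a_2/c_2+\varepsilon$, not a lower one. The problem is in the spreading case.

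\textbf{The gap: the step $\limsup v\le\bar v_{k+1}$.} You claim $\limsup v\le\bar v_{k+1}=(a_2-b_2\underline u_k)/c_2$ ``by comparison with ODE solutions on the half line''; this step fails.
\begin{itemize}
\item The input $\liminf u\ge\underline u_k$ is only locally uniform, and it has to be, since $u\equiv0$ for $x\ge h(t)$.
\item So the inequality $v_t\le d_2v_{xx}+(a_2-b_2(\underline u_k-\varepsilon)-c_2v)v$ is available only on a bounded interval $[0,R]$, not on $[0,\infty)$.
\item A bound uniform on $[0,\infty)$ is in fact false for $k\ge1$. Far ahead of the front, $v(t,\cdot)$ approaches $a_2/c_2>\bar v_{k+1}$ (compare Step 2 in the proof of \cref{lem:limsup}).
\end{itemize}
The same objection applies, as you wrote it, to $\limsup u\le\bar u_{k+1}$. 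You only claim $\liminf v\ge\underline v_k$ on compacts, whereas an ODE comparison for $u$ needs that bound on all of $[0,h(t)]$.

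\textbf{How to repair it.} For $v$, use the localisation you already use for the lower bound on $u$. Compare on $[0,R]$ with the logistic problem that has Neumann data at $0$ and boundary value $M_2$ at $x=R$. Its positive steady state tends locally uniformly to $(a_2-b_2(\underline u_k-\varepsilon))/c_2$ as $R\to\infty$. This is exactly Step 3 of \cref{th1:lm2}.

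The chain $\bar u_k\to\underline v_k\to\bar u_{k+1}$ can genuinely be run globally: all its inputs are spatially uniform, and $\inf_x v(T,\cdot)>0$. But you must say so and do it. The paper avoids this chain entirely by comparing with \eqref{main2} and invoking \cite[Theorem 1.1]{MR3986328}. Alternatively, \cref{lem:vlow} gives $\limsup u\le u^*$ and $\liminf v\ge v^*$ uniformly in $x$ in one step, via the coupled ODE. After that, only the one-sided iteration for the lower bound of $u$ and the upper bound of $v$ is needed, done on bounded intervals.
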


The next result gives criteria under which each of the two alternatives takes
place. It is the analogue, for the competition system \eqref{main}, of
\cite[Theorem 1.5]{HU20205931} and of \cite[Theorem 1.2]{MR3986328}.

\begin{theorem}\label{th:criteria}
	Assume \eqref{weak_competition} and let $(u,v,h)$ be the solution of
	\eqref{main} with initial functions satisfying \eqref{initial}.
	\begin{enumerate}
		\item[(i)] If $h_0\ge R^*$, then spreading always happens.
		\item[(ii)] Fix $h_0<R_0$ and $v_0$, write $u_0=\sigma\phi$ where $\phi$
		satisfies the conditions imposed on $u_0$ in \eqref{initial} and
		$\sigma>0$, and denote by $(u^\sigma,v^\sigma,h^\sigma)$ the corresponding
		solution. Then there exists
		$\sigma_0=\sigma_0(h_0,\phi,v_0,c)\in(0,+\infty]$ such that vanishing
		happens for $0<\sigma\le\sigma_0$ and spreading happens for
		$\sigma>\sigma_0$.
	\end{enumerate}
\end{theorem}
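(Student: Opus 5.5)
The plan is to deduce both parts from \cref{th:dynamics}, comparison principles, and monotone dependence on the data.

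\emph{Part (i).} By \cref{th:dynamics}, vanishing forces $h_\infty\le R^*$. Since $h$ is strictly increasing (\cref{th6}), we get $h_\infty>h_0\ge R^*$ whenever $h_0\ge R^*$. Hence vanishing is impossible, and spreading must occur.

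\emph{Part (ii).} This is the standard sharp-threshold argument in three steps.

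\textbf{Step 1 (monotonicity in $\sigma$).} I would use the comparison principle for the competitive free boundary system (the order-preserving transformation $(u,-v)$). It gives: if $\sigma_1\le\sigma_2$, then $h^{\sigma_1}\le h^{\sigma_2}$, $u^{\sigma_1}\le u^{\sigma_2}$ and $v^{\sigma_1}\ge v^{\sigma_2}$. Consequently, if spreading happens for $\sigma_1$, it happens for every $\sigma>\sigma_1$. So we may set
\[
\sigma_0:=\sup\{\sigma>0:\ \text{vanishing happens for } u^\sigma\}\in[0,+\infty].
\]

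\textbf{Step 2 ($\sigma_0>0$).} This uses $h_0<R_0$. Since $A\le a_1$ and $v>0$, $u$ is a subsolution of the single-species problem with growth $a_1$ and speed coefficient $\mu(a_1)\ge\mu(A)$; more precisely $(u,h)$ is a lower solution. So it suffices to construct, as in Du--Lin, an upper solution
\[
\bar h(t)=h_0\Bigl(1+\delta-\tfrac{\delta}{2}e^{-\gamma t}\Bigr),\qquad \bar u=M e^{-\gamma t}\cos\Bigl(\tfrac{\pi x}{2\bar h(t)}\Bigr).
\]
Choose $\delta$ small so that $h_0(1+\delta)<R_0$, which makes the principal eigenvalue positive. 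Then choose $\gamma$ small, and finally $M$ small in terms of $\mu(a_1)$ and $\gamma$ so that the Stefan inequality holds. For $\sigma\|\phi\|_\infty\le M\cos(\pi/(2(1+\delta/2)))$ the comparison gives $h^\sigma\le\bar h$, so $h_\infty<\infty$ and vanishing follows by \cref{th:dynamics}. The $v$-component needs no upper bound here, because $c_1v u\ge0$ only helps.

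\textbf{Step 3 (the threshold value itself vanishes, if $\sigma_0<\infty$).} By Step 1, spreading holds for $\sigma>\sigma_0$; what remains is to show vanishing at $\sigma=\sigma_0$. Suppose instead that spreading occurs at $\sigma_0$. Then $h^{\sigma_0}(T)>R^*$ for some $T$. By continuous dependence of $(u^\sigma,v^\sigma,h^\sigma)$ on $\sigma$ over the finite interval $[0,T]$, we get $h^\sigma(T)>R^*$ for $\sigma$ slightly below $\sigma_0$. By the dichotomy in \cref{th:dynamics} (with the bound $h_\infty\le R^*$ for vanishing), this gives spreading for those $\sigma$, contradicting the definition of $\sigma_0$. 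Hence vanishing holds for all $\sigma\le\sigma_0$.

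The main obstacle I expect is the continuous dependence in Step 3 for this coupled system. The difficulties are the unbounded domain of $v$ and the nonlinear Stefan coefficient $\mu(A(h-ct))$. I would handle it by straightening the boundary ($y=x/h(t)$), using the uniform $C^{1+\alpha}$ bounds from the local existence theory, and obtaining continuity in $\sigma$ from uniqueness and compactness, uniformly on $[0,T]$. A secondary point is checking that the comparison principle for lower and upper solutions is valid with a $\mu$ that depends on $h-ct$. This works because $\mu$ is nondecreasing and $A$ is nonincreasing, so $\mu(A(h-ct))$ is nonincreasing in $h$.
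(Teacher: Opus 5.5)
Your proposal is correct and follows the paper's proof essentially step by step: part (i) comes from the bound $h_\infty\le R^*$ in the vanishing alternative, and part (ii) uses monotone dependence on $\sigma$, a Du--Lin upper solution to get $\sigma_0>0$, and continuous dependence on $[0,T]$ combined with $h_\infty\le R^*$ to show vanishing at $\sigma=\sigma_0$. The differences are cosmetic: the paper cites \cite[Lemma 3.8]{MR2607347} and compares through the single-species solution of \eqref{DL} rather than writing the upper solution out, and your concern about monotonicity of $\mu(A(h-ct))$ in $h$ is unnecessary, since at a first touching time the two free boundaries (hence the Stefan coefficients) coincide, while the comparison with \eqref{DL} only uses $\mu(A(\cdot))\le\mu(a_1)$.
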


\begin{remark}\label{rm:criteria}
	As in \cite[Remark 1.6]{HU20205931} we do not know whether
	$\sigma_0=+\infty$ can actually occur; this remains an open problem already
	for the single species model \eqref{single}.
\end{remark}

To state our second main result we recall from \cite[Theorem 1.5]{MR3986328}
(see \cref{th:semi-wave0} below) that, under \eqref{weak_competition}, the
homogeneous problem \eqref{main2} possesses a well-defined asymptotic spreading
speed $c_0>0$, characterised by the semi-wave identity
$-\mu(a_1)\Psi_{c_0}'(0)=c_0$.

\begin{theorem}\label{th:spreading_speed}
	Assume \eqref{weak_competition} and \eqref{initial}. If alternative (i) of
	\cref{th:dynamics} occurs, then
	\[
	\lim_{t\to\infty} \frac{h(t)}{t} = \min\{c, c_0\}.
	\]
\end{theorem}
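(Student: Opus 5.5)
We prove the two inequalities $\limsup_{t\to\infty} h(t)/t\le\min\{c,c_0\}$ and $\liminf_{t\to\infty} h(t)/t\ge\min\{c,c_0\}$ separately, following the strategy of \cite{HU20205931} for \eqref{single} combined with the semi-wave machinery of \cite{MR3609207,MR3986328} for \eqref{main2}.

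\textbf{Upper bound.} Since $A\le a_1$ and $\mu$ is nondecreasing, comparison with the homogeneous problem \eqref{main2} applies. Here $(u,v,h)$ is a lower solution for the $u$-component and an upper solution for the $v$-component, in the usual competitive ordering. This yields $h(t)\le \bar h(t)$, where $\bar h$ is the free boundary of \eqref{main2} with the same data. Since spreading holds for $u$, it holds for \eqref{main2} as well. Then \cite[Theorem 1.5]{MR3986328} (\cref{th:semi-wave0}) gives $\limsup h(t)/t\le c_0$.

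To obtain the bound $c$ when $c<c_0$, we argue as in \cite{HU20205931}. Beyond $x=ct+l_0$ the growth rate of $u$ is $a_0<0$, so $u$ cannot advance faster than the environment. Concretely, we construct an upper solution $\bar u$ for the $u$-equation alone, dropping the $-c_1v$ term, which only helps, on a moving frame $x\in[ct+L, \bar h(t)]$. Its free boundary is $\bar h(t)=ct+L+K$, with $\bar u$ decaying exponentially in the unfavourable zone where $A=a_0<0$. Along the left edge $x=ct+L$ we impose $\bar u\ge$ a uniform bound $M$ of $u$. The exponential profile $Me^{-\lambda(x-ct-L)}$ satisfies $d_1\lambda^2+c\lambda+a_0\le0$ for small $\lambda$ when $|a_0|$ dominates. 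If that is impossible, we instead take $\bar h(t)=(c+\varepsilon)t+C$ and let $\varepsilon\to0$. Either way we conclude $\limsup h(t)/t\le c$.

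\textbf{Lower bound.} This step is the main obstacle, and it is where $\sigma_0^{\max}>0$ in \eqref{v0inf} enters. Fix a small $\varepsilon>0$ and set $s:=\min\{c,c_0\}-\varepsilon$. We build a lower solution of the coupled system in the frame $\xi=x-st$. Behind the front, that is for $x<st$ and hence $x<ct$, we have $A\equiv a_1$, so the problem is genuinely the homogeneous competition problem there.

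We use a perturbed semi-wave $(\Phi,\Psi)$ of \eqref{main2} with speed $s<c_0$, constructed as in \cref{sec_5}. Perturbing the coefficients by $\delta$ produces a semi-wave with speed exactly $s$ that satisfies $-\mu(a_1)\Phi'(0)>s$. The candidate is $\underline u(t,x)=(1-\delta)\Phi(\underline h(t)-x)$ with $\underline h(t)=st+T_0$. It is paired with an upper function $\bar v(t,x)=(1+\delta)\Psi(\underline h(t)-x)$, extended by a large constant ahead of the front. The Stefan inequality holds because $A(\underline h(t)-ct)=a_1$ for large $t$, as $s<c$.

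The difficulty is initialization. We must start this comparison at a large time $T$ such that $u(T,\cdot)$ dominates $\underline u$ on $[0,\underline h(T)]$ and $v(T,\cdot)$ lies below $\bar v$. For this we use \cref{th:dynamics}(i), which gives $(u,v)\to(u^*,v^*)$ locally uniformly. To handle the whole half-line for $v$, we invoke $\sigma_0^{\max}>0$ exactly as in \cite[Section 3]{MR3609207}: ahead of the front, $v$ is bounded above by a solution of the logistic equation, which tends to $a_2/c_2$, and below by a positive constant, and this controls the tail comparison.

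Comparison then gives $h(t)\ge st+T_0$ for $t\ge T$, so $\liminf h(t)/t\ge s$. Letting $\varepsilon\to0$ completes the proof.
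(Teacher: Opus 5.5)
Your strategy is sound. Once the details below are repaired, it proves the theorem by a genuinely different and more elementary route than the paper's.

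\textbf{Comparison with the paper.} The paper splits into three cases:
\begin{itemize}
\item for $c<c_0$, both bounds come from the perturbed shifting-climate semi-waves of \cref{th:semi-wave}, so all of \cref{sec_5} is needed;
\item for $c=c_0$, it compares with the $c_\delta$-problem, using continuity in $c$ and $h_\infty\le R^*$ to guarantee spreading there;
\item for $c>c_0$, it reduces to \eqref{main2} through \cref{lem:reduction}.
\end{itemize}
You instead get $\limsup_{t\to\infty}h(t)/t\le c$ for every $c$ from a scalar barrier in the unfavourable zone $x\ge ct+l_0$, which does not involve $v$ at all. You get $\liminf_{t\to\infty}h(t)/t\ge s$ for every $s<\min\{c,c_0\}$ from a homogeneous semi-wave of speed $s$. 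With $\underline h(t)=s(t-T)+L$ and $L\le cT$ one has $\underline h(t)\le ct$ for $t\ge T$. Hence $A\equiv a_1$ and $\mu(A)\equiv\mu(a_1)$ on the support, and the lower solution is just one for \eqref{main2}. This treats all three cases at once and never needs the semi-waves of \cref{sec_5}.

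What the paper's heavier route buys is sharper information when $c<c_0$. It gives $ct-C\le h(t)\le ct+C$, whereas your lower bound only gives $h(t)\ge(c-\varepsilon)t-C_\varepsilon$. It also produces the profile $(\psi_{L_0},\varphi_{L_0})$, the natural candidate for the limit of $h(t)-ct$ and of the solution.

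\textbf{Repairs needed.}

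(1) A pure exponential never vanishes, so it cannot carry a free boundary, and the Stefan inequality must be checked explicitly. Take $\overline u(t,x)=w(x-ct-L)$ on the strip $ct+L\le x\le\overline h(t)=ct+L+K$, where
\begin{itemize}
\item $w(y)=2M\big(e^{-\lambda y}-e^{-\lambda K}\big)$ with $M\ge\sup u$;
\item $L\ge\max\{l_0,h(T)-cT\}$, so that $u(T,\cdot)=0$ on the strip;
\item $0<\lambda\le c/d_1$;
\item $K$ is so large that $e^{-\lambda K}\le\frac12$ and $2M\lambda\mu(a_0)e^{-\lambda K}\le c$.
\end{itemize}
Then $-cw'-d_1w''\ge0\ge a_0w$, $w(0)\ge M\ge u$ on the left edge, and $c\ge-\mu(a_0)w'(K)$. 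No condition on $|a_0|$ and no fallback are needed. For a pure exponential the relevant condition is $d_1\lambda^2-c\lambda+a_0\le0$, which holds for all small $\lambda>0$ since $a_0<0$.

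(2) Drop the factors $1\mp\delta$. For a semi-wave $(\psi,\varphi)$ of the unperturbed system, the $u$-inequality for $\big((1-\delta)\psi,(1+\delta)\varphi\big)$ reduces to $c_1\varphi\le b_1\psi$. This fails near the front, where $\psi\to0$ while $\varphi>v^*$. With coefficients perturbed by the same $\delta$, the cross terms are not controlled in general either. Use directly the semi-wave of \eqref{main2} with $(a_1,a_2)$ replaced by $(a_1-2\delta,a_2+2\delta)$ at speed $s$, as in \cref{lem:liminf}. The inequality $-\mu(a_1)\psi'(0)>s$ then holds for small $\delta$ because $c_{0,\delta}\to c_0>s$. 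Here $c_{0,\delta}$ is the only zero of the perturbed analogue of $G$ from the proof of \cref{th:semi-wave}(iii), which is positive at $0$.

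(3) The $v$-component of the semi-wave is defined on all of $\mathbb{R}$ and increases to $(a_2+2\delta)/c_2$. No extension by a constant is needed, and a jump would destroy continuity.

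(4) Since $\underline u$ is decreasing, $\underline u_x(t,0)<0$, so you must use the second alternative of \cref{th7}(b). That is, you need $\underline u(t,0)\le u(t,0)$ and $\overline v(t,0)\ge v(t,0)$ for all $t\ge T$, not just at $t=T$; this follows from \eqref{eq:spread_conv}.

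(5) The lower bound only needs $v\le V(t)\to a_2/c_2$ ahead of the front, not $\sigma_0^{\max}>0$. That hypothesis enters your proof only through the $c_0$ upper bound, that is, through \cite[Theorem 1.3]{MR3986328}. That is the spreading-speed theorem, not Theorem 1.5, which you cited.
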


Let $c_0^{\rm sing}$ denote the asymptotic spreading speed of the invader in a
globally favourable environment \emph{in the absence of the native species},
that is, the speed at which the free boundary of
\begin{equation}\label{single_hom}
	\begin{cases}
		\hat u_t = d_1\hat u_{xx}+(a_1-b_1\hat u)\hat u, & t>0,\ 0<x<\hat h(t),\\
		\hat u_x(t,0)=\hat u(t,\hat h(t))=0,\quad \hat h'(t)=-\mu(a_1)\hat u_x(t,\hat h(t)), & t>0,\\
		\hat h(0)=h_0,\quad \hat u(0,\cdot)=u_0
	\end{cases}
\end{equation}
advances when spreading occurs; see \cite{MR2607347,MR4404216}.

\begin{lemma}\label{lm:c0sing}
	Assume \eqref{weak_competition}. Then $c_0\le c_0^{\rm sing}$.
\end{lemma}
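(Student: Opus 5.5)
We compare the semi-wave characterisations of the two speeds. By \cite{MR2607347,MR4404216}, $c_0^{\rm sing}$ is the unique $s>0$ with $-\mu(a_1)q_s'(0)=s$, where $q_s$ is the unique positive solution of
\[
d_1q''-sq'+(a_1-b_1q)q=0\ \text{ in }(0,\infty),\qquad q(0)=0,\quad q(\infty)=a_1/b_1 .
\]
Moreover $s\mapsto q_s'(0)$ is continuous and strictly decreasing on $[0,2\sqrt{a_1d_1})$. Hence $s\mapsto -\mu(a_1)q_s'(0)-s$ is strictly decreasing and vanishes only at $c_0^{\rm sing}$. It therefore suffices to prove
\[
\Psi_{c_0}'(0)\le q_{c_0}'(0).
\]
This gives $c_0=-\mu(a_1)\Psi_{c_0}'(0)\le -\mu(a_1)q_{c_0}'(0)$. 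If we had $c_0>c_0^{\rm sing}$, monotonicity would give $-\mu(a_1)q_{c_0}'(0)<c_0$, a contradiction. (If $c_0\ge 2\sqrt{a_1d_1}$ the claim is trivial once we check $c_0<2\sqrt{a_1d_1}$, which is part of \cref{th:semi-wave0}.)

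For the comparison, let $(\Psi,\Phi)=(\Psi_{c_0},\Phi_{c_0})$ be the semi-wave pair from \cref{th:semi-wave0}. We use that it satisfies
\[
d_1\Psi''-c_0\Psi'+\Psi(a_1-b_1\Psi-c_1\Phi)=0,\qquad \Psi(0)=0,\quad \Psi>0,\quad \Phi>0,
\]
with $(\Psi,\Phi)(\infty)=(u^*,v^*)$ and $u^*<a_1/b_1$. Since $\Phi>0$, $\Psi$ is a lower solution of the single-species semi-wave equation:
\[
d_1\Psi''-c_0\Psi'+\Psi(a_1-b_1\Psi)\ge 0 .
\]
Set $q:=q_{c_0}$, which increases from $0$ to $a_1/b_1$. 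We aim to show $\Psi\le q$ on $[0,\infty)$. Together with $\Psi(0)=q(0)=0$, this yields $\Psi'(0)\le q'(0)$.

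We prove $\Psi\le q$ by a sliding argument. For $\ell\ge0$ put $q^\ell(\xi):=q(\xi-\ell)$ for $\xi\ge\ell$.
\begin{itemize}
\item For large $\ell$ we claim $\Psi<q^\ell$ on $[\ell,\infty)$. Near $\xi=\ell$ this holds because $q^\ell(\ell)=0$ is compensated by $\Psi$ being uniformly bounded away from its limit only in a compact region; more precisely, use $\sup\Psi<a_1/b_1$ (if $\sup\Psi\ge a_1/b_1$ at an interior maximum, the inequality $d_1\Psi''+\Psi(a_1-b_1\Psi)\ge0$ gives a contradiction via the strong maximum principle, since $\Psi\not\equiv a_1/b_1$). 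Because $q\to a_1/b_1$, the claim follows for large $\ell$, except in a neighbourhood of $\xi=\ell$. A cleaner route there is to compare $\Psi$ with $\max\{\Psi\}$-type constants and run the standard sliding argument with the difference.
\item Next decrease $\ell$ to the infimal admissible value $\ell^*$ and suppose $\ell^*>0$. Then $\Psi\le q^{\ell^*}$ with contact either at an interior point or at infinity. Interior contact is excluded by the strong maximum principle, because $q^{\ell^*}$ solves the equation while $\Psi$ is a subsolution and $\Psi(\ell^*)>0=q^{\ell^*}(\ell^*)$. Contact at infinity is excluded because $\lim\Psi=u^*<a_1/b_1=\lim q$. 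Hence $\ell^*=0$.
\end{itemize}

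The main obstacle is the start of the sliding: handling the region near the left endpoint $\xi=\ell$, where $q^\ell$ vanishes but $\Psi>0$. If this step fails, the fallback is a phase-plane argument. Write $\Psi'=P(\Psi)$ and $q'=Q(q)$ as functions of the height; this is possible if $\Psi$ is monotone, which I expect \cref{th:semi-wave0} to provide. The ODEs become
\[
d_1PP'=c_0P-\Psi(a_1-b_1\Psi-c_1\Phi),\qquad d_1QQ'=c_0Q-q(a_1-b_1q).
\]
The extra $+c_1\Phi\Psi\ge0$ term makes $P$ a supersolution-type comparison. Integrating backward from the level $u^*$, where $P=0<Q(u^*)$, we obtain $P\le Q$ on $[0,u^*]$. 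In particular $\Psi'(0)=P(0)\le Q(0)=q'(0)$.
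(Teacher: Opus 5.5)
Your strategy of comparing the semi-wave profiles at the ODE level, rather than the free boundaries, differs from the paper's and can be made rigorous. However, the sliding argument you lead with does not work as set up.

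\textbf{The sliding step.} With $q^\ell(\xi)=q(\xi-\ell)$ you have $q^\ell(\ell)=0<\Psi(\ell)$. So $\Psi\le q^\ell$ fails at $\xi=\ell$ for every $\ell>0$, and on all of $[0,\ell]$ if $q^\ell$ is extended by zero. This is not a difficulty at the start of the sliding; it means you are sliding in the wrong direction. Your second bullet in fact asserts both $\Psi\le q^{\ell^*}$ and $\Psi(\ell^*)>q^{\ell^*}(\ell^*)$.

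Shift $q$ to the left instead: $q_\ell(\xi):=q(\xi+\ell)$ for $\xi\ge0$. Since $q$ increases to $a_1/b_1>u^*\ge\Psi$, you get $q_\ell>\Psi$ for $\ell$ large. Suppose the infimal admissible $\ell^*$ were positive. Then $w:=q_{\ell^*}-\Psi\ge0$ satisfies $d_1w''-c_0w'+\gamma w=-c_1\Phi\Psi\le0$ with $\gamma$ bounded, $w(0)=q(\ell^*)>0$ and $w(\infty)=a_1/b_1-u^*>0$. The strong maximum principle gives $\inf w>0$, so $\ell^*$ could be decreased. Hence $\Psi\le q$, and Hopf's lemma at $\xi=0$ even gives $\Psi'(0)<q'(0)$.

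Your phase-plane fallback is already a correct proof. Coming down from $u^*$, where $P=0<Q$, suppose $P$ and $Q$ first meet at some $s_1\in(0,u^*)$. Then $d_1P(s_1)\,(P-Q)'(s_1)=c_1\Phi s_1>0$, which is incompatible with $P<Q$ on $(s_1,u^*]$.

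\textbf{Two further repairs.} First, after reflecting to $\xi=-x$ both profiles increase from $0$. The characterisations therefore read $\mu(a_1)q_s'(0)=s$ and $c_0=\mu(a_1)\Psi'(0)$, with $\Psi'(0)>0$. Your displayed chain multiplies $\Psi'(0)\le q'(0)$ by $-\mu(a_1)$ without reversing the inequality, mixing the two orientations.

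Second, the case $c_0\ge2\sqrt{a_1d_1}$ is not trivial: the lemma would be false there. Nor is it excluded by \cref{th:semi-wave0} as quoted, which only gives $c_0<c^*$ together with a \emph{lower} bound for $c^*$. You can exclude it with your own comparison. For such $c_0$, the KPP travelling wave of speed $c_0$ gives a phase-plane curve $Q$ with $Q(0^+)=0$. The same crossing argument then yields $P(0)\le0$, contradicting $\Psi'(0)>0$.

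\textbf{Comparison with the paper.} The paper argues at the PDE level. Since $v>0$, the solution of \eqref{main2} is a lower solution of \eqref{single_hom}, so the free boundaries are ordered. The inequality then follows by dividing by $t$ and invoking the two known spreading-speed theorems. That proof is three lines, but it uses both asymptotic results as black boxes and yields only $\le$.

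Your argument stays with the semi-wave ODEs. It needs only the semi-wave characterisations of the two speeds and the monotonicity of $s\mapsto q_s'(0)$. It also gives more: the pointwise ordering $\Psi_{c_0}\le q_{c_0}$ and, via Hopf's lemma, the strict inequality $c_0<c_0^{\rm sing}$, which \cref{rm:speed} only asserts ``in general''.
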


\begin{proof}
	Let $(\tilde u,\tilde v,\tilde h)$ solve \eqref{main2} and let
	$(\hat u,\hat h)$ solve \eqref{single_hom} with the same $u_0$ and $h_0$.
	Since $\tilde v>0$ we have
	$\tilde u_t\le d_1\tilde u_{xx}+(a_1-b_1\tilde u)\tilde u$, while
	$\tilde u_x(t,0)=0$ and $\tilde h'(t)=-\mu(a_1)\tilde u_x(t,\tilde h(t))$;
	thus $(\tilde u,\tilde h)$ is a lower solution of \eqref{single_hom} and the
	scalar comparison principle gives $\tilde h\le\hat h$ for all $t>0$. If
	spreading occurs for \eqref{main2} then $\hat h_\infty=+\infty$ as well;
	dividing by $t$ and letting $t\to\infty$ gives $c_0\le c_0^{\rm sing}$.
\end{proof}

\begin{remark}\label{rm:speed}
	The formula $\min\{c,c_0\}$ has a transparent ecological meaning. The number
	$c_0$ is the speed at which the invader would advance in a globally
	favourable environment while competing with the native species, whereas $c$
	is the speed at which the favourable habitat itself moves. If $c_0\le c$ the
	invasion front travels at its own intrinsic speed and does not feel the
	climate constraint; if $c_0>c$ the front cannot outrun the moving edge of the
	favourable region and is slowed down to the speed $c$ of the climate shift.
	Note moreover that, by \cref{lm:c0sing}, $c_0\le c_0^{\rm sing}$, and that the
	inequality is in general strict: the native species depresses the invader
	precisely at its leading edge, where $v$ is close to its carrying capacity
	$a_2/c_2$. For the parameters used in \cref{sec_7} one finds
	$c_0\approx0.0128$ against $c_0^{\rm sing}\approx0.040$, a factor of more than
	three. Hence in the present model the invasion is retarded by two independent
	mechanisms, interspecific competition and the pace of climate change, and
	it is the more restrictive of the two that determines the observed speed.
\end{remark}

\begin{remark}\label{rm:compare}
	It is instructive to compare \cref{th:dynamics} with the corresponding result
	of \cite{DLN2026a} for a superior invader. There, when spreading occurs, the
	native species is driven to extinction and $(u,v)\to(a_1/b_1,0)$, whereas here
	the two species coexist and $(u,v)\to(u^*,v^*)$. This difference is not
	merely cosmetic: it changes the structure of the semi-waves that govern the
	front. In the superior case the $v$-component of the semi-wave vanishes at
	$-\infty$, and the construction of a lower solution requires a delicate
	modification of the profile far behind the front (see
	\cite[Lemma 3.3]{MR3609207} and the corresponding lemma in \cite{DLN2026a}). In
	the weak competition case the $v$-component tends to $v^*>0$ at $-\infty$,
	so no such modification is needed; on the other hand the state at $-\infty$ is
	now a genuine two-species coexistence equilibrium, and the asymptotic analysis
	of the semi-wave near $-\infty$ involves the full four-dimensional linearised
	system rather than a scalar one (see \cref{lm:asym}). A further difference
	appears in the vanishing case: since $u$ no longer eliminates $v$, the limit
	of $v$ is its own carrying capacity $a_2/c_2$ and not $0$.
\end{remark}

\begin{remark}\label{rm:hinfty}
	The bound $h_\infty\le R^*$ in \cref{th:dynamics}(ii) is sharper than the mere
	finiteness of $h_\infty$ and is what makes \cref{th:criteria} and the
	borderline case $c=c_0$ of \cref{th:spreading_speed} accessible; see
	\cref{lm:eq}. It is obtained from the following simple but useful observation
	(\cref{lem:reduction}): if $h_\infty<+\infty$, then $h(t)<ct$ for all large
	$t$, so that after a finite time the shifting environment is no longer felt
	and \eqref{main} reduces \emph{exactly} to the homogeneous system
	\eqref{main2}. The same observation is the key to the case $c>c_0$.
\end{remark}

\textbf{Organisation of the paper.} \Cref{sec_2} collects the basic results on
\eqref{main} (existence, uniqueness, global bounds) and proves, in full, the
comparison principle in the form used later; the details of the local existence
theory, which follow \cite[Theorem 2.1]{MR3327894}, are deferred to
\cref{sec_app}. \Cref{sec_3} contains the proof
of \cref{th:dynamics}, and \cref{sec_4} that of \cref{th:criteria}. \Cref{sec_5}
is devoted to the semi-waves associated with the shifting environment; this is
the technical core of the paper. \Cref{sec_6} proves \cref{th:spreading_speed}.
\Cref{sec_7} presents numerical simulations, and \cref{sec_8} concludes with some
comments and open questions.

\section{Preliminary results}\label{sec_2}

\subsection{Existence, uniqueness and global bounds}

\begin{theorem}\label{th4}
	For any given $\alpha \in (0, 1)$ and any $(u_0, v_0)$ satisfying
	\eqref{initial}, there exists a $T > 0$ such that problem \eqref{main} admits
	a unique bounded solution
	$$
	(u, v, h) \in C^{(1+\alpha)/2, 1 + \alpha}(D_T) \times C^{(1+\alpha)/2, 1 + \alpha}(D_T^{\infty}) \times C^{1 + \alpha/2}([0,T]).
	$$
	Moreover,
	$$
	\|u\|_{C^{(1+\alpha)/2, 1 + \alpha}(D_T)} + \|v\|_{C^{(1+\alpha)/2, 1 + \alpha}(D_T^{\infty})} + \|h\|_{C^{1 + \alpha/2}([0,T])} \leq C,
	$$
	where $D_T = \{(t,x) \in \mathbb{R}^2: t \in [0,T], x \in [0, h(t)]\}$,
	$D_T^{\infty} = \{(t,x)\in \mathbb{R}^2: t \in [0,T], x \in [0, +\infty)\}$, and $C$
	and $T$ depend only on $\alpha, h_0, c, \|u_0\|_{C^2([0,h_0])},
	\|v_0\|_{C^2([0,\infty))}$, on $a_0,a_1,a_2,b_1,b_2,c_1,c_2,d_1,d_2$ and on
	the Lipschitz constants of $A$ and $\mu$. Here $C^{(1+\alpha)/2,1+\alpha}$
	denotes the parabolic H\"older space of functions that are H\"older continuous
	with exponent $\frac{1+\alpha}{2}$ in $t$ and $1+\alpha$ in $x$, in the sense
	of \cite[Chapter IV]{MR241821}.
\end{theorem}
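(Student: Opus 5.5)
We follow the scheme of \cite[Theorem 2.1]{MR3327894} (itself based on Du--Lin \cite{MR2607347} and Chen--Friedman), combining a change of variables that straightens the free boundary with a Schauder fixed point argument, and then Gronwall-type estimates for uniqueness. The only new ingredient compared with \cite{MR3327894} is the dependence of the reaction term and of the Stefan coefficient on $h(t)-ct$. Since $A$ and $\mu$ are Lipschitz, this dependence is harmless: it contributes only terms controlled by $|h_1-h_2|$.

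\textbf{Straightening.} Pick a cutoff $\zeta\in C^3(\mathbb{R})$ with $\zeta(y)=1$ for $|y-h_0|\le h_0/4$, $\zeta(y)=0$ for $|y-h_0|\ge h_0/2$ and $|\zeta'|\le 6/h_0$. Set
\[
x=y+\zeta(y)\bigl(h(t)-h_0\bigr).
\]
As long as $|h(t)-h_0|\le h_0/8$, this is a diffeomorphism of $[0,\infty)$ that fixes a neighbourhood of $0$ and sends $y=h_0$ to $x=h(t)$. Writing $w(t,y)=u(t,x)$, the $u$-equation becomes a uniformly parabolic equation on the fixed domain $[0,h_0]$. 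Its coefficients depend on $h,h'$, and its zeroth-order term contains $A(x-ct)$ and $v(t,x)$. The $v$-equation is left in the original variables on $[0,\infty)$.

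\textbf{Fixed point.} For small $T$, let $\mathcal{X}_T$ be the set of triples $(w,v,h)$ in a closed, bounded, convex subset of $C(\overline{\Delta_T})\times C(D_T^\infty)\times C^1([0,T])$, determined by the initial data and by bounds of size $1$ around them, where $\Delta_T=[0,T]\times[0,h_0]$. Given such a triple:
\begin{itemize}
\item solve the linear problem for $\tilde w$, with $v$ and $h$ frozen in the coefficients;
\item solve the linear problem for $\tilde v$ on the half line, with $u$ frozen;
\item define $\tilde h(t)=h_0-\int_0^t\mu\bigl(A(h(s)-cs)\bigr)\,w_y(s,h_0)\,ds$.
\end{itemize}
The $L^p$ estimates and Sobolev embedding give H\"older bounds $C^{(1+\alpha)/2,1+\alpha}$ for $\tilde w$ and $\tilde v$; on the unbounded domain we use local estimates, which are uniform thanks to $v_0\in C^2\cap L^\infty$. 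Hence $\tilde h\in C^{1+\alpha/2}$. Because these bounds come with positive H\"older exponents, the map sends $\mathcal{X}_T$ into itself for small $T$ and is compact. Schauder's theorem then gives a solution, and the Hölder bounds give the estimate stated in \cref{th4}, with $C$ and $T$ depending only on the listed quantities. The Lipschitz constants of $A$ and $\mu$ enter only through the regularity of the coefficient $\mu(A(h-ct))$ and of $A(x-ct)$.

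\textbf{Uniqueness.} Take two solutions, straighten both, and estimate $W=w_1-w_2$, $V=v_1-v_2$ and $H=h_1-h_2$. The $L^p$ theory gives $\|W\|_{C^{(1+\alpha)/2,1+\alpha}}\le C(\|H\|_{C^1}+\|V\|_\infty)$. The maximum principle on the half line gives $\|V\|_\infty\le CT\,(\|W\|_\infty+\|H\|_{C^1})$, using that the $u$'s vanish beyond $h_i$ and are Lipschitz. For the boundary term,
\[
|\mu(A(h_1-cs))-\mu(A(h_2-cs))|\le L|H|.
\]
Combining these, $\|H\|_{C^1([0,T])}\le CT^{\alpha/2}\|H\|_{C^1([0,T])}$, so $H\equiv0$ for small $T$, and then $W\equiv V\equiv0$. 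Boundedness of the solution follows from the maximum principle: $0<u\le\max\{\|u_0\|_\infty,a_1/b_1\}$ and $0<v\le\max\{\|v_0\|_\infty,a_2/c_2\}$.

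\textbf{Main obstacle.} The delicate step is the coupling between the bounded moving domain for $u$ and the unbounded domain for $v$. Obtaining uniform H\"older estimates for $v$ on $D_T^\infty$ requires covering $[0,\infty)$ by unit intervals and applying interior and boundary estimates with constants independent of the interval, while $u$ extended by zero is only Lipschitz across $x=h(t)$. Since the $v$-equation only sees $u$ through a bounded zeroth-order coefficient, $L^\infty$ control of $u$ suffices there. The full details of this step are what we postpone to \cref{sec_app}.
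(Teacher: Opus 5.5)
There is a genuine gap in your existence step. Schauder's theorem needs the map to send $\mathcal X_T$ into a relatively compact set, and you derive compactness from the uniform $C^{(1+\alpha)/2,1+\alpha}$ bounds. That works for the $w$- and $h$-components, which live on the bounded sets $\Delta_T$ and $[0,T]$. It fails for the $v$-component: a bounded subset of $C^{(1+\alpha)/2,1+\alpha}(D_T^\infty)$ is not relatively compact in $C(D_T^\infty)$ with the sup norm, because Arzel\`a--Ascoli breaks down on the unbounded interval $[0,\infty)$. For instance, the translates $\chi(x-n)$ of a fixed bump $\chi$ have no sup-norm convergent subsequence.

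This is not a harmless technicality in your setup. If $\tilde v$ really solves a \emph{linear} problem, then the term $(a_2-b_2u-c_2v)v$ must be frozen in $v$ as well, so $\tilde v$ depends on the frozen $v$ on the whole half line. Perturbing the frozen $v$ by small bumps centred at $x=n$ then produces images $\tilde v_n$ that differ, near $x=n$, by an amount independent of $n$. So the image of $\mathcal X_T$ is genuinely not precompact. If instead you freeze only $u$, the $v$-problem is semilinear rather than linear. Compactness then needs an extra argument, for example a uniform Gaussian-tail bound for $\tilde v[u]-\tilde v[0]$, whose source is supported in $x\le\tfrac{9}{8}h_0$, and you do not give one. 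The obstacle you single out at the end (uniform H\"older bounds on the half line) is real but routine; compactness is the actual difficulty for your method.

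The paper avoids this by using the contraction mapping theorem on $\Gamma_T=W_T\times Z_T\times H_T$. The metric is built from $\|w_1-w_2\|_{C(\Delta_T)}$, $\|z_1-z_2\|_{L^\infty(\Delta_T^\infty)}$ and $\|h_1'-h_2'\|_{C([0,T])}$. Banach's theorem needs only completeness and Lipschitz difference estimates in $L^\infty$, both available on the half line, and it gives existence and uniqueness at once.

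Your uniqueness step already contains essentially the estimates \eqref{eq:w12}--\eqref{eq:hp12}:
\begin{itemize}
\item control of $W$ by $H$ and $V$;
\item control of $V$ by $T(\|W\|_\infty+\|H\|)$ via the maximum principle;
\item control of $H'$ via the Lipschitz continuity of $\mu\circ A$ and the factor $T^{\alpha/2}$ from $W_y(0,\cdot)=0$.
\end{itemize}
Applying them to the map $\mathcal F$ itself, rather than only to two solutions, repairs the proof and makes the separate uniqueness argument unnecessary. In these estimates you should also keep $\|W\|$ on the right-hand side, coming from the reaction terms, and absorb it for small $T$.

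Alternatively, you could remove $v$ from the fixed point: solve the $v$-equation exactly for each given $u$ and run Schauder only in $(w,h)$. This works because the $u$-equation sees $v$ only on $[0,\tfrac{9}{8}h_0]$. The required continuity of $u\mapsto v$ there follows from the maximum principle, but you have to state and prove it.
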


The proof is obtained by straightening the free boundary and applying the
contraction mapping theorem, exactly as in \cite[Theorem 2.1]{MR3327894}; the
only modification is the presence of the Lipschitz functions $A(\cdot-ct)$ and
$\mu(A(\cdot))$, which affects the constants but not the structure of the
argument. For the reader's convenience the details are given in \cref{sec_app}.

\begin{theorem}\label{th6}
	Problem \eqref{main} admits a unique solution $(u,v,h)$, defined for all
	$t>0$, and there exist positive constants $M_1,M_2,M_3$ such that
	\[
	0 < u(t,x) \leq M_1 \ \text{ for } (t, x) \in (0, +\infty) \times [0, h(t)),\qquad
	0 < v(t,x) \leq M_2 \ \text{ for } (t, x) \in (0, +\infty) \times [0, +\infty),
	\]
	\[
	0 < h'(t) \leq M_3 \ \text{ for } t \in (0,+\infty).
	\]
	Moreover $M_1=\max\{\|u_0\|_\infty,a_1/b_1\}$ and
	$M_2=\max\{\|v_0\|_\infty,a_2/c_2\}$ may be taken, and \eqref{main} has no
	unbounded solution.
\end{theorem}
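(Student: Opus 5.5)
The plan is the standard continuation argument: a priori bounds on any existence interval, then extension of the local solution of \cref{th4}. Let $(u,v,h)$ be the solution on a maximal interval $[0,T_{\max})$. First, positivity: $u$ satisfies a linear equation $u_t=d_1u_{xx}+q(t,x)u$ with bounded $q=A(x-ct)-b_1u-c_1v$, Neumann condition at $x=0$ and $u=0$ at $x=h(t)$. The strong maximum principle then gives $u>0$ in $[0,h(t))$, and the Hopf lemma at $x=h(t)$ gives $u_x(t,h(t))<0$. Since $\mu(A(\cdot))\ge\mu(a_0)>0$ by \eqref{mu}, we get $h'(t)>0$. In the same way $v$ solves a linear equation with bounded coefficient, so $v>0$. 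Here one uses a Phragm\'en--Lindel\"of-type maximum principle on the half line, valid for bounded solutions.

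Next, the upper bounds. Since $A\le a_1$ and $v>0$, we have $u_t\le d_1u_{xx}+(a_1-b_1u)u$. Comparison with the ODE solution $\bar u(t)$ of $\bar u'=(a_1-b_1\bar u)\bar u$, $\bar u(0)=\|u_0\|_\infty$, gives $u\le M_1=\max\{\|u_0\|_\infty,a_1/b_1\}$. Similarly $u\ge0$ gives $v_t\le d_2v_{xx}+(a_2-c_2v)v$, and comparison on the half line with the ODE for $v$ yields $v\le M_2$. For $h'$ we follow Du--Lin and build the barrier
\[
w(t,x)=M_1\bigl[2K(h(t)-x)-K^2(h(t)-x)^2\bigr]\quad\text{on } \{h(t)-K^{-1}\le x\le h(t)\}.
\]
We choose $K=\max\{\sqrt{a_1/(2d_1)},\,4\|u_0\|_{C^1}/(3M_1)\}$. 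Using $h'>0$, $u\le M_1$ and the bound $(A-b_1u-c_1v)u\le a_1M_1$, one checks that $w_t-d_1w_{xx}\ge a_1M_1$ in this region. Moreover $w\ge u$ on the parabolic boundary and at $t=0$. Comparison gives $u\le w$, hence $u_x(t,h(t))\ge w_x(t,h(t))=-2KM_1$. Therefore $h'(t)\le 2\mu(a_1)KM_1=:M_3$. All these constants are independent of $T_{\max}$.

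Finally, extension. Suppose $T_{\max}<\infty$. The bounds above, together with parabolic $L^p$ and Schauder estimates on the straightened domain $y=x/h(t)$, give bounds on $\|u(t,\cdot)\|_{C^2}$, $\|v(t,\cdot)\|_{C^2}$ and $h$ for $t\in[\epsilon,T_{\max})$ that do not depend on $T_{\max}$. These estimates use that $h\in[h_0,h_0+M_3T_{\max}]$ and that $A$ and $\mu$ are Lipschitz. Since the local existence time $T$ of \cref{th4} depends only on these norms, we can restart from $t=T_{\max}-\tau$ for small $\tau$ and extend past $T_{\max}$, a contradiction. Uniqueness follows from the local uniqueness in \cref{th4}, and boundedness is the statement that no unbounded solution exists.

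The main obstacle is making the restart legitimate. The data at time $T_{\max}-\tau$ must satisfy \eqref{initial} with uniform $C^2$ norms. In particular $v(t,\cdot)$ must be uniformly bounded in $C^2([0,\infty))$ on the unbounded domain, which requires interior and boundary Schauder estimates uniform in $x$. The compatibility conditions must also hold, which is automatic since the solution is classical for $t>0$.
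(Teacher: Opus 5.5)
Your a priori bounds, the Du--Lin barrier for $h'$ and the continuation argument are the same as in the paper. Your threshold $K\ge\sqrt{a_1/(2d_1)}$ is slightly more economical than the paper's $\sqrt{(a_1+c_1M_2)/(2d_1)}$, and both work since $(A-b_1u-c_1v)u\le a_1M_1$.

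The gap is the final clause of the statement. You write that uniqueness follows from \cref{th4} and that ``boundedness is the statement that no unbounded solution exists''. However, \cref{th4} asserts uniqueness only among \emph{bounded} solutions, and every estimate you derive concerns the solution that \cref{th4} produces. The positivity of $v$ and the comparison of $v$ with the logistic ODE on $[0,\infty)$ both rest on a Phragm\'en--Lindel\"of maximum principle which, as you note yourself, requires boundedness; without a growth condition it genuinely fails on unbounded domains (Tychonoff's example). So your argument does not exclude a second classical solution with the same data whose $v$-component is unbounded in $x$. It therefore proves neither ``no unbounded solution'' nor uniqueness among all solutions. For $u$ there is no issue, since it is continuous on the compact set $\overline{D_T}$.

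What is missing is a \emph{localized} a priori bound that uses the logistic damping rather than any behaviour at infinity. The paper obtains it from the boundary blow-up solution $U$ of $-d_2U''=U(a_2-c_2U)$ in $(-1,1)$ with $U(\pm1)=+\infty$. Since $U\ge a_2/c_2$, the map $s\mapsto s(a_2-c_2s)$ is decreasing on $[U,\infty)$. Hence $\|v_0\|_\infty+U(\cdot-x_0)$ is a supersolution on $(x_0-1,x_0+1)$ that is infinite on the lateral boundary. The ordinary maximum principle on this bounded interval then gives $v(t,x_0)\le\|v_0\|_\infty+U(0)$ for every $x_0$ and $t$, for any solution whatsoever; the same is done for $u$. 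Once every solution is known to be bounded, the uniqueness of \cref{th4} covers all solutions, and the rest of your argument goes through unchanged.
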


\begin{proof}
	Let $[0,T_{\max})$ be the maximal interval of existence given by \cref{th4}.
	Since $A(\cdot)\le a_1$, the function $u$ satisfies
	$u_t-d_1u_{xx}\le u(a_1-b_1u)$ in $\{0<x<h(t)\}$ together with
	$u_x(t,0)=0$ and $u(t,h(t))=0$; comparing $u$ with the solution
	$\overline{u}(t)$ of the logistic equation
	\[
	\frac{{\rm d}\overline{u}}{{\rm d}t} = \overline{u}(a_1 - b_1\overline{u}),
	\qquad \overline{u}(0)=\|u_0\|_\infty,
	\qquad\text{i.e.}\qquad
	\overline{u}(t)=\frac{a_1e^{a_1t}}{b_1\left(e^{a_1t}-1+\frac{a_1}{b_1\|u_0\|_\infty}\right)},
	\]
	we obtain $u\le \sup_{t\ge0}\overline{u}(t)=M_1$. Similarly, $v$ satisfies
	$v_t-d_2v_{xx}\le v(a_2-c_2v)$ on $[0,\infty)$, whence $v\le M_2$. The strong
	maximum principle and the Hopf boundary lemma give $u>0$ in
	$\{0\le x<h(t)\}$, $u_x(t,h(t))<0$ and hence $h'(t)>0$, and also $v>0$.

	To bound $h'$ from above, fix $M\ge\max\{\sqrt{(a_1+c_1M_2)/(2d_1)},\,
	4\|u_0\|_{C^1([0,h_0])}/(3M_1)\}$ and set
	$\Omega:=\{(t,x):0<t<T_{\max},\,h(t)-M^{-1}<x<h(t)\}$ and
	$\overline{U}(t,x):=M_1[2M(h(t)-x)-M^2(h(t)-x)^2]$. A direct computation using
	$h'>0$ shows that $\overline{U}_t-d_1\overline{U}_{xx}\ge2d_1M_1M^2\ge
	(A(x-ct)-b_1u-c_1v)u$ in $\Omega$, while
	$\overline{U}(t,h(t)-M^{-1})=M_1\ge u$, $\overline{U}(t,h(t))=0=u(t,h(t))$ and
	$\overline{U}(0,\cdot)\ge u_0$ on $[h_0-M^{-1},h_0]$. The maximum principle
	yields $u\le\overline{U}$ in $\Omega$, hence
	$u_x(t,h(t))\ge\overline{U}_x(t,h(t))=-2MM_1$ and
	\[
	0<h'(t)=-\mu(A(h(t)-ct))u_x(t,h(t))\le 2MM_1\mu(a_1)=:M_3 .
	\]
	Since $M_1,M_2,M_3$ do not depend on $T_{\max}$, standard parabolic
	regularity and \cref{th4} allow the solution to be continued beyond
	$T_{\max}$ unless $T_{\max}=\infty$; therefore $T_{\max}=\infty$. Finally,
	comparing $u$ (extended by $0$ for $x>h(t)$) with $\|u_0\|_\infty+U(\cdot-x_0)$,
	where $U$ is the unique boundary blow-up solution of $-d_1U''=U(a_1-b_1U)$ in
	$(-1,1)$ with $U=+\infty$ on $\{\pm1\}$ (see \cite{DU_MA_2001}), and similarly
	for $v$, shows that every solution of \eqref{main} is automatically bounded.
\end{proof}

\subsection{Comparison principle}

The following comparison principle is the two-species analogue of
\cite[Lemma 2.6]{MR3327894}; note the \emph{interleaved} structure, typical of
competitive systems: an upper solution for $u$ is paired with a lower solution
for $v$, and vice versa. The two alternative conditions at $x=0$ are both needed:
the first is used for the upper solutions of \cref{sec_6}, the second for the
lower solutions, whose $u$-component is decreasing in $x$ and therefore does not
satisfy a one-sided Neumann condition at the origin.

\begin{theorem}\label{th7}
	Let $T \in (0,+\infty)$, and let
	\begin{align*}
		&\underline{h}, \overline{h} \in C^1([0,T]),\quad \underline h(0)>0,\\
		&\underline{u} \in C(\overline{U_T}) \cap C^{1,2}(U_T) \text{ with } U_T := \{ (t,x) \in \mathbb{R}^2 : t \in (0, T], x \in (0, \underline{h}(t))\},\\
		&\overline{u} \in C(\overline{V_T}) \cap C^{1,2}(V_T) \text{ with } V_T := \{ (t,x) \in \mathbb{R}^2 : t \in (0, T], x \in (0, \overline{h}(t))\},\\
		&\underline{v}, \overline{v} \in (L^{\infty} \cap C)([0,T] \times [0,\infty)) \cap C^{1,2}((0,T] \times [0,\infty)),\\
		&\underline{v}, \overline{v}, \underline{u}, \overline{u} \ge 0 .
	\end{align*}
	Let $(u,v,h)$ be the unique solution of \eqref{main}.

	\emph{(a)} Assume that
	\[
	\begin{cases}
		\overline{u}_t \ge d_1\overline{u}_{xx} + (A(x - ct) - b_1\overline{u} - c_1 \underline{v}) \overline{u}, & 0 < t \le T,~ 0 < x < \overline{h}(t), \\
		\underline{v}_t \le d_2\underline{v}_{xx} + (a_2 - b_2\overline{u} - c_2\underline{v})\underline{v}, & 0 < t \le T,~ 0 < x < +\infty, \\
		\overline{u}(t, x) = 0, & 0 < t \le T, ~ \overline{h}(t) \leq x < +\infty, \\
		\overline{h}'(t) \ge -\mu (A(\overline{h}(t) - ct))\overline{u}_x(t, \overline{h}(t)), & 0 < t \le T, \\
		\overline{h}(0) \ge h_0,~ \overline{u}(0, x) \ge u_0(x), & 0 \leq x \leq h_0, \\
		\underline{v}(0, x) \le v_0(x), & 0 \le x < +\infty,
	\end{cases}
	\]
	and that, in addition, either
	\[
	\overline{u}_x(t, 0) \le 0 \le \underline{v}_x(t, 0)\quad (0<t\le T)
	\qquad\text{or}\qquad
	\overline{u}(t,0)\ge u(t,0),\ \ \underline v(t,0)\le v(t,0)\quad (0<t\le T).
	\]
	Then
	\[
	h(t) \le \overline{h}(t) \text{ in } (0,T],\quad u(t,x) \le \overline{u} (t,x), ~ v(t,x) \ge \underline{v} (t,x) \text{ for } (t,x) \in (0,T] \times [0,+\infty).
	\]

	\emph{(b)} Assume that
	\[
	\begin{cases}
		\underline{u}_t \le d_1\underline{u}_{xx} + (A(x - ct) - b_1\underline{u} - c_1 \overline{v}) \underline{u}, & 0 < t \le T,~ 0 < x < \underline{h}(t), \\
		\overline{v}_t \ge d_2\overline{v}_{xx} + (a_2 - b_2\underline{u} - c_2\overline{v})\overline{v}, & 0 < t \le T,~ 0 < x < +\infty, \\
		\underline{u}(t, x) = 0, & 0 < t \le T, ~ \underline{h}(t) \leq x < +\infty, \\
		\underline{h}'(t) \le -\mu (A(\underline{h}(t) - ct))\underline{u}_x(t, \underline{h}(t)), & 0 < t \le T, \\
		\underline{h}(0) \le h_0,~ \underline{u}(0, x) \le u_0(x), & 0 \leq x \leq \underline h(0), \\
		\overline{v}(0, x) \ge v_0(x), & 0 \le x < +\infty,
	\end{cases}
	\]
	and that, in addition, either
	\[
	\underline{u}_x(t, 0) \ge 0 \ge \overline{v}_x(t, 0)\quad (0<t\le T)
	\qquad\text{or}\qquad
	\underline{u}(t,0)\le u(t,0),\ \ \overline v(t,0)\ge v(t,0)\quad (0<t\le T).
	\]
	Then
	\[
	h(t) \ge \underline{h}(t) \text{ in } (0,T],\quad u(t,x) \ge \underline{u} (t,x), ~ v(t,x) \le \overline{v} (t,x) \text{ for } (t,x) \in (0,T] \times [0,+\infty).
	\]
\end{theorem}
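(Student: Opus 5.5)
We prove (a) and obtain (b) by the symmetric argument with the roles of upper and lower solutions swapped. The model is \cite[Lemma 2.6]{MR3327894}. We reduce to the case of strict inequalities and then run a first-touching-time argument. The interleaved structure makes the pair $(u-\overline u,\underline v-v)$ satisfy a cooperative linear system, and the maximum principle applies to such systems.

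\emph{Step 1: strict free boundary comparison.} Assume first that $\overline h(0)>h_0$. We claim $h(t)<\overline h(t)$ for all $t\in(0,T]$. If not, let $t^*$ be the first time at which $h(t^*)=\overline h(t^*)$. On $(0,t^*]$ the solution domain $\{0<x<h(t)\}$ lies inside $\{0<x<\overline h(t)\}$, so we may set $U:=\overline u-u$ and $V:=v-\underline v$ there. Writing the differences of the reaction terms via the mean value theorem, we get
\[
U_t-d_1U_{xx}\ge \alpha_1 U+\beta_1 V,\qquad V_t-d_2V_{xx}\ge \alpha_2 V+\beta_2 U,
\]
where $\alpha_i$ are bounded and $\beta_1=c_1u\ge0$, $\beta_2=b_2\underline v\ge0$. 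All coefficients are bounded by \cref{th6} and by the $L^\infty$ assumptions. The system is cooperative.

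\emph{Step 2: boundary and initial data.} For $x\ge h(t)$ we have $U\ge0$, since $u=0$ there and $\overline u\ge0$. At $x=0$, under the second alternative we have $U,V\ge0$ directly. Under the first alternative we use $u_x=v_x=0$ to get $U_x(t,0)\le0$ and $V_x(t,0)\ge0$, and the Hopf lemma then excludes a negative minimum of $U$ or $V$ at $x=0$. The initial data satisfy $U(0,\cdot),V(0,\cdot)\ge0$.

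We then apply the maximum principle for weakly coupled cooperative systems. Because $V$ lives on the unbounded half line, we first multiply by $e^{-Kt}$ and use a barrier such as $\varepsilon(1+x^2)e^{Kt}$ to handle infinity, using that $V$ is bounded. This yields $U\ge0$ and $V\ge0$ on $(0,t^*]$, and the strong maximum principle gives $U>0$ inside.

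\emph{Step 3: contradiction at $t^*$.} At the point $(t^*,h(t^*))$ both $U$ and $u$ vanish. By Hopf, $\overline u_x(t^*,\overline h(t^*))<u_x(t^*,h(t^*))$. Since $A(\overline h-ct)=A(h-ct)$ there and $\mu>0$, this gives $\overline h'(t^*)>h'(t^*)$. But $t^*$ is a first touching time, so $\overline h'(t^*)\le h'(t^*)$, a contradiction. Hence $h<\overline h$ on $(0,T]$, and Step 2 then gives the inequalities for $u$ and $v$.

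Note that both Stefan conditions involve the same $A$ evaluated at the same point at touching time. This is what lets the shifting environment cause no extra difficulty.

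\emph{Step 4: removing the strictness assumption.} For the general case $\overline h(0)\ge h_0$ we perturb the initial data. Take $h_0^\varepsilon<h_0$ and $u_0^\varepsilon\le u_0$ with $u_0^\varepsilon\to u_0$, as in \cite{MR3327894}, for instance $h_0^\varepsilon=h_0(1-\varepsilon)$ with $u_0^\varepsilon$ a rescaled copy of $u_0$. Let $(u^\varepsilon,v^\varepsilon,h^\varepsilon)$ be the solution of \eqref{main} with this data and the same $v_0$. Steps 1–3 apply with $\overline h(0)>h_0^\varepsilon$. We then pass $\varepsilon\to0$, using continuous dependence on initial data, which comes from the contraction argument in \cref{th4}.

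\emph{Main obstacle.} The main difficulty is Step 2: applying the maximum principle to the coupled system on the time-dependent, partly unbounded domain. There is a mismatch of domains: $U$ is defined only for $x<\overline h(t)$, while $V$ lives on the whole half line. The region $h(t)<x<\overline h(t)$, where $u=0$, has to be included so that the coupling term $c_1 uV$ is handled correctly. The plan is to argue on the first time at which $\min\{U,V\}$ reaches $-\delta$, rather than invoking an abstract cooperative maximum principle.
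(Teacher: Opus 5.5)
Your proposal is correct and follows essentially the same route as the paper. Both use a first touching time $t^*$ when $\overline h(0)>h_0$, and treat $(\overline u-u,\,v-\underline v)$ as a cooperative linear system with an $e^{-Kt}$ factor and a barrier growing in $x$ (the paper uses $\tilde M(x^2+2dt)/\ell^2$ on $[0,\ell]$ and lets $\ell\to\infty$). Both then apply Hopf at $(t^*,h(t^*))$ with $\mu(A(\cdot))$ evaluated at the same point, and finish with the $(1-\epsilon)h_0$ approximation via continuous dependence.

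There is one small slip. Under the first alternative, $V_x(t,0)=v_x(t,0)-\underline v_x(t,0)=-\underline v_x(t,0)\le 0$, not $\ge 0$. It is precisely this sign that lets the Hopf lemma rule out a negative minimum of $V$ at $x=0$.
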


\begin{proof}
	We prove (a); part (b) is obtained by exchanging the roles of the two triples.
	Let $\tilde{M}$ be a common upper bound of $v$ and $\underline{v}$ on
	$[0,T]\times[0,+\infty)$ and set $w:=\tilde{M}-v$, $\overline{w} :=
	\tilde{M}-\underline{v}$. Since $s\mapsto \tilde M-s$ reverses the order and
	the resulting system for $(u,w)$ is cooperative, it suffices to prove that
	$u\le\overline u$ and $w\le\overline w$.

	\textit{Step 1: the case $h_0<\overline h(0)$.} We claim that
	$h(t)<\overline{h}(t)$ for all $t\in(0,T]$. If not, there is a first
	$t^*\in(0,T]$ with $h(t)<\overline{h}(t)$ on $(0,t^*)$ and
	$h(t^*)=\overline{h}(t^*)$; consequently
	\begin{equation}\label{eq:h}
		h'(t^*) \ge \overline{h}'(t^*).
	\end{equation}
	Set $U := (\overline{u} - u)e^{-Kt}$ and $W := (\overline{w} - w)e^{-Kt}$. By
	the mean value theorem,
	\begin{equation}\label{eq:sup2}
		\begin{cases}
			U_t - d_1 U_{xx} \ge -KU + c_{11}U + c_{12}W, & 0 < t \le t^*, ~ 0 \le x < h(t), \\
			W_t - d_2 W_{xx} \ge -KW + c_{21}U + c_{22}W, & 0 < t \le t^*, ~ 0 \le x < \infty, \\
			U(t,x) \ge 0, & 0 < t \le t^*, ~ h(t) \le x < \infty, \\
			U(0,x) \ge 0, ~ W(0,x) \ge 0, & 0 \le x < \infty,
		\end{cases}
	\end{equation}
	where
	\begin{align*}
		c_{11} &= A(x-ct) - b_1 (u+\overline u) - c_1 (\tilde{M} - \eta_1), & c_{12} &= c_1 \xi_1, \\
		c_{21} &= b_2 (\tilde{M} - \eta_2), & c_{22} &= a_2 - b_2 \xi_2 - c_2\big((\tilde M-\eta_2)+(\tilde M-\eta_2')\big),
	\end{align*}
	with $\xi_i$ between $u$ and $\overline{u}$ and $\tilde{M}-\eta_i$,
	$\tilde M-\eta_i'$ between $v$ and $\underline{v}$. All the $c_{ij}$ are
	bounded on $[0,t^*]\times[0,\infty)$ by \cref{th6} and the boundedness of the
	comparison functions, and, crucially,
	\begin{equation}\label{eq:coop}
		c_{12}\ge0,\qquad c_{21}\ge0 .
	\end{equation}
	We fix $K\ge 1+|c_{11}|+|c_{12}|+|c_{21}|+|c_{22}|$ on
	$[0,t^*]\times[0,\infty)$. Note that the third line of \eqref{eq:sup2} holds
	because $u\equiv0$ and $\overline u\ge0$ for $x\ge h(t)$; that $\overline u\ge0$
	follows from the maximum principle applied on
	$\{0\le x\le\overline h(t),\,0\le t\le T\}$.

	For $\ell>0$ define, with $d:=\max\{d_1,d_2\}$,
	\[
	\overline{U}(t,x) = U(t,x) + \frac{\tilde{M}(x^2 + 2dt)}{\ell^2},
	\qquad
	\overline{W}(t,x) = W(t,x) + \frac{\tilde{M}(x^2 + 2dt)}{\ell^2}.
	\]
	Using $\partial_t\frac{\tilde M(x^2+2dt)}{\ell^2}-d_i\partial_{xx}
	\frac{\tilde M(x^2+2dt)}{\ell^2}=\frac{2\tilde M(d-d_i)}{\ell^2}\ge0$,
	the pair $(\overline{U},\overline{W})$ satisfies on $[0,t^*]\times[0,\ell]$
	the same differential inequalities as $(U,W)$ up to a nonnegative right hand
	side, together with
	\[
	\overline{U}(t,x) \ge \tfrac{\tilde{M}(x^2 + 2dt)}{\ell^2} > 0 \ \ (h(t) \le x \le \ell),\qquad
	\overline{W}(t,\ell) \ge W(t,\ell) + \tilde{M} > 0,\qquad
	\overline{U}(0,\cdot),\overline{W}(0,\cdot)\ge0 ,
	\]
	the last inequality on $\{x=\ell\}$ because $|W|\le\tilde M$. Moreover, in the
	first alternative at $x=0$ we have
	$\overline U_x(t,0)\le0$ and $\overline W_x(t,0)\le0$, and in the second we
	have $\overline U(t,0)\ge0$ and $\overline W(t,0)\ge0$; in either case a
	negative minimum of $\overline U$ or $\overline W$ over
	$[0,t^*]\times[0,\ell]$ cannot be attained at $x=0$.

	Set
	$\kappa := \min\{\min_{[0,t^*]\times[0,\ell]}\overline{U},\,
	\min_{[0,t^*]\times[0,\ell]}\overline{W}\}$ and suppose $\kappa<0$. By the
	above, the minimum is attained either at some $(t_0,x_0)$ with
	$0<t_0\le t^*$, $0\le x_0<h(t_0)$ and $\overline U(t_0,x_0)=\kappa$, or at some
	$(t_1,x_1)$ with $0<t_1\le t^*$, $0\le x_1<\ell$ and
	$\overline W(t_1,x_1)=\kappa$. In the first case
	$\overline{U}_t(t_0,x_0)-d_1\overline{U}_{xx}(t_0,x_0)\le0$, whereas by
	\eqref{eq:coop} and the choice of $K$,
	\[
	-K\overline{U} + c_{11} \overline{U} + c_{12}\overline{W}
	\ \ge\ (-K + |c_{11}|)\kappa + c_{12}\kappa \ \ge\ -\kappa > 0
	\quad\text{ at }(t_0,x_0),
	\]
	a contradiction. The second case is identical. Hence $\kappa\ge0$, that is,
	\[
	U(t,x) \ge -\frac{\tilde{M}(x^2 + 2dt)}{\ell^2},
	\qquad W(t,x) \ge -\frac{\tilde{M}(x^2 + 2dt)}{\ell^2}
	\qquad (0\le t\le t^*,\ 0\le x\le\ell),
	\]
	and letting $\ell\to\infty$ gives $U\ge0$, $W\ge0$ on
	$[0,t^*]\times[0,\infty)$, i.e. $u\le\overline{u}$ and $v\ge\underline{v}$
	there.

	Finally, $Z:=\overline{u}-u$ satisfies
	$Z_t-d_1Z_{xx}\ge c_{11}Z+c_{12}(\overline{w}-w)\ge c_{11}Z$ on
	$\Omega_{t^*}:=\{0<t\le t^*,\ 0\le x<h(t)\}$ and $Z\ge0$, $Z\not\equiv0$;
	the strong maximum principle gives $Z>0$ in $\Omega_{t^*}$ and, since
	$Z(t^*,h(t^*))=\overline u(t^*,\overline h(t^*))=0$, the Hopf boundary lemma
	gives $Z_x(t^*,h(t^*))<0$, i.e. $\overline u_x(t^*,\overline h(t^*))<u_x(t^*,h(t^*))$.
	Consequently
	\[
	\overline h'(t^*)\ge-\mu\big(A(\overline h(t^*)-ct^*)\big)\overline u_x(t^*,\overline h(t^*))
	>-\mu\big(A(h(t^*)-ct^*)\big)u_x(t^*,h(t^*))=h'(t^*),
	\]
	contradicting \eqref{eq:h}. Therefore $h<\overline{h}$ on $(0,T]$, and the
	above argument applied on $[0,T]$ gives the assertion.

	\textit{Step 2: the case $h_0=\overline{h}(0)$.} For small $\epsilon>0$ let
	$(u_\epsilon,v_\epsilon,h_\epsilon)$ be the solution of \eqref{main} with
	$h_0$ replaced by $(1-\epsilon)h_0$ and $u_0$ replaced by a function
	$u_{0,\epsilon}$ satisfying \eqref{initial} on $[0,(1-\epsilon)h_0]$ with
	$u_{0,\epsilon}\le u_0$ and $u_{0,\epsilon}\to u_0$ in $C^2$. By Step 1,
	$u_\epsilon\le\overline{u}$, $v_\epsilon\ge\underline{v}$ and
	$h_\epsilon<\overline{h}$. By the continuous dependence of the solution of
	\eqref{main} on its data (\cref{th4}), $(u_\epsilon,v_\epsilon,h_\epsilon)\to
	(u,v,h)$ as $\epsilon\to0$, and the conclusion follows by passing to the
	limit.
\end{proof}

\begin{remark}\label{rm:sub_super}
	$(\overline{u},\underline{v},\overline{h})$ and
	$(\underline{u},\overline{v},\underline{h})$ as in \cref{th7} are called an
	upper solution and a lower solution of \eqref{main}, respectively. The same
	proof applies verbatim when $A(\cdot-ct)$ and $\mu(A(\cdot-ct))$ are replaced
	by the constants $a_1$ and $\mu(a_1)$, i.e. \cref{th7} also holds for
	\eqref{main2}; and it applies when the two triples are the solutions of
	\eqref{main} corresponding to two ordered sets of data, which yields the
	monotone dependence of $(u,-v,h)$ on $(u_0,-v_0,h_0)$ and on $\mu$.
\end{remark}

\section{Long-time dynamics: proof of \texorpdfstring{\cref{th:dynamics}}{the dynamics theorem}}\label{sec_3}

By \cref{th6} the free boundary $h$ is strictly increasing, so
\[
h_\infty := \lim_{t\to+\infty}h(t)\in(h_0,+\infty]
\]
is well defined. We treat the two cases $h_\infty<+\infty$ and $h_\infty=+\infty$
separately.

\subsection{The vanishing case}

The following elementary observation is used repeatedly.

\begin{lemma}\label{lem:reduction}
	Suppose that there exists $T_0>0$ such that $h(t)<ct$ for every $t\ge T_0$;
	this is the case, with $T_0:=h_\infty/c$, whenever $h_\infty<+\infty$. Then
	for every $t\ge T_0$,
	\[
	A(x-ct)=a_1 \ \text{ for all } x\in[0,h(t)]\qquad\text{and}\qquad
	\mu(A(h(t)-ct))=\mu(a_1),
	\]
	and consequently $(t,x)\mapsto(u(t+T_0,x),v(t+T_0,x),h(t+T_0))$ is the
	solution of the homogeneous problem \eqref{main2} with $(u_0,v_0,h_0)$
	replaced by $(u(T_0,\cdot),v(T_0,\cdot),h(T_0))$.
\end{lemma}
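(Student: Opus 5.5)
The argument is a direct verification. We take the two assertions of \cref{lem:reduction} in turn: first the parenthetical claim that $h_\infty<+\infty$ gives the hypothesis, then the identities for $A$ and $\mu$, and finally the identification with \eqref{main2}.

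\emph{Step 1: the parenthetical claim.} Suppose $h_\infty<+\infty$ and set $T_0:=h_\infty/c$. By \cref{th6}, $h$ is strictly increasing, so $h(t)<h_\infty$ for every finite $t$. Hence for $t\ge T_0$ we get $h(t)<h_\infty=cT_0\le ct$, which is exactly the hypothesis of the lemma.

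\emph{Step 2: the identities.} Fix $t\ge T_0$ and $x\in[0,h(t)]$. Then $x-ct\le h(t)-ct<0$, so \eqref{A} gives $A(x-ct)=a_1$. Taking $x=h(t)$ in particular gives $A(h(t)-ct)=a_1$, and therefore $\mu(A(h(t)-ct))=\mu(a_1)$. The equation for $v$ in \eqref{main} contains neither $A$ nor $\mu$, so it is already the same as in \eqref{main2}.

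\emph{Step 3: identification with \eqref{main2}.} Define the shifted triple $(\tilde u,\tilde v,\tilde h)(t,x):=(u,v,h)(t+T_0,x)$. By Step 2, for $t>0$ this triple satisfies every line of \eqref{main2}:
\begin{itemize}
\item the equations for $\tilde u$ and $\tilde v$;
\item the Neumann conditions at $x=0$ and the condition $\tilde u=0$ for $x\ge\tilde h(t)$;
\item the Stefan condition with the constant coefficient $\mu(a_1)$.
\end{itemize}
Its initial data are $(u(T_0,\cdot),v(T_0,\cdot),h(T_0))$.

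The only point that needs a word is that these data are admissible and that uniqueness for \eqref{main2} applies. For $\tilde u$ and $\tilde v$ this follows from:
\begin{itemize}
\item positivity and boundedness, from \cref{th6};
\item the boundary conditions $u_x(T_0,0)=0=u(T_0,h(T_0))$ and $v_x(T_0,0)=0$;
\item smoothness at the positive time $T_0$, from parabolic regularity.
\end{itemize}
For the lower bound on $v$: if one insists on the condition $\liminf v>0$, it is not needed for uniqueness, but it in fact holds, by comparing $v$ from below with the solution of $w_t=d_2w_{xx}+(a_2-b_2M_1-c_2w)w$. In any case the triple is the (unique, by \cref{th4} and \cref{rm:sub_super}) solution of \eqref{main2} with these data.

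There is no real obstacle here. The only care needed is the strict inequality $h(t)<h_\infty$, which comes from the strict monotonicity of $h$ in \cref{th6}.
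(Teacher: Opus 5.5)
Your proof is correct and follows the paper's own argument: you show $x-ct\le h(t)-ct<0$ on $[0,h(t)]$, read off $A(x-ct)=a_1$ and $\mu(A(h(t)-ct))=\mu(a_1)$ from \eqref{A}, and note that \eqref{main} then reduces to \eqref{main2}. You are slightly more careful than the paper in two places: you use the strict monotonicity of $h$ to cover $t=T_0$ itself (the paper only writes out $t>h_\infty/c$), and you check that the shifted data satisfy \eqref{initial}, which the paper does instead where the lemma is applied, via \cref{lem:vlow}.
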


\begin{proof}
	If $h_\infty<+\infty$ and $t>h_\infty/c$ then $ct>h_\infty\ge h(t)$. In
	either case, $x-ct\le h(t)-ct<0$ for every $x\in[0,h(t)]$, so \eqref{A} gives
	$A(x-ct)=a_1$ and $\mu(A(h(t)-ct))=\mu(a_1)$. Substituting into \eqref{main}
	yields \eqref{main2}.
\end{proof}

\begin{lemma}\label{th1:lm1}
	Let $(u,v,h)$ be the solution of \eqref{main} with initial functions
	satisfying \eqref{initial} and assume \eqref{weak_competition}. If
	$h_\infty<+\infty$, then
	\[
	h_\infty\le R^*,\qquad
	\lim_{t\to\infty} \|u(t,\cdot)\|_{C([0, h(t)])} = 0,\qquad
	\lim_{t\to\infty} v(t,\cdot) = \frac{a_2}{c_2}\ \text{ in } C^2_{\rm loc}([0,\infty)).
	\]
\end{lemma}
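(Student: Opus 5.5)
By \cref{lem:reduction}, after the time $T_0:=h_\infty/c$ the triple $(u,v,h)(\cdot+T_0,\cdot)$ solves the homogeneous system \eqref{main2} with initial data $(u(T_0,\cdot),v(T_0,\cdot),h(T_0))$. Since $h_\infty<+\infty$ and vanishing has been fully analysed for \eqref{main2} under \eqref{weak_competition} in \cite{MR3986328}, one could simply invoke that result. I would nevertheless write out the argument, because the bound $h_\infty\le R^*$ is the substantive part.

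\textbf{Step 1: $u\to0$.} Using the uniform bound $0<h'\le M_3$ from \cref{th6} and parabolic estimates after straightening the boundary, I get $\|h'\|_{C^{\alpha/2}([n,n+1])}\le C$ uniformly in $n$. Together with $h_\infty<\infty$ this forces $h'(t)\to0$. Suppose $\limsup_{t\to\infty}\|u(t,\cdot)\|_\infty=\delta>0$, attained along $t_n\to\infty$ at points $x_n$. Translate time by $t_n$ and use the uniform $C^{1+\alpha}$ bounds on the fixed domain $[0,h_\infty)$ after rescaling. A subsequence converges to an entire solution $(\tilde u,\tilde v)$ on $(-\infty,\infty)\times[0,h_\infty]$ (respectively $[0,\infty)$ for $v$) with $\tilde u(t,h_\infty)=0$ and $\tilde u\not\equiv0$. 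Then $\tilde u>0$ inside, and the Hopf lemma gives $\tilde u_x(0,h_\infty)<0$. But $h'=-\mu(a_1)u_x(t,h(t))\to0$, so $\tilde u_x(0,h_\infty)=0$, a contradiction. Hence $\|u(t,\cdot)\|_{C([0,h(t)])}\to0$.

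\textbf{Step 2: $v\to a_2/c_2$.} Given $\varepsilon>0$, for $t\ge T_\varepsilon$ we have $0\le u\le\varepsilon$. Then $v$ lies between solutions of $w_t=d_2w_{xx}+w(a_2-b_2\varepsilon-c_2w)$ and $w_t=d_2w_{xx}+w(a_2-c_2w)$ on $[0,\infty)$ with Neumann condition at $0$. Both start from data bounded between positive constants; here $\inf_x v(T_\varepsilon,\cdot)>0$ by comparison from \eqref{v0inf}, or at least locally positive, which suffices for locally uniform convergence via the classical half-line logistic result. These converge locally uniformly to $(a_2-b_2\varepsilon)/c_2$ and $a_2/c_2$ respectively. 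Letting $\varepsilon\to0$ and using parabolic regularity upgrades this to $C^2_{\rm loc}$ convergence.

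\textbf{Step 3: $h_\infty\le R^*$.} This is the main point. Suppose $h_\infty>R^*$. Pick $\varepsilon>0$ small and $\ell\in(R^*,h_\infty)$ such that the principal eigenvalue of $-d_1\phi''=\lambda\phi+(a_1-c_1(a_2/c_2+\varepsilon))\phi$ on $(0,\ell)$, with $\phi'(0)=\phi(\ell)=0$, is negative. This is possible because
\[
\frac{\pi^2 d_1}{4\ell^2}<a_1-\frac{c_1a_2}{c_2}
\]
exactly when $\ell>R^*$, by \eqref{ineq_star} and \eqref{Rstar}. Choose $T$ so large that $h(t)>\ell$ and, by Step 2 on the compact set $[0,\ell]$, $v\le a_2/c_2+\varepsilon$ there for $t\ge T$. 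Then $u$ is an upper solution of the scalar problem $w_t=d_1w_{xx}+w(a_1-c_1(a_2/c_2+\varepsilon)-b_1w)$ on $(0,\ell)$ with $w_x(t,0)=0$ and $w(t,\ell)=0$, starting from $u(T,\cdot)>0$. By the negative eigenvalue, that problem converges to its positive steady state, so $\liminf_{t\to\infty} u\ge$ a positive function on $(0,\ell)$. This contradicts Step 1, so $h_\infty\le R^*$.

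I expect the compactness argument in Step 1 to be the most delicate part, specifically obtaining Hölder control of $h'$ uniform in time. I would follow \cite[Lemma 3.1]{MR3327894} (see also \cite{MR3986328}), which after $T_0$ applies verbatim since the problem is exactly \eqref{main2}.
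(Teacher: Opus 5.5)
Your proof is correct, but it is organised differently from the paper's.

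\textbf{What the paper does.} After the same reduction (\cref{lem:reduction}), the paper does no analysis of its own.
\begin{enumerate}
\item It checks that $(u(T_0,\cdot),v(T_0,\cdot),h(T_0))$ satisfies \eqref{initial}, using \cref{lem:vlow} for $\liminf_{x\to\infty}v(T_0,x)>0$.
\item It reads off $u\to0$ and $v\to a_2/c_2$ from alternative (ii) of \cite[Theorem 1.1]{MR3986328}.
\item It gets $h_\infty\le R^*$ by a restart argument. If $h_\infty>R^*$, pick $T_1>T_0$ with $h(T_1)>R^*$. The spreading criterion \cite[Theorem 1.2]{MR3986328} says spreading occurs for \eqref{main2} whenever the initial habitat exceeds $R^*$, for any $\mu(a_1)$. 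Applied to the problem restarted at $T_1$, this contradicts $h_\infty<+\infty$.
\end{enumerate}

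\textbf{What you do.} You reprove these ingredients rather than cite them. You use the Du--Lin compactness/Hopf argument, with $h'\to0$, to get $u\to0$, and comparison for $v$. For the bound on $h_\infty$ you compare directly with the logistic problem on $(0,\ell)$, $\ell\in(R^*,h_\infty)$, with Neumann--Dirichlet conditions. Its principal eigenvalue $d_1\pi^2/(4\ell^2)$ lies below the effective rate $a_1-c_1(a_2/c_2+\varepsilon)$, which forces $\liminf u>0$ on $(0,\ell)$.

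\textbf{Trade-offs.} The paper's route is shorter, but it rests entirely on the precise statements of \cite{MR3986328]}. That is why it must verify \eqref{initial} at the restart time. Your route is self-contained modulo standard facts: the uniform H\"older bound on $h'$, half-line logistic convergence, and the mixed-boundary eigenvalue. It also makes transparent why $R^*$ is the threshold: at the front the native species is close to $a_2/c_2$, so the invader effectively grows at rate $a_1-a_2c_1/c_2$. Your Step~3 is the same logistic comparison the paper uses in Step~3 of \cref{th1:lm2}.

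\textbf{A small simplification.} Step~3 does not actually need Step~2. The bound $v\le V(t)\to a_2/c_2$, uniform in $x$ by ODE comparison as in \eqref{v1}, already gives $v\le a_2/c_2+\varepsilon$ on $[0,\ell]$ for large $t$.
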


\begin{proof}
	By \cref{lem:reduction}, after the time shift $T_0=h_\infty/c$ the triple
	$(u,v,h)$ solves the homogeneous problem \eqref{main2}, whose initial data
	$(u(T_0,\cdot),v(T_0,\cdot),h(T_0))$ satisfy \eqref{initial}: indeed
	$u(T_0,\cdot)>0$ on $[0,h(T_0))$, $u_x(T_0,0)=u(T_0,h(T_0))=0$, $v(T_0,\cdot)>0$,
	$v_x(T_0,0)=0$, and $\liminf_{x\to\infty}v(T_0,x)>0$ by \cref{lem:vlow} below.
	Since for the shifted triple the free boundary still converges to
	$h_\infty<+\infty$, alternative (ii) of \cite[Theorem 1.1]{MR3986328} occurs
	for \eqref{main2}, that is,
	\[
	\lim_{t\to\infty}\|u(t,\cdot)\|_{C([0,h(t)])}=0,\qquad
	\lim_{t\to\infty}v(t,\cdot)=\frac{a_2}{c_2}\ \text{ in }C^2_{\rm loc}([0,\infty)).
	\]
	It remains to prove $h_\infty\le R^*$. Suppose, by contradiction, that
	$h_\infty>R^*$. Since $h$ is increasing with limit $h_\infty$, we may pick
	$T_1>T_0$ with $h(T_1)>R^*$. Applying \cref{lem:reduction} with initial time
	$T_1$, the triple $(u(\cdot+T_1,\cdot),v(\cdot+T_1,\cdot),h(\cdot+T_1))$
	solves \eqref{main2} with initial free boundary $h(T_1)>R^*$. By
	\cite[Theorem 1.2]{MR3986328}, spreading always happens for \eqref{main2} when
	the initial habitat exceeds
	$\frac{\pi}{2}\sqrt{d_1c_2/(a_1c_2-a_2c_1)}=R^*$, whatever the value of
	$\mu(a_1)>0$. Hence $h(t)\to+\infty$, contradicting $h_\infty<+\infty$.
\end{proof}

The following simple lemma, used above and again in \cref{sec_6}, records the
fact that the native population never dies out at infinity.

\begin{lemma}\label{lem:vlow}
	Let $(u,v,h)$ be the solution of \eqref{main} with \eqref{initial}. Let
	$(\hat u,\hat v)$ solve the kinetic system
	\begin{equation}\label{ode}
		\hat u' = (a_1 - b_1\hat u - c_1 \hat v )\hat u,\quad
		\hat v' = (a_2 -b_2 \hat u - c_2 \hat v)\hat v,\quad
		(\hat{u}(0), \hat{v}(0)) = \big(\|u_0\|_\infty, \sigma_0^{\max}\big),
	\end{equation}
	with $\sigma_0^{\max}$ as in \eqref{v0inf}. Then
	\[
	u(t,x)\le\hat u(t),\qquad v(t,x)\ge\hat v(t)\qquad (t>0,\ x\ge0),
	\]
	and $(\hat u(t),\hat v(t))\to(u^*,v^*)$ as $t\to\infty$. In particular, for
	every $\varepsilon>0$ there is $T_\varepsilon>0$ with
	\begin{equation}\label{eq:uv_global}
		u(t,x)\le u^*+\varepsilon,\qquad v(t,x)\ge v^*-\varepsilon
		\qquad (t\ge T_\varepsilon,\ x\ge0).
	\end{equation}
\end{lemma}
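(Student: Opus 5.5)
The plan is to use the comparison principle \cref{th7}(a): the spatially constant pair $(\overline u,\underline v)=(\hat u(t),\hat v(t))$ from the kinetic system \eqref{ode} should act as an upper solution for $u$ paired with a lower solution for $v$. The free boundary cannot be matched by a constant, so I will not use the free-boundary part of \cref{th7}. Instead I will compare on fixed domains, in the cooperative form used in its proof.

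\textbf{Step 1: the ODE facts.} The kinetic system is competitive, and its positive orbits stay positive. Under \eqref{weak_competition}, $(u^*,v^*)$ is globally asymptotically stable in the open positive quadrant. Since $\|u_0\|_\infty>0$ and $\sigma_0^{\max}>0$, this gives $(\hat u,\hat v)\to(u^*,v^*)$. Both components are bounded and stay positive: $\hat u\le\max\{\|u_0\|_\infty,a_1/b_1\}$ and $\hat v\le\max\{\sigma_0^{\max},a_2/c_2\}$.

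\textbf{Step 2: the comparison.} Extend $u$ by $0$ for $x\ge h(t)$ and set $U=\hat u-u$, $W=v-\hat v$.

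On $\{0<x<h(t)\}$ we use $A(x-ct)\le a_1$. With the mean value theorem this gives
\[
U_t-d_1U_{xx}\ge (a_1-b_1(\hat u+u)-c_1\hat v)U+c_1 u\,W .
\]
Note the coupling coefficient $c_1u\ge0$: we write $c_1(v-\hat v)u$, and the coupling enters with a favourable sign.

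On $[0,\infty)$ we have
\[
W_t-d_2W_{xx}=(a_2-b_2u-c_2(v+\hat v))W+b_2\hat v\,U ,
\]
with coupling coefficient $b_2\hat v\ge0$.

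The remaining data have the right signs:
\begin{itemize}
\item at $x=0$, $U_x=W_x=0$;
\item on $x\ge h(t)$, $U=\hat u>0$;
\item at $t=0$, $U\ge0$ and $W=v_0-\sigma_0^{\max}\ge0$.
\end{itemize}

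The system for $(U,W)$ is therefore cooperative with bounded coefficients, and $U,W$ are bounded. This is exactly the setting of Step 1 of the proof of \cref{th7}. There one multiplies by $e^{-Kt}$, adds $\tilde M(x^2+2dt)/\ell^2$, rules out a negative minimum on $[0,t]\times[0,\ell]$, and lets $\ell\to\infty$. The same argument yields $U,W\ge0$, i.e. $u\le\hat u$ and $v\ge\hat v$.

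\textbf{Step 3: the uniform bounds.} Given $\varepsilon$, choose $T_\varepsilon$ from the convergence of $(\hat u,\hat v)$. Then \eqref{eq:uv_global} follows at once from Step 2.

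\textbf{Main obstacle.} The main obstacle is the interface $x=h(t)$. There $U$ has only a piecewise equation, since $u$ has a corner, so the minimum argument must be run with $U$ restricted to $\{x\le h(t)\}$. A negative minimum of $U$ cannot occur on $x\ge h(t)$, because there $U=\hat u>0$. This mirrors the treatment of the third line of \eqref{eq:sup2}, so it should not cause trouble.
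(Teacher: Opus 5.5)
Your proof is correct and follows the paper's argument: you compare $(u,v)$ with the spatially constant kinetic solution $(\hat u,\hat v)$, treated as an upper/lower pair for the cooperative reformulation, and then use the global stability of $(u^*,v^*)$ for the ODE. The paper instead applies \cref{th7}(a) formally to the triple $(\hat u,\hat v,+\infty)$, whereas you rerun the fixed-domain maximum-principle step of that proof directly, with the exact nonnegative couplings $c_1u$ and $b_2\hat v$; this sidesteps the theorem's hypothesis that $\overline h$ be a finite $C^1([0,T])$ function satisfying the Stefan inequality.
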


\begin{proof}
	Since $A(\cdot)\le a_1$, the triple $(\hat u,\hat v,+\infty)$ is an upper
	solution in the sense of \cref{th7}(a) (with the first alternative at $x=0$,
	both derivatives being $0$), because $\hat u,\hat v$ do not depend on $x$ and
	$\hat u(0)=\|u_0\|_\infty\ge u_0$, $\hat v(0)=\sigma_0^{\max}\le v_0$. The
	comparison principle gives the stated inequalities. The convergence
	$(\hat u,\hat v)\to(u^*,v^*)$ is the classical global stability of the
	coexistence state of a weakly competing Lotka--Volterra system under
	\eqref{weak_competition}; see e.g. \cite[Chapter 4]{MR1319817}.
\end{proof}

\subsection{The spreading case}

\begin{lemma}\label{th1:lm2}
	Let $(u,v,h)$ be the solution of \eqref{main} with \eqref{initial} and assume
	\eqref{weak_competition}. If $h_\infty=+\infty$, then
	\[
	\lim_{t\to\infty} (u(t,\cdot), v(t,\cdot)) = (u^*, v^*)\ \text{ in } C^2_{\rm loc}([0, \infty)).
	\]
\end{lemma}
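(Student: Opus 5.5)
The plan is a squeezing argument. Upper bounds for $u$ and lower bounds for $v$ come for free from \cref{lem:vlow}. The reverse bounds come from lower solutions on large fixed intervals where the environment is already homogeneous, and the resulting estimates are then iterated.

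\emph{Upper bound for $u$, lower bound for $v$.} By \eqref{eq:uv_global},
\[
\limsup_{t\to\infty}u(t,x)\le u^*,\qquad \liminf_{t\to\infty}v(t,x)\ge v^*,
\]
uniformly in $x\ge0$.

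\emph{Lower bound for $u$, upper bound for $v$.} The obstacle is that the favourable zone $\{x<ct\}$ only grows with time. Fix $L$ large, with $L>R^*$. Since $h_\infty=+\infty$, there is a time $T_L$ with
\[
h(T_L)>L \quad\text{and}\quad cT_L>L+l_0 .
\]
For $t\ge T_L$ and $x\in[0,L]$ we then have $A(x-ct)=a_1$. Hence on $[T_L,\infty)\times[0,L]$, $u$ satisfies
\[
u_t\ge d_1u_{xx}+(a_1-b_1u-c_1v)u,
\]
with Neumann data at $0$ and $u(t,L)>0$, because $h(t)>L$.

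First use the crude bound $v\le \overline v$, where $\overline v$ is the ODE solution tending to $a_2/c_2$ (or simply $\limsup v\le a_2/c_2$). Then $u$ is an upper solution, on $[0,L]$ with Dirichlet data, of
\[
w_t=d_1w_{xx}+\big(a_1-c_1(a_2/c_2+\varepsilon)-b_1w\big)w .
\]
Since $L>R^*$ and $a_1-c_1a_2/c_2>0$, the principal Dirichlet eigenvalue is negative, so $w$ converges to a positive steady state $W_L$. As $L\to\infty$, $W_L\to \underline u_1:=(a_1-c_1a_2/c_2)/b_1$ locally uniformly. Thus $\liminf u\ge \underline u_1$ locally uniformly. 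Feeding this into the $v$-equation on the whole half line gives $\limsup v\le \overline v_1:=(a_2-b_2\underline u_1)/c_2$. This step is only local in $x$, so I would compare $v$ with a solution of a Neumann problem on $[0,L]$, with boundary data $M_2$ at $x=L$, and then let $L\to\infty$; the same limiting procedure as in \cite{MR3986328} applies.

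\emph{Iteration.} Iterate in the standard way: the $u$-bound from the previous step gives a $v$-bound, which gives a new $u$-bound, and so on. This produces sequences
\[
\underline u_k\nearrow,\quad \overline v_k\searrow,
\]
satisfying $b_1\underline u_{k+1}=a_1-c_1\overline v_k$ and $c_2\overline v_{k}=a_2-b_2\underline u_k$, together with the analogous sequences $\overline u_k\searrow$ and $\underline v_k\nearrow$ (starting from $a_1/b_1$, or from \cref{lem:vlow} directly). Their limits solve the linear system whose unique solution is $(u^*,v^*)$, because $b_1c_2-b_2c_1>0$. Hence $u\to u^*$ and $v\to v^*$ locally uniformly.

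Finally, the parabolic estimates of \cref{th4} and \cref{th6}, combined with interior and boundary Schauder estimates, upgrade the convergence to $C^2_{\rm loc}([0,\infty))$.

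The main technical obstacle is making the comparison on the fixed interval $[0,L]$ rigorous for the pair while the interval is only eventually homogeneous, and handling the $v$-equation on an unbounded domain. For this I would rely on the reduction idea of \cref{lem:reduction}, applied on $[0,L]$ for $t\ge T_L$, and then follow the proof of the corresponding spreading lemma in \cite{MR3986328} essentially verbatim.
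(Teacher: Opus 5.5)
Your proposal is correct and is essentially the paper's proof. It uses the same iteration $\underline u_{n+1}=(a_1-c_1\overline v_n)/b_1$, $\overline v_{n+1}=(a_2-b_2\underline u_{n+1})/c_2$, with each step obtained by a scalar logistic comparison on $[0,\ell]$ (Neumann at $0$, Dirichlet data $0$ resp.\ $M_2$ at $\ell$) once $h(t)>\ell$ and $ct>\ell$, followed by parabolic estimates to reach $C^2_{\rm loc}$.

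The only difference is that you take $\limsup u\le u^*$, $\liminf v\ge v^*$ from \cref{lem:vlow}, whereas the paper compares with the homogeneous problem \eqref{main2} and invokes \cite[Theorem 1.1]{MR3986328}. Your route is legitimate and even gives bounds uniform in $x$. Also, every comparison in the scheme is scalar, so the ``pair'' obstacle you flag at the end does not actually arise.
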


\begin{proof}
	\textit{Step 0: upper bound for $u$ and lower bound for $v$.} Since
	$A(\cdot)\le a_1$ and $\mu(A(\cdot))\le\mu(a_1)$ while $-u_x(t,h(t))>0$, the
	triple $(u,v,h)$ is a lower solution of \eqref{main2} in the sense of
	\cref{th7}(b) (first alternative at $x=0$). Let
	$(\overline{u},\underline{v},\overline{h})$ denote the solution of
	\eqref{main2} with the same initial data. Then
	\begin{equation}\label{eq:comp_main2}
		u\le\overline{u},\qquad v\ge\underline{v},\qquad h\le\overline{h}.
	\end{equation}
	Since $h_\infty=+\infty$ we get $\overline{h}(t)\to+\infty$, so alternative
	(i) of \cite[Theorem 1.1]{MR3986328} holds for \eqref{main2}, that is,
	$(\overline{u},\underline{v})\to(u^*,v^*)$ in $C^2_{\rm loc}([0,\infty))$.
	Combining this with \eqref{eq:comp_main2},
	\begin{equation}\label{uv1}
		\limsup_{t \to +\infty} u(t, x) \le u^*, \qquad \liminf_{t \to +\infty} v(t, x) \ge v^*
	\end{equation}
	uniformly on compact subsets of $[0,\infty)$. It therefore suffices to prove
	\begin{equation}\label{uv2}
		\liminf_{t \to +\infty} u(t, x) \ge u^*, \qquad \limsup_{t \to +\infty} v(t, x) \le v^*
	\end{equation}
	locally uniformly in $[0,\infty)$.

	\textit{Step 1: an iteration scheme.} Set
	\[
	\overline{v}_0:=\frac{a_2}{c_2},\qquad
	\underline{u}_{n+1}:=\frac{a_1-c_1\overline{v}_n}{b_1},\qquad
	\overline{v}_{n+1}:=\frac{a_2-b_2\underline{u}_{n+1}}{c_2}\qquad(n\ge0).
	\]
	The map $\underline u\mapsto \frac{a_1}{b_1}-\frac{c_1}{b_1}
	\big(\frac{a_2}{c_2}-\frac{b_2}{c_2}\underline u\big)
	=\frac{a_1c_2-a_2c_1}{b_1c_2}+\frac{b_2c_1}{b_1c_2}\underline u$ is affine with
	slope $\frac{b_2c_1}{b_1c_2}\in(0,1)$, because \eqref{weak_competition} gives
	$c_1/c_2<b_1/b_2$; its unique fixed point is
	$\frac{a_1c_2-a_2c_1}{b_1c_2-b_2c_1}=u^*$. Since
	$\underline{u}_1=\frac{a_1c_2-a_2c_1}{b_1c_2}\in(0,u^*)$ by
	\eqref{ineq_star}, it follows that $\{\underline{u}_n\}$ is strictly
	increasing with $\underline{u}_n\uparrow u^*$ and $\{\overline{v}_n\}$ is
	strictly decreasing with $\overline{v}_n\downarrow v^*$; in particular
	\begin{equation}\label{eq:pos_iter}
		0<\underline{u}_n<u^*\ \ (n\ge1),
		\qquad
		v^*<\overline{v}_n\le\frac{a_2}{c_2}
		\ \text{ and }\
		a_1-c_1\overline{v}_n\ge a_1-\frac{a_2c_1}{c_2}>0\ \ (n\ge0),
	\end{equation}
	the last inequality by \eqref{ineq_star}.
	We claim that for every $n\ge0$,
	\begin{equation}\label{induc}
		\liminf_{t \to +\infty} u(t,x) \ge \underline{u}_n,  \qquad \limsup_{t \to +\infty} v(t,x) \le \overline{v}_n
	\end{equation}
	uniformly on compact subsets of $[0,\infty)$, with the convention
	$\underline u_0:=0$. Granting \eqref{induc}, letting $n\to\infty$ gives
	\eqref{uv2} and hence, together with \eqref{uv1}, the locally uniform
	convergence $(u,v)\to(u^*,v^*)$. Interior parabolic estimates (applied on
	$[T,T+2]\times[0,\ell]$ for $\ell$ fixed and $T\to\infty$, using the uniform
	bounds of \cref{th6}) then upgrade this convergence to $C^2_{\rm loc}([0,\infty))$,
	which is the assertion of the lemma.

	\textit{Step 2: \eqref{induc} for $n=0$.} The inequality
	$\liminf u\ge\underline u_0=0$ is trivial. For $v$, the comparison principle
	applied to $v_t-d_2v_{xx}\le v(a_2-c_2v)$ and to the solution $V_1$ of
	$V_1'=V_1(a_2-c_2V_1)$, $V_1(0)=\|v_0\|_\infty$, gives $v(t,x)\le V_1(t)$ for
	all $x\ge0$, $t>0$, whence
	\begin{equation}\label{v1}
		\limsup_{t \to +\infty}{v(t,x)} \le \frac{a_2}{c_2} = \overline{v}_0
		\quad\text{ uniformly for } x \in [0,\infty).
	\end{equation}

	\textit{Step 3: from $n$ to $n+1$.} Assume \eqref{induc} holds for some
	$n\ge0$. Fix $\varepsilon>0$ so small that
	\begin{equation}\label{eq:eps_small}
		a_1-c_1(\overline v_n+\varepsilon)>0
		\qquad\text{and}\qquad
		a_2-b_2(\underline u_{n+1}-\varepsilon)>0,
	\end{equation}
	which is possible by \eqref{eq:pos_iter}, and fix
	\begin{equation}\label{eq:ell_large}
		\ell>\max\left\{h_0,\ \frac{\pi}{2}\sqrt{\frac{d_1}{a_1-c_1(\overline v_n+\varepsilon)}},\
		\frac{\pi}{2}\sqrt{\frac{d_2}{a_2-b_2(\underline u_{n+1}-\varepsilon)}}\right\}.
	\end{equation}
	Since $h_\infty=+\infty$ and by the induction hypothesis, there exists
	$t_n>0$ such that
	\begin{equation}\label{eq:tn}
		h(t)>\ell,\quad ct>\ell,\quad
		u(t,x)\ge\underline{u}_n-\varepsilon,\quad
		v(t,x)\le \overline{v}_n+\varepsilon
		\qquad (t>t_n,\ x\in[0,\ell]).
	\end{equation}
	Because $ct>\ell$ we have $A(x-ct)=a_1$ for all $x\in[0,\ell]$ and $t>t_n$, so
	$u$ satisfies
	\[
	\begin{cases}
		u_t - d_1u_{xx} \ge \big(a_1 - b_1 u - c_1 (\overline{v}_n+\varepsilon)\big) u , & 0 < x< \ell, ~ t > t_n,\\
		u_x(t, 0) = 0, \quad u(t, \ell) > 0, & t > t_n .
	\end{cases}
	\]
	Hence $u$ is an upper solution of
	\[
	\begin{cases}
		\hat{u}_t = d_1 \hat{u}_{xx} + \big(a_1 - b_1 \hat{u} - c_1 (\overline{v}_n + \varepsilon)\big)\hat{u}, & 0 < x < \ell, ~ t > t_n,\\
		\hat{u}_x(t,0) = 0, \quad \hat{u}(t,\ell) = 0, & t > t_n,\\
		\hat{u}(t_n,x) = u(t_n,x), & 0 \le x \le \ell,
	\end{cases}
	\]
	so that $u(t,x)\ge\hat u(t,x)$ on $[0,\ell]$ for $t>t_n$. By the choice
	\eqref{eq:ell_large} of $\ell$, the principal eigenvalue of
	$-d_1\partial_{xx}$ on $(0,\ell)$ with Neumann condition at $0$ and Dirichlet
	condition at $\ell$, namely $d_1\pi^2/(4\ell^2)$, is smaller than
	$a_1-c_1(\overline v_n+\varepsilon)$; hence the above logistic problem has a
	unique positive steady state $\tilde u_\ell$, $\hat u(t,\cdot)\to\tilde u_\ell$
	uniformly on $[0,\ell]$ as $t\to\infty$, and
	$\tilde{u}_\ell\to\frac{a_1-c_1(\overline v_n+\varepsilon)}{b_1}$ locally
	uniformly on $[0,\infty)$ as $\ell\to\infty$ (see e.g.
	\cite[Lemma 2.2]{DU_MA_2001}). Since $\ell$ may be taken arbitrarily large and
	$\varepsilon>0$ arbitrarily small, we conclude
	\begin{equation}\label{eq:un1}
		\liminf_{t \to +\infty}{u(t,x)} \ge \frac{a_1-c_1\overline{v}_n}{b_1} = \underline{u}_{n+1}
		\quad\text{ locally uniformly in }[0,\infty).
	\end{equation}
	The bound for $v$ is obtained in exactly the same way. Indeed, by
	\eqref{eq:un1} there is $t_n'>t_n$ with
	$u(t,x)\ge\underline{u}_{n+1}-\varepsilon$ for $x\in[0,\ell]$, $t>t_n'$, and
	therefore
	\[
	\begin{cases}
		v_t - d_2v_{xx} \le \big(a_2 - b_2(\underline{u}_{n+1}-\varepsilon) - c_2 v\big) v , & 0 < x< \ell, ~ t > t_n',\\
		v_x(t, 0) = 0, \quad v(t, \ell) \le M_2, & t > t_n' ,
	\end{cases}
	\]
	so that $v\le\hat v$ on $[0,\ell]\times(t_n',\infty)$, where $\hat v$ solves
	the corresponding logistic problem with boundary value $M_2$ at $x=\ell$ and
	initial value $v(t_n',\cdot)$. Again by \eqref{eq:ell_large} this problem has a
	unique positive steady state, to which $\hat v(t,\cdot)$ converges, and that
	steady state tends to $\frac{a_2-b_2(\underline u_{n+1}-\varepsilon)}{c_2}$
	locally uniformly as $\ell\to\infty$. Letting $\ell\to\infty$ and
	$\varepsilon\to0$,
	\[
	\limsup_{t \to +\infty}{v(t,x)} \le \frac{a_2-b_2\underline{u}_{n+1}}{c_2} = \overline{v}_{n+1}
	\quad\text{ locally uniformly in }[0,\infty),
	\]
	which completes the induction and the proof.
\end{proof}

\begin{proof}[Proof of \cref{th:dynamics}]
	By \cref{th6}, $h_\infty$ exists in $(h_0,+\infty]$. If $h_\infty<+\infty$,
	\cref{th1:lm1} gives alternative (ii); if $h_\infty=+\infty$,
	\cref{th1:lm2} gives alternative (i). The two alternatives are mutually
	exclusive.
\end{proof}

\section{Criteria for spreading and vanishing: proof of \texorpdfstring{\cref{th:criteria}}{the criteria theorem}}\label{sec_4}

\begin{proof}[Proof of \cref{th:criteria}(i)]
	Assume $h_0\ge R^*$ and suppose that vanishing occurs. By \cref{th1:lm1} we
	would have $h_\infty\le R^*\le h_0$, contradicting $h_\infty>h_0$. Hence
	spreading occurs.
\end{proof}

\begin{proof}[Proof of \cref{th:criteria}(ii)]
	Assume $h_0<R_0$ and write $u_0=\sigma\phi$.

	\textit{Monotonicity in $\sigma$.} If $0<\sigma_1<\sigma_2$ then
	$\sigma_1\phi\le\sigma_2\phi$, and by \cref{rm:sub_super} the triple
	$(u^{\sigma_1},v^{\sigma_1},h^{\sigma_1})$ is a lower solution of the problem
	solved by $(u^{\sigma_2},v^{\sigma_2},h^{\sigma_2})$; the comparison principle
	yields $h^{\sigma_1}(t)\le h^{\sigma_2}(t)$ for all $t\ge0$. Hence, if
	spreading occurs for $\sigma_1$, then it occurs for every $\sigma\ge\sigma_1$.
	Consequently the set
	$\Sigma:=\{\sigma>0:\ \text{vanishing occurs for } u_0=\sigma\phi\}$ is an
	interval of the form $(0,\sigma_0]$ or $(0,\sigma_0)$ with
	$\sigma_0\in[0,+\infty]$, and it remains to rule out $\sigma_0=0$ and to check
	that the endpoint belongs to $\Sigma$.

	\textit{$\sigma_0>0$.} Since $A(\cdot)\le a_1$ and $v>0$, the pair $(u,h)$
	satisfies
	\[
	u_t-d_1u_{xx}\le u(a_1-b_1u)\ \ \text{ in }\{0<x<h(t)\},\qquad
	h'(t)\le -\mu(a_1)u_x(t,h(t)),
	\]
	together with $u_x(t,0)=0$ and $u(t,h(t))=0$. Thus $(u,h)$ is a lower solution
	of the single species free boundary problem
	\begin{equation}\label{DL}
		\begin{cases}
			U_t = d_1U_{xx}+U(a_1-b_1U), & t>0,\ 0<x<H(t),\\
			U_x(t,0)=U(t,H(t))=0,\ \ H'(t)=-\mu(a_1)U_x(t,H(t)), & t>0,\\
			H(0)=h_0,\ U(0,x)=\sigma\phi(x), & 0\le x\le h_0,
		\end{cases}
	\end{equation}
	so $h(t)\le H(t)$ for all $t$. Assume
	$h_0<R_0=\frac{\pi}{2}\sqrt{d_1/a_1}$. The upper solution constructed in the
	proof of \cite[Lemma 3.8]{MR2607347} then applies to \eqref{DL}: it forces
	$H_\infty<+\infty$ as soon as $\mu(a_1)M$ is small enough, where $M$ is any
	constant with $\sigma\phi(x)\le M\cos\big(\frac{\pi x}{2h_0(1+\delta/2)}\big)$
	on $[0,h_0]$ and $\delta$ depends only on $d_1,a_1,h_0$. Since $M$ may be
	taken proportional to $\sigma\|\phi\|_{\infty}$, vanishing occurs for
	\eqref{DL} whenever $\sigma\|\phi\|_{\infty}$ is small enough, $\mu(a_1)$
	being fixed. In that case $H_\infty<+\infty$, so
	$h_\infty\le H_\infty<+\infty$ and vanishing occurs for \eqref{main} as well,
	by \cite[Lemma 3.1]{MR2607347}. Therefore $\sigma_0>0$.

	\textit{The endpoint.} Let $\sigma_0<+\infty$ and suppose spreading occurred
	for $\sigma=\sigma_0$. Then $h^{\sigma_0}(T)>R^*$ for some $T>0$. The solution
	of \eqref{main} depends continuously on its initial data on the fixed time
	interval $[0,T]$: this follows from \cref{th4} and a standard continuation
	argument, the solution being global with bounds that are uniform for
	$\sigma$ in a compact set by \cref{th6}. Hence there is
	$\sigma<\sigma_0$ with $h^{\sigma}(T)>R^*$, and
	\cref{th1:lm1} forces spreading for that $\sigma$, contradicting
	$\sigma\in\Sigma$. Hence $\sigma_0\in\Sigma$, which is the assertion.
\end{proof}

\section{Semi-wave solutions}\label{sec_5}

Throughout this section we assume \eqref{weak_competition}. We first recall the
two results of \cite{MR3986328} that we shall use.

\begin{theorem}[see {\cite[Proposition 1.4]{MR3986328}} and \cref{rm:convert}]\label{th:c*}
	There exists a critical speed
	\[
	c^*\ \ge\ 2\sqrt{d_1\left(a_1-\frac{a_2c_1}{c_2}\right)}\ >0
	\]
	such that the problem
	\begin{equation}\label{tw}
		\begin{cases}
			-c \Psi' - d_1 \Psi'' = (a_1 - b_1\Psi - c_1\Phi)\Psi, & -\infty < x < +\infty,\\
			-c \Phi' - d_2 \Phi'' = (a_2 - b_2\Psi - c_2\Phi)\Phi, & -\infty < x < +\infty,\\
			\Psi(-\infty) = u^*, ~ \Psi(+\infty)=0, ~ \Psi'(x)<0 \text{ for } x \in \mathbb{R},\\
			\Phi(-\infty) = v^*, ~ \Phi(+\infty)=\frac{a_2}{c_2}, ~ \Phi'(x)>0 \text{ for } x\in \mathbb{R}
		\end{cases}
	\end{equation}
	has a solution when $c\ge c^*$ and has no solution when $c<c^*$.
\end{theorem}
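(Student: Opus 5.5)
The plan is to derive \cref{th:c*} from \cite[Proposition 1.4]{MR3986328} rather than prove it from scratch. The cited proposition is stated for the traveling waves of the weak competition system on the whole line, but in a coordinate/orientation convention (and possibly with the roles of the unknowns arranged) differently from \eqref{tw}. The argument therefore reduces to checking that a change of variables converts one problem into the other and preserves the existence/nonexistence threshold. This is the content of \cref{rm:convert}.

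\textbf{Step 1: match the two systems.} Write the system of \cite{MR3986328} as it appears there. Typically it is a wave $(U,V)(x-ct)$ moving to the right, with $U$ decreasing from $u^*$ to $0$ and $V$ increasing from $v^*$ to $a_2/c_2$, or a wave written in the variable $\xi=ct-x$. Substituting $(u,v)=(\Psi(x-ct),\Phi(x-ct))$ into the parabolic system without free boundary gives $-c\Psi'-d_1\Psi''=(a_1-b_1\Psi-c_1\Phi)\Psi$, and similarly for $\Phi$. So \eqref{tw} is exactly the traveling wave problem with speed $c$ connecting $(u^*,v^*)$ at $-\infty$ to $(0,a_2/c_2)$ at $+\infty$. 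If the reference uses the reflected variable $z=-x$, the map $\Psi(x)\mapsto\Psi(-x)$ flips the sign of the first-order terms and reverses the monotonicity and the limits. One checks that these flips cancel and that the monotonicity conditions $\Psi'<0<\Phi'$ correspond.

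\textbf{Step 2: transfer the threshold.} Under this bijection between solution sets, existence for $c\ge c^*$ and nonexistence for $c<c^*$ transfer verbatim. The reference's minimal speed is then our $c^*$.

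\textbf{Step 3: the lower bound $c^*\ge 2\sqrt{d_1(a_1-a_2c_1/c_2)}$.} If the reference does not state this bound explicitly, I would prove it directly by linearising at $+\infty$. As $x\to+\infty$ we have $\Phi\to a_2/c_2$ and $\Psi\to0^+$, so $\Psi$ satisfies $d_1\Psi''+c\Psi'+(a_1-c_1a_2/c_2+o(1))\Psi=0$ with $\Psi>0$. Suppose $c^2<4d_1(a_1-a_2c_1/c_2)$. Then the characteristic roots are complex, and a Sturm comparison argument (the same one used for the Fisher--KPP minimal speed) forces $\Psi$ to oscillate, contradicting positivity. Positivity of the bound follows from \eqref{ineq_star}.

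\textbf{Main obstacle.} The only real difficulty is bookkeeping. I must confirm that the sign and orientation conventions of \cite{MR3986328} match \eqref{tw} after the substitution, including which component is monotone in which direction. I must also confirm that the hypothesis \eqref{weak_competition} is exactly the one assumed there.
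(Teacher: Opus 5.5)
Your overall strategy is the paper's: import \cite[Proposition 1.4]{MR3986328} and translate it, which is all \cref{rm:convert} does. However, the translation you describe is not the one needed, and the conclusion of your Step 2 is false as stated.

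The cited result is not formulated for \eqref{tw} up to a reflection $x\mapsto -x$ or a relabelling of the unknowns. It concerns the non-dimensional system obtained from \eqref{main2} by the scaling $\hat u(t,x)=\frac{b_1}{a_1}u(t/a_1,\sqrt{d_1/a_1}\,x)$, $\hat v(t,x)=\frac{c_2}{a_2}v(t/a_1,\sqrt{d_1/a_1}\,x)$. That system is $\hat u_t=\hat u_{xx}+\hat u(1-\hat u-k\hat v)$, $\hat v_t=\frac{d_2}{d_1}\hat v_{xx}+\frac{a_2}{a_1}\hat v(1-h\hat u-\hat v)$, with $k=\frac{a_2c_1}{a_1c_2}$ and $h=\frac{a_1b_2}{a_2b_1}$. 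No orientation change can recover the eight independent coefficients of \eqref{tw} from this; the missing ingredient is the rescaling of amplitudes, space and time.

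Under that rescaling, a normalised wave $(\hat\Psi,\hat\Phi)$ with speed $\hat c$ gives $\Psi(z)=\frac{a_1}{b_1}\hat\Psi(\sqrt{a_1/d_1}\,z)$ and $\Phi(z)=\frac{a_2}{c_2}\hat\Phi(\sqrt{a_1/d_1}\,z)$. This pair solves \eqref{tw} with $c=\sqrt{a_1d_1}\,\hat c$. The limits match because $\frac{b_1}{a_1}u^*=\frac{1-k}{1-hk}$ and $\frac{c_2}{a_2}v^*=\frac{1-h}{1-hk}$ are the normalised coexistence values and $\frac{c_2}{a_2}\cdot\frac{a_2}{c_2}=1$. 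Monotonicity is also preserved. So the threshold structure does transfer, but with $c^*=\sqrt{a_1d_1}\,\hat c^*$, not with the reference's minimal speed itself. Likewise, \eqref{weak_competition} matches the hypothesis of \cite{MR3986328} only in the scaled form $k,h\in(0,1)$.

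Once the scaling is in place the rest of your plan goes through, and your Step 3 is a genuinely different and sound route to the lower bound. The paper simply carries $\hat c^*\ge 2\sqrt{1-k}$ through the factor $\sqrt{a_1d_1}$, obtaining $2\sqrt{a_1d_1(1-k)}=2\sqrt{d_1(a_1-a_2c_1/c_2)}$. Your linearisation at $x=+\infty$, combined with the Sturm (KPP-type) oscillation argument, proves the dimensional bound directly. It is therefore insensitive to the speed factor you overlooked. Given existence for every $c\ge c^*$, it suffices to exclude waves for $c$ slightly below $2\sqrt{d_1(a_1-a_2c_1/c_2)}$, and that is exactly the range where your argument applies.
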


\begin{theorem}[see {\cite[Theorem 1.5]{MR3986328}} and \cref{rm:convert}]\label{th:semi-wave0}
	For each $c\in[0,c^*)$ the problem
	\begin{equation}\label{semiwave0}
		\begin{cases}
			-c \psi' - d_1 \psi'' = (a_1 - b_1\psi - c_1\varphi)\psi, & -\infty < x < 0,\\
			-c \varphi' - d_2 \varphi'' = (a_2 - b_2\psi - c_2\varphi)\varphi, & -\infty < x < +\infty,\\
			\psi(-\infty) = u^*, ~ \psi(x)=0 \text{ for } x\ge 0, ~ \psi'(x)<0 \text{ for } x\le 0,\\
			\varphi(-\infty) = v^*, ~ \varphi(+\infty)=\frac{a_2}{c_2}, ~ \varphi'(x)>0 \text{ for } x\in \mathbb{R}
		\end{cases}
	\end{equation}
	has a unique solution $(\Psi_c, \Phi_c) \in [C(\mathbb{R})\cap C^2((-\infty, 0])] \times C^2(\mathbb{R})$,
	and it has no such solution for $c\ge c^*$. The map
	$c\mapsto(\Psi_c,\Phi_c)$ is continuous from $[0,c^*)$ into
	$C^2_{\rm loc}((-\infty,0])\times C^2_{\rm loc}(\mathbb{R})$; in particular
	$c\mapsto\Psi_c'(0)$ is continuous on $[0,c^*)$. Moreover there is a unique
	$c_0\in(0,c^*)$ with
	\begin{equation}\label{c0}
		-\mu(a_1) \Psi_{c_0}'(0) = c_0 ,
	\end{equation}
	and $c_0$ depends continuously on $a_1,a_2,b_1,b_2,c_1,c_2,d_1,d_2$ and
	$\mu(a_1)$.
\end{theorem}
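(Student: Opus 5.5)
The statement is essentially a transcription of \cite[Theorem 1.5]{MR3986328}, so the plan is to reduce it to that result by a change of variables, as announced in \cref{rm:convert}, rather than to rebuild the semi-wave theory. In \cite{MR3986328} the semi-wave is written, as is customary, on the half line $(0,+\infty)$ with profile increasing from $0$ to the coexistence value, in the moving coordinate $\xi=ct-x$. So the first step is to set
\[
\psi(x):=q(-x),\qquad \varphi(x):=p(-x),
\]
where $(q,p)$ denotes the semi-wave of \cite{MR3986328}. I will then check line by line that the equations, the boundary values $\psi(-\infty)=u^*$, $\varphi(-\infty)=v^*$, $\varphi(+\infty)=a_2/c_2$, and the strict monotonicity signs transform into \eqref{semiwave0}. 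The sign of the drift term $-c\psi'$ is the only place where a slip is likely, so I would verify it by plugging $\psi(x)=q(-x)$ directly into the equation. The parameters $a_1,b_1,c_1,d_1$ and the constant boundary coefficient $\mu(a_1)$ play the roles of the constant coefficients of \cite{MR3986328}. Assumption \eqref{weak_competition} is exactly their weak competition hypothesis, so existence and uniqueness for $c\in[0,c^*)$ and nonexistence for $c\ge c^*$ transfer verbatim, with the same $c^*$ as in \cref{th:c*}.

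Second, I will address continuity of $c\mapsto(\Psi_c,\Phi_c)$ in $C^2_{\rm loc}$, in case it is not stated explicitly in the cited theorem. The argument is compactness plus uniqueness. Take $c_n\to c\in[0,c^*)$. The profiles are bounded by $a_1/b_1$ and $a_2/c_2$, so elliptic estimates give convergence of a subsequence in $C^2_{\rm loc}$ to a solution of the limiting equations, with monotonicity preserved in the weak sense. The main obstacle is to show that the limit has the right behaviour at $\pm\infty$ and is not degenerate, i.e.\ that $\psi\not\equiv0$ and the limits at $-\infty$ are $(u^*,v^*)$ rather than the semi-trivial state $(0,a_2/c_2)$.

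For this step I would first try to obtain a uniform lower bound $\psi_{c_n}\ge\delta>0$ on $(-\infty,-L]$ by comparing with the Dirichlet problem on a long interval. There $\varphi\le a_2/c_2$, and the principal eigenvalue is below $a_1-a_2c_1/c_2>0$ by \eqref{ineq_star}. This keeps $\psi$ away from zero and, by monotonicity, forces $\psi(-\infty)>0$. Once $\psi(-\infty)>0$, the limits at $-\infty$ must be a positive equilibrium of the kinetic system, hence $(u^*,v^*)$. For $\varphi(+\infty)$: since $\psi=0$ on $x\ge0$, the function $\varphi$ solves a scalar logistic equation there, and monotone bounded solutions of that equation tend to $a_2/c_2$. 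Strict monotonicity then follows from the strong maximum principle, and uniqueness identifies the limit, so the full family converges. Continuity of $\Psi_c'(0)$ is immediate from $C^1$ convergence up to $x=0$.

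Finally, for $c_0$, I will use that $c\mapsto-\Psi_c'(0)$ is positive (Hopf lemma), continuous, and strictly decreasing on $[0,c^*)$, and that it tends to $0$ as $c\uparrow c^*$. Both facts are part of \cite[Theorem 1.5]{MR3986328}. The limit can also be argued directly: otherwise the compactness argument above would yield a semi-wave at speed $c^*$, which does not exist. Then $g(c):=-\mu(a_1)\Psi_c'(0)-c$ is continuous and strictly decreasing, with $g(0)>0$ and $g(c)\to-c^*<0$ as $c\uparrow c^*$, so it has a unique zero $c_0\in(0,c^*)$. Continuous dependence of $c_0$ on the coefficients follows by the same compactness/uniqueness argument, now applied with the coefficients varying as well.
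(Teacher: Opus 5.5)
Your outline (cite \cite[Theorem 1.5]{MR3986328}, get continuity in $c$ from compactness plus uniqueness, locate $c_0$ by the intermediate value theorem) is the paper's, but two steps fail as written.

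\emph{The reduction.} As \cref{rm:convert} records, \cite{MR3986328} works with the non-dimensional system $u_t=u_{xx}+u(1-u-kv)$, $v_t=Dv_{xx}+rv(1-v-hu)$. Its semi-waves tend to $(\frac{1-k}{1-hk},\frac{1-h}{1-hk})$ behind the front, and its $v$-profile tends to the carrying capacity $1$ ahead of it. So $a_1,b_1,c_1,d_1,\mu(a_1)$ do not play the role of the constants there, and \eqref{weak_competition} is not literally their hypothesis. No reflection $\psi(x)=q(-x)$ can produce the boundary values $u^*$, $v^*$, $a_2/c_2$ of \eqref{semiwave0}, so your line-by-line check would fail. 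What is needed is the rescaling
\[
\Psi_c(x)=\tfrac{a_1}{b_1}\hat\Psi_{\hat c}(\sqrt{a_1/d_1}\,x),\qquad \Phi_c(x)=\tfrac{a_2}{c_2}\hat\Phi_{\hat c}(\sqrt{a_1/d_1}\,x),\qquad \hat c=c/\sqrt{a_1d_1},
\]
with $k=\frac{a_2c_1}{a_1c_2}$, $h=\frac{a_1b_2}{a_2b_1}$, $D=d_2/d_1$, $r=a_2/a_1$ and boundary coefficient $a_1\mu(a_1)/(b_1d_1)$. Under this scaling:
\begin{itemize}
\item \eqref{weak_competition} becomes $k,h\in(0,1)$;
\item the coexistence state maps to $(u^*,v^*)$;
\item the critical speed of \cite{MR3986328}, multiplied by $\sqrt{a_1d_1}$, is $c^*$;
\item the identity $-\mu(a_1)\Psi_c'(0)=c$ is invariant.
\end{itemize}
A reflection, if the sign conventions require one at all, is the trivial part.

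\emph{Nondegeneracy in the continuity argument.} You bound $\psi_{c_n}$ from below by a Dirichlet problem on a long interval and claim its principal eigenvalue is below $a_1-a_2c_1/c_2$. This ignores the drift. The principal Dirichlet eigenvalue of $-(d_1\partial_{xx}+c\partial_x)$ on an interval of length $\ell$ is $\frac{c^2}{4d_1}+\frac{d_1\pi^2}{\ell^2}$. It stays above $a_1-a_2c_1/c_2$ for every $\ell$ once $c\ge 2\sqrt{d_1(a_1-a_2c_1/c_2)}$. Since \cref{th:c*} only gives $c^*\ge2\sqrt{d_1(a_1-a_2c_1/c_2)}$, and strict inequality is not excluded, your argument gives nothing for $c\in[2\sqrt{d_1(a_1-a_2c_1/c_2)},c^*)$.

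What closes the gap is monotonicity in $c$. For $0\le c<c'<c^*$, the computation of \cref{lm:supersub}, with $c_0$ replaced by $c'$, shows that $(\Psi_{c'},\frac{a_2}{c_2}-\Phi_{c'})$ is a lower solution of the cooperative $c$-problem lying below $(u^*,\frac{a_2}{c_2}-v^*)$. Monotone iteration, as in the existence part of \cref{th:semi-wave}, together with uniqueness then gives $\Psi_c\ge\Psi_{c'}$ and $\Phi_c\le\Phi_{c'}$. Given $c_n\to c<c^*$, fix $c'\in(c,c^*)$. Then $\Psi_{c_n}\ge\Psi_{c'}$ for large $n$, so the limit is nondegenerate with limits $(u^*,v^*)$ at $-\infty$, and your remaining steps go through. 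The same ordering, combined with the strong maximum principle and the Hopf lemma, also yields the strict decrease of $c\mapsto-\Psi_c'(0)$ that you take from the citation. Your direct argument that $\Psi_c'(0)\to0$ as $c\uparrow c^*$ and the final intermediate value step are fine.
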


\begin{remark}\label{rm:convert}
	Theorems~\ref{th:c*} and \ref{th:semi-wave0} are stated in \cite{MR3986328}
	for the non-dimensional system obtained from \eqref{main2} by the scaling
	$\hat u(t,x)=\frac{b_1}{a_1}u\big(t/a_1,\sqrt{d_1/a_1}\,x\big)$,
	$\hat v(t,x)=\frac{c_2}{a_2}v\big(t/a_1,\sqrt{d_1/a_1}\,x\big)$, in which the
	competition coefficients become $k=\frac{a_2c_1}{a_1c_2}$ and
	$h=\frac{a_1b_2}{a_2b_1}$, so that \eqref{weak_competition} is equivalent to
	$k,h\in(0,1)$. Speeds are multiplied by $\sqrt{a_1d_1}$ under this scaling,
	so the bound $c^*\ge2\sqrt{1-k}$ of \cite[Proposition 1.4]{MR3986328} becomes
	$c^*\ge2\sqrt{a_1d_1(1-k)}=2\sqrt{d_1(a_1-a_2c_1/c_2)}$, as stated in
	\cref{th:c*}. The continuity of $c\mapsto(\Psi_c,\Phi_c)$ follows from the
	uniqueness statement together with standard elliptic estimates and a
	compactness argument, exactly as in \cite[Theorem 1.3(ii)]{MR3609207}.
\end{remark}

The main result of this section supplies the semi-waves adapted to the shifting
environment. It is the analogue of \cite[Proposition 1.1]{HU20205931} for the
competition system, and of the corresponding result of \cite{DLN2026a} in the
weak--strong regime.

\begin{theorem}\label{th:semi-wave}
	Assume \eqref{weak_competition} and $0<c\le c_0$. Then the following hold.
	\begin{itemize}
		\item[(i)] For every $L\ge0$ the problem
		\begin{equation}\label{eqlm1}
			\begin{cases}
				-c \psi' - d_1 \psi'' = (A(x) - b_1\psi - c_1\varphi)\psi, & -\infty < x < L,\\
				-c \varphi' - d_2 \varphi'' = (a_2 - b_2\psi - c_2\varphi)\varphi, & -\infty < x < +\infty,\\
				\psi(-\infty) = u^*, ~ \psi(x)=0 \text{ for } x\ge L, ~ \psi'(x)<0 \text{ for } x\le L,\\
				\varphi(-\infty) = v^*, ~ \varphi(+\infty)=\frac{a_2}{c_2}, ~ \varphi'(x)>0 \text{ for } x\in \mathbb{R}
			\end{cases}
		\end{equation}
		has a unique solution
		$(\psi_L,\varphi_L) \in [C(\mathbb{R})\cap C^2((-\infty, L])] \times C^2(\mathbb{R})$.
		\item[(ii)] The map $L \mapsto \psi'_L(L)$ is strictly increasing on
		$[0, +\infty)$ and $\lim_{L\to+\infty}\psi'_L(L)=0$.
		\item[(iii)] There exists a unique $L_0 \geq 0$ such that
		$-\mu(A(L_0))\psi'_{L_0}(L_0) = c$. Moreover $L_0 = 0$ if and only if
		$c = c_0$, in which case $(\psi_0,\varphi_0) \equiv (\Psi_{c_0}, \Phi_{c_0})$.
	\end{itemize}
\end{theorem}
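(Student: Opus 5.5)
Existence in (i) will come from a monotone iteration / upper–lower solution argument on bounded intervals followed by a limit, as in \cite[Proposition 1.1]{HU20205931} and \cite{MR3986328}. Fix $L\ge0$ and, for $\ell>L$, consider the problem on $[-\ell,L]\times[-\ell,\ell]$ with $\psi(-\ell)=u^*$, $\psi(L)=0$, $\varphi(-\ell)=v^*$, $\varphi(\ell)=a_2/c_2$. Setting $w=a_2/c_2-\varphi$ makes the system cooperative. The ordered pair of upper and lower solutions will be $(\overline\psi,\underline\varphi)=(u^*,v^*)$ truncated at $L$, paired with $(\underline\psi,\overline\varphi)=(\Psi_{c}(\cdot-L)\,\text{suitably cut},\,a_2/c_2)$. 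Since $A\le a_1$, the bound $\psi\le u^*$ only needs $A\le a_1$. For the lower bound on $\psi$ I will use the fact that $A(x)=a_1$ for $x\le0$: a compactly supported sine-type subsolution placed in $(-\infty,0)$ works, because $a_1-c_1a_2/c_2>0$ by \eqref{ineq_star}. Monotone iteration then gives a solution on each bounded box. Monotonicity ($\psi'<0$, $\varphi'>0$) follows from the sliding method, using the monotone choice of boundary data and the fact that $A$ is nonincreasing. Letting $\ell\to\infty$ with local elliptic estimates gives a limit. The limits at $-\infty$ are $(u^*,v^*)$, since monotone bounded limits must be constant equilibria and the lower bound excludes $\psi\to0$; at $+\infty$, $\varphi$ solves the scalar logistic equation, so $\varphi\to a_2/c_2$.

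Uniqueness in (i) is the main obstacle, because the state at $-\infty$ is a genuine coexistence equilibrium. I plan to use the sliding/ratio method of \cite{MR3986328}. Given two solutions, consider $k^*=\inf\{k\ge1: k\psi_1\ge\psi_2,\ \varphi_1\le\ldots\}$, or more precisely shift-type comparisons combined with the exponential asymptotics at $-\infty$ from the four-dimensional linearisation (\cref{lm:asym}). These asymptotics show that the differences decay at comparable rates, so the extremal parameter is attained at a finite point, where the strong maximum principle or Hopf lemma gives a contradiction. To exploit the structure, I will compare $(\psi_1,\,a_2/c_2-\varphi_1)$ and $(\psi_2,\,a_2/c_2-\varphi_2)$ as solutions of a cooperative system.

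For (ii) I compare $\psi_{L_1}$ with $\psi_{L_2}(\cdot+L_2-L_1)$ for $L_1<L_2$. Shifting $\psi_{L_2}$ left to end at $L_1$ replaces $A(x)$ by $A(x+L_2-L_1)\le A(x)$, so the shifted pair is a lower/upper pair for the $L_1$ problem. By uniqueness and the sliding argument, $\psi_{L_1}\ge\psi_{L_2}(\cdot+L_2-L_1)$. Strictness comes from $A$ strictly decreasing on $[0,l_0]$ together with the Hopf lemma at $L_1$, and it yields $\psi_{L_1}'(L_1)<\psi_{L_2}'(L_2)$. For the limit, as $L\to\infty$ the profile near $x=L$ lives where $A=a_0<0$ (for $L>l_0$). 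Writing $\tilde\psi(y)=\psi_L(y+L)$ on $y\in[-(L-l_0),0]$, we have $\tilde\psi\le u^*$ and $-c\tilde\psi'-d_1\tilde\psi''\le a_0\tilde\psi$. An exponential upper barrier $u^*e^{\lambda(y+L-l_0)}$-type supersolution with the root $\lambda>0$ of $d_1\lambda^2+c\lambda+a_0=0$ then gives $|\tilde\psi'(0)|\le Ce^{-\lambda(L-l_0)}\to0$.

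For (iii), define $F(L)=-\mu(A(L))\psi_L'(L)$. By (ii) and the monotonicity of $\mu\circ A$, $F$ is strictly decreasing, and continuity follows from uniqueness plus compactness. Also $F(L)\to0<c$. At $L=0$, $A\equiv a_1$ on $(-\infty,0]$, so $(\psi_0,\varphi_0)$ solves \eqref{semiwave0} with speed $c<c^*$ and by uniqueness equals $(\Psi_c,\Phi_c)$. Hence $F(0)=-\mu(a_1)\Psi_c'(0)$. The function $c\mapsto -\mu(a_1)\Psi_c'(0)-c$ is positive for $c<c_0$ and vanishes only at $c_0$ by \cref{th:semi-wave0}, so $F(0)\ge c$ with equality iff $c=c_0$. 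The intermediate value theorem then yields a unique $L_0$, with $L_0=0$ iff $c=c_0$.
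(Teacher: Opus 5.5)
\textbf{Gap: the lower solution in the existence part of (i).} This is exactly where the hypothesis $c\le c_0$ has to enter, and your existence argument never uses it.

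First, the pair $(\underline\psi,\overline\varphi)=(\Psi_c(\cdot-L),\tfrac{a_2}{c_2})$ is not a lower solution. Using the equation satisfied by $\Psi_c$ one finds
\[
d_1\underline\psi''+c\underline\psi'+\Big(A(x)-b_1\underline\psi-c_1\tfrac{a_2}{c_2}\Big)\underline\psi
=\Big(A(x)-a_1-c_1\big(\tfrac{a_2}{c_2}-\Phi_c(x-L)\big)\Big)\underline\psi<0
\]
wherever $\underline\psi>0$. So it is a strict \emph{super}solution. Two things push in the wrong direction: freezing $\varphi$ at $a_2/c_2$, and shifting the support into $(0,L)$, where $A<a_1$.

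Second, your fallback, a compactly supported sine bump in $(-\infty,0)$ with $\varphi\equiv a_2/c_2$, overlooks the drift. The substitution $\psi=e^{-cx/(2d_1)}w$ turns $d_1\psi''+c\psi'+r\psi$ into $e^{-cx/(2d_1)}\big(d_1w''+(r-\tfrac{c^2}{4d_1})w\big)$. With $r=a_1-c_1a_2/c_2$, such a bump therefore exists only if $c<2\sqrt{d_1(a_1-a_2c_1/c_2)}$. The paper only knows $c_0<c^*$ and $c^*\ge 2\sqrt{d_1(a_1-a_2c_1/c_2)}$ (\cref{th:c*}), so nothing guarantees that every $c\le c_0$ lies in this range. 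Outside it, your box solutions have no $\ell$-independent positive lower bound, and the limit $\ell\to\infty$ may well be $\psi\equiv0$.

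The threshold that actually governs existence is $c^*$: for $L=0$ the problem is \eqref{semiwave0}, which is solvable exactly when $c<c^*$. That threshold can only be reached through a lower solution in which $\varphi$ stays coupled to $\psi$.

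\textbf{The missing idea, and what else needs fixing.} Take the coupled homogeneous semi-wave as the lower solution, \emph{unshifted}, so that its support lies in $(-\infty,0]$ where $A\equiv a_1$. The paper uses $(\Psi_{c_0},\tfrac{a_2}{c_2}-\Phi_{c_0})$ in the cooperative variables (\cref{lm:supersub}). The speed mismatch has the right sign because $(c-c_0)\Psi_{c_0}'\ge0$ and $-(c-c_0)\Phi_{c_0}'\ge0$ for $c\le c_0$. The corner of $\Psi_{c_0}$ at $0$ makes it a weak lower solution. The unshifted pair $(\Psi_c,\tfrac{a_2}{c_2}-\Phi_c)$, an exact solution on $(-\infty,0)$, would serve equally well. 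This choice also gives $\Psi_{c_0}\le\psi_L\le u^*$ and $v^*\le\varphi_L\le\Phi_{c_0}$, hence the limits at $-\infty$ at once, before any monotonicity is known.

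On your boxes, the datum $\varphi(\ell)=a_2/c_2$ then lies above $\Phi_{c_0}(\ell)$, i.e.\ outside the order interval. Replace it by, e.g., $\varphi(\ell)=\Phi_{c_0}(\ell)$; this is still compatible with your box-wise sliding, because $\varphi(\ell-s)\le\Phi_{c_0}(\ell-s)<\Phi_{c_0}(\ell)$. With that repair, your box-wise sliding for monotonicity is a legitimate alternative to the paper's whole-line sliding. Your uniqueness argument, (ii) and (iii) follow the paper.

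In the uniqueness sliding you must also control $\varphi_2^\xi-\varphi_1$ near $+\infty$, where both tend to $a_2/c_2$. The paper does this by the maximum principle on a half-line where $a_2-c_2(\varphi_2^\xi+\varphi_1)<0$; see \eqref{eq:tail}.

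Finally, there is a slip in (ii): $u^*e^{\lambda(y+L-l_0)}$ with $\lambda>0$ increases in $y$ and is useless as a barrier. Use the negative root $\beta$ of $d_1\beta^2+c\beta+a_0=0$, i.e.\ $u^*e^{\beta(y+L-l_0)}$. Then use elliptic estimates on $[-2,0]$ to pass from smallness of $\tilde\psi$ to $\tilde\psi'(0)\to0$, since the barrier does not vanish at $y=0$.
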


The proof occupies the rest of this section. It is convenient to transform
\eqref{eqlm1} into a cooperative system. Setting
\begin{equation}\label{cooper}
	\Theta_1:=\psi,\qquad \Theta_2:=\frac{a_2}{c_2}-\varphi ,
\end{equation}
a direct computation shows that \eqref{eqlm1} is equivalent to
\begin{equation}\label{eqcoop}
	\begin{cases}
		d_1\Theta_1''+c\Theta_1'+F_1(x,\Theta)=0, & -\infty<x<L,\\
		d_2\Theta_2''+c\Theta_2'+F_2(\Theta)=0, & -\infty<x<+\infty,
	\end{cases}
\end{equation}
where
\[
F_1(x,\Theta):=\Big(A(x)-\frac{a_2c_1}{c_2}-b_1\Theta_1+c_1\Theta_2\Big)\Theta_1,
\qquad
F_2(\Theta):=\big(b_2\Theta_1-c_2\Theta_2\big)\Big(\frac{a_2}{c_2}-\Theta_2\Big).
\]
On the rectangle $\mathcal R:=[0,u^*]\times[0,\frac{a_2}{c_2}-v^*]$ we have
\[
\partial_{\Theta_2}F_1=c_1\Theta_1\ge0,
\qquad
\partial_{\Theta_1}F_2=b_2\Big(\frac{a_2}{c_2}-\Theta_2\Big)\ge0 ,
\]
so \eqref{eqcoop} is a cooperative system on $\mathcal R$, and $F_1,F_2$ are
locally Lipschitz. Note that $(u^*,\frac{a_2}{c_2}-v^*)$ is the unique zero of
$(F_1(\cdot),F_2)$ in $\mathcal R$ with both components positive when $A\equiv a_1$.

\subsection{Existence}

\begin{lemma}\label{lm:supersub}
	Let $0<c\le c_0$ and $L\ge0$. Then
	\[
	\overline\Theta:=\Big(u^*,\ \frac{a_2}{c_2}-v^*\Big),
	\qquad
	\underline\Theta:=\Big(\Psi_{c_0},\ \frac{a_2}{c_2}-\Phi_{c_0}\Big)
	\]
	are, respectively, an upper and a lower solution of \eqref{eqcoop} on
	$(-\infty,L)\times \mathbb{R}$, and $\underline\Theta\le\overline\Theta$ componentwise.
\end{lemma}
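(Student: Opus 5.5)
The plan is to verify the differential inequalities for both pairs directly, using the equations for $u^*,v^*$ and for $(\Psi_{c_0},\Phi_{c_0})$. The sign of $A(x)-a_1$ and of $c-c_0$ does all the work, and the only non-smooth point needing care is the kink of $\Psi_{c_0}$ at $x=0$. Throughout, a lower (resp. upper) solution of \eqref{eqcoop} is one for which the left-hand sides of \eqref{eqcoop} are $\ge0$ (resp. $\le0$), the first on $(-\infty,L)$ and the second on $\mathbb{R}$, in the weak (viscosity) sense at corners. As boundary condition at $x=L$ we take $\underline\Theta_1(L)\le0\le\overline\Theta_1(L)$.

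\emph{Upper solution.} Since $\overline\Theta$ is constant, only $F_i(\overline\Theta)$ needs checking. Using $\frac{a_2c_1}{c_2}-c_1\big(\frac{a_2}{c_2}-v^*\big)=c_1v^*$ together with $a_1=b_1u^*+c_1v^*$, we get
\[
F_1(x,\overline\Theta)=(A(x)-b_1u^*-c_1v^*)u^*=(A(x)-a_1)u^*\le0,
\]
because $A\le a_1$ by \eqref{A} and the monotonicity of $A$. Similarly $F_2(\overline\Theta)=(b_2u^*+c_2v^*-a_2)v^*=0$. Finally $\overline\Theta_1(L)=u^*\ge0$, so $\overline\Theta$ is an upper solution.

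\emph{Lower solution.} Write $(\Psi,\Phi)=(\Psi_{c_0},\Phi_{c_0})$, which solves \eqref{semiwave0} with speed $c_0$ by \cref{th:semi-wave0}. Consider first $x<0$. There $A(x)=a_1$, and subtracting the $\Psi$-equation gives
\[
d_1\Psi''+c\Psi'+F_1\big(x,\underline\Theta\big)=(c-c_0)\Psi'+(A(x)-a_1)\Psi=(c-c_0)\Psi'\ge0,
\]
since $c\le c_0$ and $\Psi'<0$. On $(0,L)$ we have $\Psi\equiv0$, so the expression vanishes. At $x=0$ the left derivative is $\Psi'(0^-)<0$ and the right derivative is $0$. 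Thus $\underline\Theta_1=\max\{\Psi,0\}$ has a convex corner there, which is admissible for a lower solution (the usual ``maximum of two lower solutions'' argument). For the second component, $\underline\Theta_2=\frac{a_2}{c_2}-\Phi$ satisfies
\[
F_2(\underline\Theta)=(b_2\Psi-a_2+c_2\Phi)\Phi=-(a_2-b_2\Psi-c_2\Phi)\Phi .
\]
Combining this with the $\Phi$-equation gives
\[
d_2\underline\Theta_2''+c\underline\Theta_2'+F_2(\underline\Theta)=(c_0-c)\Phi'\ge0
\]
on all of $\mathbb{R}$, because $\Phi'>0$. Finally $\underline\Theta_1(L)=\Psi(L)=0$ since $L\ge0$, so $\underline\Theta$ is a lower solution.

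\emph{Ordering.} Since $\Psi$ decreases from $u^*$ to $0$, we have $0\le\Psi\le u^*$. Since $\Phi$ increases from $v^*$ to $a_2/c_2$, we have $0\le\frac{a_2}{c_2}-\Phi\le\frac{a_2}{c_2}-v^*$. Hence $\underline\Theta\le\overline\Theta$, and both pairs lie in the rectangle $\mathcal R$ where \eqref{eqcoop} is cooperative.

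The only delicate point is the corner of $\underline\Theta_1$ at $x=0$. I would handle it by noting that the jump $\Psi'(0^+)-\Psi'(0^-)=-\Psi'(0^-)>0$ has the correct sign, so $\underline\Theta_1$ cannot be touched from above by a smooth function at $x=0$ in a way that violates the inequality. This is the standard justification used in the later monotone-iteration and sub/super-solution argument.
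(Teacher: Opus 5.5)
Your proposal is correct and matches the paper's proof essentially step for step. You get the upper solution from $F_1(x,\overline\Theta)=(A(x)-a_1)u^*\le0$ and $F_2(\overline\Theta)=0$. You get the lower solution from the residuals $(c-c_0)\Psi_{c_0}'\ge0$ and $(c_0-c)\Phi_{c_0}'\ge0$ of the $c_0$-semi-wave equations, with the convex corner of $\Psi_{c_0}$ at $x=0$ declared admissible. The only difference is that the paper justifies this corner by citing the weak lower solution notion of \cite[Definition 2.4]{MR3986328}, whereas you give the viscosity / maximum-of-lower-solutions justification yourself.
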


\begin{proof}
	Since $\Psi_{c_0}'<0$ on $(-\infty,0]$, $\Psi_{c_0}(-\infty)=u^*$ and
	$\Psi_{c_0}\equiv0$ on $[0,\infty)$, we have $0\le\Psi_{c_0}<u^*$; since
	$\Phi_{c_0}'>0$, $\Phi_{c_0}(-\infty)=v^*$ and $\Phi_{c_0}(+\infty)=a_2/c_2$,
	we have $v^*<\Phi_{c_0}<a_2/c_2$. Hence
	$\underline\Theta\le\overline\Theta$ and both take values in $\mathcal R$.

	For $\overline\Theta$ we compute, using $A(x)\le a_1$,
	$a_1-b_1u^*-c_1v^*=0$ and $a_2-b_2u^*-c_2v^*=0$,
	\[
	F_1\Big(x,\ u^*,\ \frac{a_2}{c_2}-v^*\Big)=\big(A(x)-b_1u^*-c_1v^*\big)u^*
	=\big(A(x)-a_1\big)u^*\le0,
	\qquad F_2\Big(u^*,\frac{a_2}{c_2}-v^*\Big)=0 ,
	\]
	so that $d_i\overline\Theta_i''+c\overline\Theta_i'+F_i\le0$: $\overline\Theta$
	is an upper solution.

	For $\underline\Theta$, recall that $(\Psi_{c_0},\Phi_{c_0})$ solves
	\eqref{semiwave0} with $c$ replaced by $c_0$. Hence, for $x<0$,
	\[
	d_1\Psi_{c_0}''+c\Psi_{c_0}'
	= d_1\Psi_{c_0}''+c_0\Psi_{c_0}'+(c-c_0)\Psi_{c_0}'
	= -\big(a_1-b_1\Psi_{c_0}-c_1\Phi_{c_0}\big)\Psi_{c_0}+(c-c_0)\Psi_{c_0}' ,
	\]
	and since $c\le c_0$ and $\Psi_{c_0}'<0$ we get $(c-c_0)\Psi_{c_0}'\ge0$;
	using also $A(x)=a_1$ for $x\le0$ this gives
	$d_1\underline\Theta_1''+c\underline\Theta_1'+F_1(x,\underline\Theta)\ge0$
	on $(-\infty,0)$. On $(0,L)$ we have $\underline\Theta_1\equiv0$ and both
	sides vanish; the function $\underline\Theta_1=\Psi_{c_0}$ has a corner at
	$x=0$ with $\Psi_{c_0}'(0^-)<0=\Psi_{c_0}'(0^+)$, so $\underline\Theta_1$ is a
	weak lower solution in the sense of \cite[Definition 2.4]{MR3986328}, which is
	admissible for the monotone iteration scheme. Similarly, for $x\in \mathbb{R}$,
	\[
	d_2\Big(\frac{a_2}{c_2}-\Phi_{c_0}\Big)''+c\Big(\frac{a_2}{c_2}-\Phi_{c_0}\Big)'
	= -\big[d_2\Phi_{c_0}''+c_0\Phi_{c_0}'\big]-(c-c_0)\Phi_{c_0}'
	= \big(a_2-b_2\Psi_{c_0}-c_2\Phi_{c_0}\big)\Phi_{c_0}-(c-c_0)\Phi_{c_0}' ,
	\]
	and $-(c-c_0)\Phi_{c_0}'\ge0$ because $c\le c_0$ and $\Phi_{c_0}'>0$; since
	$\big(a_2-b_2\Psi_{c_0}-c_2\Phi_{c_0}\big)\Phi_{c_0}
	=-F_2(\underline\Theta)$, we obtain
	$d_2\underline\Theta_2''+c\underline\Theta_2'+F_2(\underline\Theta)\ge0$.
\end{proof}

\begin{proof}[Proof of \cref{th:semi-wave}(i), existence]
	Fix $L\ge0$. For $n>L$ consider the truncated problem
	\[
	\begin{cases}
		d_1\Theta_1''+c\Theta_1'+F_1(x,\Theta)=0, & -n<x<L,\\
		d_2\Theta_2''+c\Theta_2'+F_2(\Theta)=0, & -n<x<n,\\
		\Theta_1(-n)=\overline\Theta_1(-n),\quad \Theta_1(x)=0\ \ (L\le x\le n),\\
		\Theta_2(-n)=\overline\Theta_2(-n),\quad \Theta_2(n)=\overline\Theta_2(n).
	\end{cases}
	\]
	Since \eqref{eqcoop} is cooperative on $\mathcal R$ and, by
	\cref{lm:supersub}, $\underline\Theta\le\overline\Theta$ are ordered weak
	lower and upper solutions, the standard monotone iteration scheme for
	quasimonotone elliptic systems converges to a solution $\Theta^n$ with
	$\underline\Theta\le\Theta^n\le\overline\Theta$ on the truncated domain.
	Schauder estimates give bounds on $\Theta^n$ in $C^{2,\alpha}$ over compact
	subsets that are independent of $n$, so a diagonal extraction yields a
	classical solution $\Theta=(\Theta_1,\Theta_2)$ of \eqref{eqcoop} on
	$(-\infty,L]\times \mathbb{R}$ with
	\begin{equation}\label{s0}
		\Psi_{c_0}(x) \le \psi_L(x) \le u^*,
		\qquad
		v^* \le \varphi_L(x) \le \Phi_{c_0}(x) < \frac{a_2}{c_2}
		\qquad (x\in \mathbb{R}),
	\end{equation}
	where $(\psi_L,\varphi_L)$ is recovered from $\Theta$ through
	\eqref{cooper}. In particular
	\begin{equation}\label{s1}
		\psi_L(-\infty) = u^*, \qquad \varphi_L(-\infty) = v^* .
	\end{equation}
	Moreover $\psi_L>0$ on $(-\infty,L)$: this holds on $(-\infty,0)$ because
	$\psi_L\ge\Psi_{c_0}>0$ there, and on $[0,L)$ because otherwise the strong
	maximum principle applied to the $\psi$-equation would force $\psi_L\equiv0$
	near an interior zero and hence on $(-\infty,L)$ by unique continuation,
	contradicting $\psi_L\ge\Psi_{c_0}>0$ on $(-\infty,0)$. By the Hopf boundary
	lemma, $\psi_L'(L)<0$. Similarly, $\varphi_L>v^*$ on $\mathbb{R}$ by the strong
	maximum principle.

	It remains to prove the monotonicity statements in \eqref{eqlm1} and
	$\varphi_L(+\infty)=a_2/c_2$; this is done in
	Lemmas~\ref{lm:phi_infty}--\ref{lm:mono} below.
\end{proof}

\begin{lemma}\label{lm:phi_infty}
	$\varphi_L'>0$ on $(L,+\infty)$ and $\varphi_L(+\infty)=\frac{a_2}{c_2}$.
\end{lemma}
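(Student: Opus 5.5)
On $(L,+\infty)$ we have $\psi_L\equiv0$, so $\varphi=\varphi_L$ solves the scalar equation
\[
-c\varphi'-d_2\varphi''=(a_2-c_2\varphi)\varphi ,\qquad x>L,
\]
with $v^*<\varphi<a_2/c_2$ by \eqref{s0}. The plan is to read this as a planar ODE in the phase variables $(\varphi,\varphi')$. Set $w:=a_2/c_2-\varphi\in(0,a_2/c_2-v^*)$. Then
\[
d_2w''+cw'=c_2w\Big(\frac{a_2}{c_2}-w\Big)\ge0 ,
\]
and since $w>0$ the right-hand side is in fact strictly positive.

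\emph{Limit at $+\infty$.} The equation gives $(e^{cx/d_2}w')'>0$, so $e^{cx/d_2}w'$ is strictly increasing on $(L,\infty)$. Suppose $w'(x_1)>0$ for some $x_1>L$. Then $w'(x)\ge w'(x_1)e^{-c(x-x_1)/d_2}$ for $x>x_1$, but this only bounds the decay of $w'$. A cleaner route is the energy identity. Multiply by $w'$ and write
\[
E:=\frac{d_2}{2}(w')^2-G(w),\qquad G(w):=\int_0^w c_2s\Big(\frac{a_2}{c_2}-s\Big)\,ds .
\]
Then $E'=-c(w')^2\le0$. Because $w$ is bounded and $w''$ is bounded (by Schauder estimates), $w'\to0$ along sequences where $w$ has a limit. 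The standard argument for bounded solutions of damped autonomous second order ODEs ($c>0$) is: $\int_L^\infty(w')^2<\infty$ since $E$ is bounded below, and $w''$ is bounded, so $w'\to0$. Then $w''\to$ the limit of $c_2w(a_2/c_2-w)/d_2$, which must be $0$ since $w'$ tends to $0$. Hence $w(+\infty)\in\{0,a_2/c_2\}$, and since $w<a_2/c_2-v^*$ we get $w(+\infty)=0$, i.e. $\varphi_L(+\infty)=a_2/c_2$.

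\emph{Monotonicity.} The quantity $g(x):=e^{cx/d_2}w'(x)$ is strictly increasing. If $w'(x_1)\ge0$ at some $x_1>L$, then $w'>0$ on $(x_1,\infty)$, so $w$ is increasing there with $w(x)>w(x_1)>0$ for $x>x_1$, contradicting $w(+\infty)=0$. Therefore $w'<0$, i.e. $\varphi_L'>0$, on $(L,\infty)$.

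The only delicate point is justifying $w'\to0$. I would rely on the uniform $C^2$ bounds that come from the construction (Schauder estimates on the bounded solution) together with the integrability of $(w')^2$ given by the energy identity. As an alternative, one can use that $g$ is monotone, so $w'$ has a definite sign for large $x$; then $w$ is eventually monotone and bounded, hence converges, and $w''$, $w'\to0$ follow by Barbalat's lemma.
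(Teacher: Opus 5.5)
Your proof is correct and reaches the result by a different route from the paper, though the energy-based limit step has one unstated point.

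\textbf{What the paper does.} It starts with the monotone quantity $e^{cx/d_2}\varphi_L'$, which is your $-g$. If $\varphi_L'(x_1)\le 0$ for some $x_1>L$, then $\varphi_L$ is strictly decreasing on $(x_1,\infty)$. It therefore converges to some $m\in[v^*,a_2/c_2)$. Passing to the limit along a sequence with $\varphi_L',\varphi_L''\to0$ gives $(a_2-c_2m)m=0$, which is impossible. Hence $\varphi_L'>0$, and the same limiting argument then identifies $\varphi_L(+\infty)=a_2/c_2$.

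\textbf{What you do differently.} You reverse the order. You identify the limit first, using the Lyapunov function $E=\frac{d_2}{2}(w')^2-G(w)$. Only then do you derive monotonicity, from $w(+\infty)=0$ together with the monotonicity of $g$; that last step is clean and correct.

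\textbf{Comparison.} The energy route is more general: it gives convergence to an equilibrium for any bounded solution of a damped autonomous scalar equation, with no prior sign information. The paper's route is shorter and uses only the sign of $(a_2-c_2\varphi)\varphi$ on the range of $\varphi_L$. It also does not rely on the dissipation $c>0$ (it would work verbatim for $c=0$), whereas your integrability of $(w')^2$ does.

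\textbf{The missing step.} You pass from $w'\to0$ to ``$w''\to$ the limit of $c_2w(a_2/c_2-w)/d_2$''. This presupposes that $w$ converges, which you have not shown at that point. It is a one-line fix:
\begin{itemize}
\item $E$ is nonincreasing and bounded below, hence convergent.
\item So $G(w(x))=\frac{d_2}{2}w'(x)^2-E(x)$ converges.
\item $G$ is continuous and strictly increasing on $[0,a_2/c_2]$, since $G'(s)=s(a_2-c_2s)>0$ for $0<s<a_2/c_2$. Hence $w(x)\to w_\infty$.
\item Then $d_2w''=c_2w(a_2/c_2-w)-cw'\to c_2w_\infty(a_2/c_2-w_\infty)$, and this limit must vanish because $w'\to0$.
\end{itemize}

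Your fallback route avoids this issue: the eventual sign of $w'$ from the monotonicity of $g$, then convergence of $w$, then Barbalat's lemma. That route is essentially the paper's argument. In a write-up you should also drop the abandoned first attempt and the sentence about ``sequences where $w$ has a limit''.
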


\begin{proof}
	For $x>L$ we have $\psi_L(x)=0$, hence
	\begin{equation}\label{s3}
		-c \varphi_L' - d_2 \varphi_L'' = (a_2 - c_2\varphi_L)\varphi_L > 0
		\qquad (x>L),
	\end{equation}
	because $v^*\le\varphi_L<a_2/c_2$ by \eqref{s0}. Equivalently
	$d_2\varphi_L''+c\varphi_L'<0$, and multiplying by
	$\frac{1}{d_2}e^{\frac{c}{d_2}x}$ gives
	\[
	\Big(e^{\frac{c}{d_2}x}\varphi_L'\Big)'
	=\frac{1}{d_2}e^{\frac{c}{d_2}x}\big(d_2\varphi_L''+c\varphi_L'\big)<0
	\qquad (x>L),
	\]
	so $x\mapsto e^{\frac{c}{d_2}x}\varphi_L'(x)$ is strictly decreasing on
	$(L,+\infty)$.

	Suppose $\varphi_L'(x_1)\le0$ for some $x_1>L$. Then
	$e^{\frac{c}{d_2}x}\varphi_L'(x)<e^{\frac{c}{d_2}x_1}\varphi_L'(x_1)\le0$
	for every $x>x_1$, so $\varphi_L'<0$ on $(x_1,\infty)$ and $\varphi_L$ is
	strictly decreasing there; being bounded below by $v^*>0$, it has a limit
	$m\in[v^*,\varphi_L(x_1))\subset(0,a_2/c_2)$. By \eqref{s0} and elliptic
	estimates, $\varphi_L$ is bounded in $C^{2,\alpha}$ on $(L+1,\infty)$, whence
	$\varphi_L'(x)\to0$ and $\varphi_L''(x)\to0$ along a sequence $x\to\infty$.
	Passing to the limit in \eqref{s3} gives $(a_2-c_2m)m=0$, which is impossible
	since $0<m<a_2/c_2$. Therefore $\varphi_L'>0$ on $(L,+\infty)$, $\varphi_L$ is
	increasing and bounded there, and the same limiting argument applied to its
	limit $m\in(0,a_2/c_2]$ gives $(a_2-c_2m)m=0$, i.e. $m=\frac{a_2}{c_2}$.
\end{proof}

\begin{lemma}\label{lm:asym}
	Let $(\psi,\varphi)$ be any solution of \eqref{eqlm1}, and write
	$\psi_L,\varphi_L$ for it. Let $\lambda_1$ be the smallest positive root of
	\begin{equation}\label{charac}
		\Lambda(\lambda):=\big(d_1\lambda^2+c\lambda-b_1u^*\big)\big(d_2\lambda^2+c\lambda-c_2v^*\big)-b_2c_1u^*v^* .
	\end{equation}
	Then $\Lambda$ has exactly two positive roots $0<\lambda_1<\lambda_2$, and
	there are constants $k_1,k_2>0$ (depending on $L$) such that, as
	$x\to-\infty$,
	\begin{equation}\label{asym}
		u^*-\psi_L(x)=\big(k_1+o(1)\big)e^{\lambda_1x},
		\qquad
		\varphi_L(x)-v^*=\big(k_2+o(1)\big)e^{\lambda_1x}.
	\end{equation}
	In particular $\lambda_1$ does not depend on $L$.
\end{lemma}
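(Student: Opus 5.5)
The proof is a standard linearisation at the hyperbolic equilibrium $(u^*,v^*)$ of the traveling-wave ODE, carried out in three steps.

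\emph{Step 1: the roots of $\Lambda$.} Write $P_1(\lambda)=d_1\lambda^2+c\lambda-b_1u^*$ and $P_2(\lambda)=d_2\lambda^2+c\lambda-c_2v^*$, so that $\Lambda=P_1P_2-b_2c_1u^*v^*$. We have
\[
\Lambda(0)=b_1c_2u^*v^*-b_2c_1u^*v^*=(b_1c_2-b_2c_1)u^*v^*>0
\]
by \eqref{weak_competition}. Let $r_i>0$ be the positive root of $P_i$. Then at $\lambda=\min\{r_1,r_2\}$ one factor of $P_1P_2$ vanishes, so $\Lambda<0$ there. Also $\Lambda\to+\infty$ as $\lambda\to+\infty$. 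Hence $\Lambda$ has at least two positive roots. Since $\Lambda$ is a quartic, to show there are exactly two positive roots it is enough to show that $\Lambda>0$ on $(-\infty,0]$ cannot hold together with a third positive root.

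On $\{\lambda>0:P_1P_2>0\}$ both $P_i$ have the same sign. Where both are positive, $P_1P_2$ is increasing (each factor is positive and increasing), so there is exactly one root $\lambda_2$ with $\lambda_2>\max\{r_1,r_2\}$. Where both are negative, i.e. on $(0,\min\{r_1,r_2\})$, each $|P_i|$ is decreasing, so $P_1P_2$ is strictly decreasing from $b_1c_2u^*v^*$ down to $0$. This gives exactly one root $\lambda_1$ in that interval. Between $\min\{r_1,r_2\}$ and $\max\{r_1,r_2\}$ we have $P_1P_2\le0<b_2c_1u^*v^*$, so there are no roots. Hence there are exactly two positive roots, with $\lambda_1<\min\{r_1,r_2\}$.

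\emph{Step 2: the linear system.} Set $p=u^*-\psi_L$ and $q=\varphi_L-v^*$. By \eqref{s0} and \eqref{s1}, $p,q\to0$ as $x\to-\infty$. For $x\le0$ we have $A=a_1$, and using the equilibrium identities the equations become
\[
d_1p''+cp'=b_1u^*p-c_1u^*q+O(p^2+q^2),\qquad d_2q''+cq'=-b_2v^*p+c_2v^*q+O(p^2+q^2).
\]
This is a four-dimensional first-order system $Y'=MY+O(|Y|^2)$ with $Y=(p,p',q,q')$. An exponential ansatz $e^{\lambda x}$ for the linear part yields exactly $\Lambda(\lambda)=0$.

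The other two roots of $\Lambda$ have negative real part: there is one negative root in $(-\infty,0)$ by $\Lambda(0)>0$, and the quartic structure makes the remaining ones either negative or a complex pair. Since $\Lambda(0)\ne0$ and an imaginary root $i\omega$ is excluded by checking the imaginary part, the equilibrium is hyperbolic. The unstable subspace is spanned by the eigenvectors for $\lambda_1$ and $\lambda_2$. For $\lambda_1$ the eigenvector has $q$-component proportional to $P_1(\lambda_1)/(-c_1u^*)\cdot(-1)$, which is positive because $P_1(\lambda_1)<0$; so its $p$- and $q$-components have the same sign.

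\emph{Step 3: the decay rate and the sign of $k_1$.} Since $Y\to0$ as $x\to-\infty$, the orbit lies on the unstable manifold. Standard asymptotics (Hartman–Grobman/Coddington–Levinson, with distinct real eigenvalues) give $Y(x)=\alpha e^{\lambda_1x}(e_1+o(1))+\beta e^{\lambda_2x}(e_2+o(1))$.

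The main obstacle is to show $\alpha\ne0$ and that the coefficients $k_1,k_2$ are positive. Suppose $\alpha=0$. Then $Y\sim\beta e^{\lambda_2x}e_2$. For $\lambda_2>\max\{r_1,r_2\}$ both $P_i(\lambda_2)>0$, which forces the components of $e_2$ to have opposite signs. So $p$ and $q$ would have opposite signs near $-\infty$, contradicting $p\ge0$ and $q\ge0$ from \eqref{s0}. (If $p=0$ identically near $-\infty$, the strong maximum principle applied to the comparison $\psi_L\le u^*$ contradicts $\psi_L\to$ its positive profile.) Therefore $\alpha\ne0$. Because the components of $e_1$ have the same sign and $p,q\ge0$, we get $k_1,k_2>0$, and \eqref{asym} follows. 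Finally, $\lambda_1$ is defined only through $\Lambda$, which does not involve $L$, so $\lambda_1$ is independent of $L$.
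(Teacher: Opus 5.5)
Your overall route is the paper's: locate the positive roots of $\Lambda$, linearise at $(u^*,v^*)$, place the orbit on the two-dimensional unstable manifold, and rule out the $\lambda_2$-mode because its eigenvector has components of opposite signs. Your Step 1 is a valid alternative root count. Strict monotonicity of $P_1P_2$ on $(0,\min\{r_1,r_2\})$ and on $(\max\{r_1,r_2\},\infty)$ even gives $\lambda_1<\min\{r_1,r_2\}$ and $\lambda_2>\max\{r_1,r_2\}$. The paper instead evaluates $\Lambda$ at both roots of $P_1$ and so locates all four roots at once.

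That difference matters, because your Step 2 does not actually establish hyperbolicity. $\Lambda(0)>0$ and $\Lambda(-\infty)=+\infty$ give no sign change on $(-\infty,0)$. Excluding purely imaginary roots also does not exclude a complex pair with positive real part, which would enlarge the unstable subspace and could change the leading asymptotics. The fix is the paper's observation that $\Lambda=-b_2c_1u^*v^*<0$ at the negative root of $P_1$, which produces two negative real roots. Alternatively, by Vieta the remaining two roots have positive product $\Lambda(0)/(d_1d_2\lambda_1\lambda_2)$ and sum $-c(d_1+d_2)/(d_1d_2)-\lambda_1-\lambda_2<0$, so they have negative real parts.

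The second point is scope. The lemma is asserted for an arbitrary solution of \eqref{eqlm1}, and it is used in that generality in \cref{lm:uniq} for two a priori unrelated solutions. The bounds \eqref{s0} you invoke for $p,q\ge0$ are known only for the solution built by monotone iteration. Read the signs off \eqref{eqlm1} instead: $\psi_L'<0$ with $\psi_L(-\infty)=u^*$, and $\varphi_L'>0$ with $\varphi_L(-\infty)=v^*$, give $p>0$ and $q>0$ strictly. This also disposes of the degenerate case your parenthetical remark tries to handle.

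Finally, there is a sign slip in the eigenvector computation. From $P_1(\lambda)\xi_1+c_1u^*\xi_2=0$ one gets $\xi_2/\xi_1=-P_1(\lambda_1)/(c_1u^*)>0$. The expression you wrote equals $P_1(\lambda_1)/(c_1u^*)<0$. Your conclusion that the components of the $\lambda_1$-eigenvector have the same sign is nevertheless correct.
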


\begin{proof}
	Write $p(\lambda):=d_1\lambda^2+c\lambda-b_1u^*$ and
	$q(\lambda):=d_2\lambda^2+c\lambda-c_2v^*$, and let
	$\alpha_-<0<\alpha_+$ be the roots of $p$. Then
	$\Lambda(0)=b_1c_2u^*v^*-b_2c_1u^*v^*=u^*v^*(b_1c_2-b_2c_1)>0$ by
	\eqref{weak_competition}, $\Lambda(\alpha_\pm)=-b_2c_1u^*v^*<0$ and
	$\Lambda(\pm\infty)=+\infty$. Hence $\Lambda$ has one root in
	$(0,\alpha_+)$, one in $(\alpha_+,+\infty)$, one in $(\alpha_-,0)$ and one in
	$(-\infty,\alpha_-)$: exactly two positive roots
	$0<\lambda_1<\alpha_+<\lambda_2$.

	Set $\hat\psi:=u^*-\psi_L$ and $\hat\varphi:=\varphi_L-v^*$. They tend
	to $0$ as $x\to-\infty$ because $\psi_L(-\infty)=u^*$ and
	$\varphi_L(-\infty)=v^*$ in \eqref{eqlm1}, and they are positive because
	\eqref{eqlm1} also requires $\psi_L'<0$ on $(-\infty,L]$ and
	$\varphi_L'>0$ on $\mathbb{R}$. \textup{(}For the solution produced in
	part \textup{(i)} the positivity is also immediate from \eqref{s0}.\textup{)} Since
	$A\equiv a_1$ on $(-\infty,0]$ and
	$a_1-b_1\psi_L-c_1\varphi_L=b_1\hat\psi-c_1\hat\varphi$,
	$a_2-b_2\psi_L-c_2\varphi_L=b_2\hat\psi-c_2\hat\varphi$, the pair
	$(\hat\psi,\hat\varphi)$ satisfies, on $(-\infty,0]$,
	\begin{equation}\label{lin}
		\begin{cases}
			d_1\hat\psi''+c\hat\psi'-b_1u^*\hat\psi+c_1u^*\hat\varphi
			=-\big(b_1\hat\psi-c_1\hat\varphi\big)\hat\psi ,\\[2pt]
			d_2\hat\varphi''+c\hat\varphi'+b_2v^*\hat\psi-c_2v^*\hat\varphi
			=-\big(b_2\hat\psi-c_2\hat\varphi\big)\hat\varphi ,
		\end{cases}
	\end{equation}
	whose right hand sides are quadratic in $(\hat\psi,\hat\varphi)$. The
	characteristic equation of the linear part is exactly $\Lambda(\lambda)=0$, so
	the equilibrium $(\hat\psi,\hat\psi',\hat\varphi,\hat\varphi')=(0,0,0,0)$ of
	the associated four-dimensional first order system is hyperbolic, with a
	two-dimensional unstable manifold (corresponding to $\lambda_1,\lambda_2>0$)
	and a two-dimensional stable manifold. Since $(\hat\psi,\hat\varphi)\to0$ as
	$x\to-\infty$, the solution lies on the unstable manifold, and by the theory
	of asymptotic integration \cite[Chapter 3, Theorem 8.1]{MR69338} (see also
	\cite{MR658490} for the degenerate cases) there are constants
	$\alpha_1,\alpha_2$, not both $0$, and eigenvectors $\xi^{(j)}$ of the linear
	part associated with $\lambda_j$, such that
	\[
	(\hat\psi,\hat\varphi)(x)=\alpha_1\xi^{(1)}e^{\lambda_1x}(1+o(1))
	\quad\text{or}\quad
	(\hat\psi,\hat\varphi)(x)=\alpha_2\xi^{(2)}e^{\lambda_2x}(1+o(1))
	\quad\text{ as }x\to-\infty,
	\]
	according to whether $\alpha_1\ne0$ or $\alpha_1=0$. From the first line of
	the linear system, an eigenvector associated with $\lambda$ satisfies
	$p(\lambda)\xi_1+c_1u^*\xi_2=0$, i.e. $\xi_2=-p(\lambda)\xi_1/(c_1u^*)$.
	Since $\lambda_1<\alpha_+$ we have $p(\lambda_1)<0$, so $\xi^{(1)}$ has two
	components of the same sign; since $\lambda_2>\alpha_+$ we have
	$p(\lambda_2)>0$, so $\xi^{(2)}$ has components of opposite signs. As
	$\hat\psi>0$ and $\hat\varphi>0$, the second alternative is impossible.
	Therefore $\alpha_1\ne0$ and \eqref{asym} holds with $k_1,k_2>0$.
\end{proof}

\begin{lemma}\label{lm:mono}
	$\psi_L'<0$ on $(-\infty,L]$ and $\varphi_L'>0$ on $\mathbb{R}$.
\end{lemma}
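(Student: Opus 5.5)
The plan is a sliding argument in the cooperative variables $\Theta=(\Theta_1,\Theta_2)=(\psi_L,\frac{a_2}{c_2}-\varphi_L)$ of \eqref{cooper}. Both monotonicity claims are equivalent to $\Theta_1$ and $\Theta_2$ being decreasing.

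For $s>0$ set $\Theta^s(x):=\Theta(x-s)$. Since $A$ is nonincreasing on $\mathbb R$, we have $A(x-s)\ge A(x)$. Hence on $(-\infty,L)$
\[
d_1(\Theta^s_1)''+c(\Theta^s_1)'+F_1(x,\Theta^s)=\big(A(x)-A(x-s)\big)\Theta^s_1\le0 ,
\]
and the $\Theta_2$-equation is translation invariant. So $\Theta^s$ is an upper solution of \eqref{eqcoop} on $(-\infty,L)\times\mathbb R$. At the right end of the $\Theta_1$-domain we have $\Theta^s_1(L)=\psi_L(L-s)>0=\Theta_1(L)$. Moreover all values stay in $\mathcal R$ by \eqref{s0}, so the system is cooperative along the comparison. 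The goal is to show $\Theta^s\ge\Theta$ for every $s>0$, and then upgrade this to strict monotonicity.

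\textbf{Step 1 (large $s$).} Use \cref{lm:asym}, which applies to the solution from part (i) because positivity of $u^*-\psi_L$ and $\varphi_L-v^*$ follows from \eqref{s0} and the strong maximum principle. It gives $u^*-\Theta_1(x)=(k_1+o(1))e^{\lambda_1x}$ and $\Theta_2(x)-(\frac{a_2}{c_2}-v^*)=-(k_2+o(1))e^{\lambda_1x}$. Fix $\varepsilon$ small and $M$ large. For $x\le -M$ and $s\ge s_0$, where $e^{-\lambda_1 s_0}(k_i+\varepsilon)<k_i-\varepsilon$, the shifted profile is then closer to the equilibrium, so $\Theta^s\ge\Theta$ there. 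On $[-M,L]$ we have $\Theta_1<u^*$ and $\Theta_2<\frac{a_2}{c_2}-v^*$ strictly, by the strong maximum principle, while $\Theta^s\to\overline\Theta$ uniformly on $[-M,L]$ as $s\to\infty$. This gives the ordering there. On $[L,\infty)$ the first component is trivial. For the second component I would handle large $x$ by a maximum principle near the stable state $\Theta_2=0$, where the linearized coefficient is $-a_2<0$ and both $\Theta_2$ and $\Theta^s_2$ tend to $0$. Compactness covers the middle part.

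\textbf{Step 2 (sliding down).} Let $s^*:=\inf\{s>0:\Theta^{s'}\ge\Theta \text{ for all } s'\ge s\}$ and suppose $s^*>0$. Then $\Theta^{s^*}\ge\Theta$, and $\Theta^{s^*}\not\equiv\Theta$ because of the strict inequality at $x=L$. The strong maximum principle for the linear cooperative system satisfied by $\Theta^{s^*}-\Theta$ gives strict inequality on every compact set. By continuity, $\Theta^{s}\ge\Theta$ on a fixed compact $[-M,M']$ for $s\in(s^*-\delta,s^*]$.

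The main obstacle is the two unbounded ends, where the difference tends to $0$ and no uniform gap is available. At $-\infty$ both profiles are close to $\overline\Theta$. The Jacobian of $(F_1,F_2)$ at $\overline\Theta$ with $A=a_1$ is cooperative and stable, its determinant being $u^*v^*(b_1c_2-b_2c_1)>0$. Hence there is a positive vector $e$ with $Je<0$, and by continuity the linear system for $w=\Theta^s-\Theta$ on $(-\infty,-M]$ has coefficients whose matrix still maps $e$ to a negative vector. Applying a maximum principle to $w/e$ componentwise, as in the proof of \cref{th7}, excludes a negative minimum in $(-\infty,-M]$ given $w\ge0$ at $-M$ and $w\to0$ at $-\infty$. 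The right end $[M',\infty)$ is handled the same way using the scalar equation for $\Theta_2$ near $0$. This yields $\Theta^s\ge\Theta$ for $s$ slightly below $s^*$, contradicting the definition of $s^*$. Hence $s^*=0$.

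\textbf{Step 3 (strictness).} From $\Theta^s\ge\Theta$ for all $s>0$, both $\psi_L$ and $\Theta_2$ are nonincreasing. For each $s>0$ the strong maximum principle gives $\Theta^s>\Theta$ strictly on the interior of the domain, since they differ at $L$. I would obtain strict monotonicity of the derivatives by differentiating. Let $z:=-\Theta'\ge0$; it solves a linear cooperative system whose $z_1$-equation carries the extra nonnegative source $A'(x)\Theta_1\le0$ with the appropriate sign, understood weakly since $A$ is only Lipschitz. By the strong maximum principle, $z\not\equiv0$ then gives $z>0$ componentwise. This yields $\varphi_L'>0$ on $\mathbb R$, in agreement with \cref{lm:phi_infty} on $(L,\infty)$, and $\psi_L'<0$ on $(-\infty,L)$. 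Finally $\psi_L'(L)<0$ is the Hopf lemma already obtained in the existence proof.
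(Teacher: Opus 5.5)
Your proof is correct. It uses the same overall strategy as the paper: sliding in the cooperative variables, started by \cref{lm:asym} plus compactness, and finished with the strong maximum principle for the differentiated system. The shift direction is only a mirror image. The paper uses $\psi_L(x+s)$ as a \emph{lower} solution on the moving domain $(-\infty,L-s)$, which is equivalent to your right shift used as an upper solution on the fixed domain $(-\infty,L)$.

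The genuine difference is how the unbounded ends are controlled in the sliding-down step. The paper keeps all maximum-principle work on compact sets and gets strict ordering on the tails from explicit pointwise information:
\begin{itemize}
\item near $-\infty$, the two-sided exponential bounds of \cref{lm:asym}, which hold uniformly for $\sigma\in[\bar s/2,\bar s]$;
\item near $+\infty$, the bound $\varphi_L\le\Phi_{c_0}(L)$ on $(-\infty,L]$ together with the monotonicity of $\varphi_L$ on $(L,\infty)$ from \cref{lm:phi_infty}.
\end{itemize}
You instead run half-line maximum principles based on linear stability of the end states:
\begin{itemize}
\item at $-\infty$, the cooperative Jacobian at $\overline\Theta$ has determinant $u^*v^*(b_1c_2-b_2c_1)>0$ and negative trace, so it admits $e>0$ with $Je<0$, which excludes a negative interior minimum of $\min_i w_i/e_i$;
\item at $+\infty$, the coefficient $-a_2+c_2(\Theta_2^s+\Theta_2)$ is negative wherever $w_2<0$. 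This is the device the paper uses only in \cref{lm:uniq}.
\end{itemize}
One detail you leave implicit is that on $[M',L+s)$ the shifted first component $\psi_L(x-s)$ is still positive. There $\Theta_2^s$ is only a supersolution of the scalar $\Theta_2$-equation, which holds because $\partial_{\Theta_1}F_2\ge0$; this is enough.

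What your route buys is that it needs only convergence to the end states, not the rate. If you also use the half-line arguments in Step~1 (order on a compact set for large $s$, then on the tails), \cref{lm:asym} and its asymptotic-integration input become unnecessary for \cref{lm:mono}. The same holds for \cref{lm:uniq} and \cref{th:semi-wave}(ii), where the paper relies on the common exponent $\lambda_1$ to start the sliding between two different profiles. The paper's route, in exchange, gets strict inequalities on the tails directly from pointwise asymptotics, so the strong maximum principle is only ever applied on bounded sets.
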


\begin{proof}
	For $s>0$ set $\psi^s(x):=\psi_L(x+s)$ and $\varphi^s(x):=\varphi_L(x+s)$.
	Since $A$ is nonincreasing, $A(x+s)\le A(x)$, so on $(-\infty,L-s)$
	\[
	-c(\psi^{s})' - d_1(\psi^{s})'' = \big(A(x+s)-b_1\psi^{s}-c_1\varphi^{s}\big)\psi^{s}
	\le \big(A(x)-b_1\psi^{s}-c_1\varphi^{s}\big)\psi^{s},
	\]
	while $-c(\varphi^{s})'-d_2(\varphi^{s})''=(a_2-b_2\psi^{s}-c_2\varphi^{s})\varphi^{s}$
	on $\mathbb{R}$. In the cooperative variables \eqref{cooper}, this says that
	$\Theta^{s}:=(\psi^{s},\frac{a_2}{c_2}-\varphi^{s})$ is a lower solution of
	\eqref{eqcoop} on $(-\infty,L-s)\times \mathbb{R}$, while $\Theta$ is a solution.

	We use the sliding method. Let
	\[
	S:=\Big\{s>0:\ \psi^{\sigma}\le\psi_L \text{ on }(-\infty,L-\sigma]\ \text{ and }\
	\varphi^{\sigma}\ge\varphi_L \text{ on }\mathbb{R},\ \ \forall\sigma\ge s\Big\}.
	\]

	\emph{Strict inequalities near $-\infty$.} By \cref{lm:asym}, for every
	$\varepsilon\in(0,1)$ there exists $X_\varepsilon<0$ such that
	\begin{equation}\label{eq:two_sided}
		\begin{aligned}
			(1-\varepsilon)k_1e^{\lambda_1y}&\le u^*-\psi_L(y)\le (1+\varepsilon)k_1e^{\lambda_1y},\\
			(1-\varepsilon)k_2e^{\lambda_1y}&\le \varphi_L(y)-v^*\le (1+\varepsilon)k_2e^{\lambda_1y},
		\end{aligned}
		\qquad (y\le X_\varepsilon).
	\end{equation}
	If $\sigma>0$ and $\varepsilon$ satisfy $(1-\varepsilon)e^{\lambda_1\sigma}>1+\varepsilon$,
	then for $x\le X_\varepsilon-\sigma$ both $x$ and $x+\sigma$ lie below
	$X_\varepsilon$, and \eqref{eq:two_sided} gives
	\[
	u^*-\psi^{\sigma}(x)\ \ge\ (1-\varepsilon)k_1e^{\lambda_1\sigma}e^{\lambda_1x}
	\ >\ (1+\varepsilon)k_1e^{\lambda_1x}\ \ge\ u^*-\psi_L(x)
	\]
	and, in the same way, $\varphi^{\sigma}(x)-v^*>\varphi_L(x)-v^*$. Hence
	\begin{equation}\label{eq:strict_minus}
		\psi^{\sigma}<\psi_L \quad\text{and}\quad \varphi^{\sigma}>\varphi_L
		\qquad\text{ on } (-\infty,X_\varepsilon-\sigma]
	\end{equation}
	whenever $(1-\varepsilon)e^{\lambda_1\sigma}>1+\varepsilon$. Note that, given
	any $\sigma_*>0$, this condition holds for all $\sigma\ge\sigma_*$ as soon as
	$\varepsilon$ is small enough.

	\emph{$S\ne\emptyset$.} Take $\varepsilon=\frac12$, write $X:=X_{1/2}$ and let
	$\sigma\ge\sigma_1:=\lambda_1^{-1}\ln5$, so that \eqref{eq:strict_minus} holds
	on $(-\infty,X-\sigma]$. We cover the remaining ranges using that
	$\psi_L<u^*$ on $(-\infty,L]$, that $\varphi_L>v^*$ on $\mathbb{R}$, and that, by
	\eqref{s0} and \cref{lm:phi_infty}, $\varphi_L\le\Phi_{c_0}(L)<\frac{a_2}{c_2}$
	on $(-\infty,L]$ while $\varphi_L$ is increasing on $(L,+\infty)$ with
	$\varphi_L(+\infty)=\frac{a_2}{c_2}$. Fix $Z>L$ with
	$\varphi_L>\Phi_{c_0}(L)$ on $[Z,+\infty)$ and set
	$M_0:=\max_{[X,L]}\psi_L<u^*$ and $m_0:=\min_{[X,Z]}(\varphi_L-v^*)>0$.
	\begin{itemize}
		\item For $x\in[X-\sigma,\,L-\sigma]$ we have $x+\sigma\in[X,L]$, hence
		$\psi^\sigma(x)\le M_0$; since $x\le L-\sigma$ and $\psi_L(-\infty)=u^*>M_0$,
		we get $\psi^\sigma(x)<\psi_L(x)$ for $\sigma$ large. As
		$\psi^\sigma\equiv0$ on $[L-\sigma,+\infty)$, the $\psi$-inequality holds on
		all of $(-\infty,L-\sigma]$.
		\item For $x\in[X-\sigma,\,Z-\sigma]$ we have $x+\sigma\in[X,Z]$, hence
		$\varphi^\sigma(x)-v^*\ge m_0$, while
		$\varphi_L(x)-v^*\le\frac32k_2e^{\lambda_1(Z-\sigma)}<m_0$ for $\sigma$ large.
		\item For $x\in[Z-\sigma,\,L]$ we have $x+\sigma\ge Z$, hence
		$\varphi^\sigma(x)>\Phi_{c_0}(L)\ge\varphi_L(x)$.
		\item For $x\ge L$ we have $\varphi^\sigma(x)=\varphi_L(x+\sigma)>\varphi_L(x)$,
		since $\varphi_L$ is increasing on $(L,+\infty)$.
	\end{itemize}
	Hence $S\supset[s_1,+\infty)$ for $s_1$ large.

	\emph{$\bar s:=\inf S=0$.} By continuity, $\bar s\in S$. Suppose
	$\bar s>0$, and fix $\varepsilon\in(0,1)$ so small that
	$(1-\varepsilon)e^{\lambda_1\bar s/2}>1+\varepsilon$; write
	$X_*:=X_\varepsilon-\bar s$, so that, by \eqref{eq:strict_minus},
	\begin{equation}\label{eq:unif_minus}
		\psi^{\sigma}<\psi_L \quad\text{and}\quad \varphi^{\sigma}>\varphi_L
		\qquad\text{ on } (-\infty,X_*] \text{ for every } \sigma\in[\bar s/2,\bar s].
	\end{equation}
	Set $P:=\psi_L-\psi^{\bar s}\ge0$ on $(-\infty,L-\bar s]$ and
	$Q:=\varphi^{\bar s}-\varphi_L\ge0$ on $\mathbb{R}$. By the computation above and the
	mean value theorem, $(P,Q)$ satisfies a linear cooperative system
	\[
	d_1P''+cP'+\gamma_{11}P+\gamma_{12}Q\le 0,\qquad
	d_2Q''+cQ'+\gamma_{21}P+\gamma_{22}Q\le 0
	\]
	with bounded coefficients and $\gamma_{12},\gamma_{21}\ge0$. Since
	$P(L-\bar s)=\psi_L(L-\bar s)>0$ and, by \eqref{eq:unif_minus},
	$P>0$ and $Q>0$ on $(-\infty,X_*]$, the strong maximum principle for
	cooperative systems yields
	\[
	P>0 \text{ on }(-\infty,L-\bar s],\qquad Q>0 \text{ on } \mathbb{R} .
	\]
	Choose $Z_*>L$ with $\varphi_L>\Phi_{c_0}(L)$ on $[Z_*,+\infty)$. On the
	compact sets $[X_*,L-\bar s]$ and $[X_*,Z_*]$ the continuous functions $P$
	and $Q$ are bounded below by a positive constant; since moreover
	$\psi_L>0$ on $(-\infty,L)$ and $\psi^\sigma(L-\sigma)=0$, a standard
	compactness argument provides $\delta\in(0,\bar s/2)$ such that, for every
	$\sigma\in[\bar s-\delta,\bar s]$,
	\[
	\psi^{\sigma}<\psi_L \ \text{ on }[X_*,L-\sigma]
	\qquad\text{and}\qquad
	\varphi^{\sigma}>\varphi_L \ \text{ on }[X_*,Z_*].
	\]
	Outside these sets the required inequalities hold by \eqref{eq:unif_minus}
	(for $x\le X_*$) and, for $x\ge Z_*$, because
	$\varphi^\sigma(x)=\varphi_L(x+\sigma)>\varphi_L(x)$ by the monotonicity of
	$\varphi_L$ on $(L,+\infty)$. Hence $\bar s-\delta\in S$, contradicting the
	definition of $\bar s$.

	Therefore $\bar s=0$, i.e. $\psi_L(x+s)\le\psi_L(x)$ and
	$\varphi_L(x+s)\ge\varphi_L(x)$ for all $s>0$, so $\psi_L$ is nonincreasing
	and $\varphi_L$ is nondecreasing. Applying the strong maximum principle to
	$\psi_L'$ and $\varphi_L'$, which satisfy linear equations obtained by
	differentiating \eqref{eqlm1} (recall $A$ is Lipschitz and nonincreasing, so
	$A'\le0$ a.e.), we conclude that $\psi_L'<0$ on $(-\infty,L]$ and
	$\varphi_L'>0$ on $\mathbb{R}$.
\end{proof}

\begin{lemma}\label{lm:uniq}
	The solution of \eqref{eqlm1} is unique.
\end{lemma}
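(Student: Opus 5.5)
We prove uniqueness by a sliding/scaling-free comparison argument modelled on the proof of \cref{lm:mono}, now comparing two solutions instead of a solution with its translate. Let $(\psi,\varphi)$ and $(\tilde\psi,\tilde\varphi)$ be two solutions of \eqref{eqlm1} with the same $L$, and pass to cooperative variables $\Theta,\tilde\Theta$ via \eqref{cooper}. Both pairs take values in $\mathcal R$: monotonicity together with the limits at $\pm\infty$ gives $0\le\psi\le u^*$ and $v^*\le\varphi\le a_2/c_2$. By \cref{lm:asym} both have expansions \eqref{asym} with the same exponent $\lambda_1$ but possibly different constants $(k_1,k_2)$ and $(\tilde k_1,\tilde k_2)$. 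Since the equations are autonomous only on $(-\infty,0]$ and the free end $L$ is fixed, a translation is not available here. Instead we use the one-parameter family
\[
\Theta^\rho:=\big(\rho\,\tilde\Theta_1,\ \tfrac{a_2}{c_2}-(\tfrac{a_2}{c_2}-\tilde\Theta_2)\cdots\big)
\]
or, more cleanly, we work directly with the deviations $\hat\psi=u^*-\psi$ and $\hat\varphi=\varphi-v^*$. We compare $(\hat\psi,\hat\varphi)$ with $\kappa(\hat{\tilde\psi},\hat{\tilde\varphi})$ for $\kappa\ge1$, after first checking that this multiplied family is a super/sub-solution pair of the deviation system \eqref{lin}.

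Concretely, the plan runs as follows. First, we show that $\Theta\ge\tilde\Theta$, i.e. $\psi\ge\tilde\psi$ and $\varphi\le\tilde\varphi$; by symmetry this yields equality. To do this, set
\[
\kappa^*:=\inf\{\kappa\ge1:\ u^*-\psi\le\kappa(u^*-\tilde\psi)\text{ on }(-\infty,L],\ \varphi-v^*\le\kappa(\tilde\varphi-v^*)\text{ on }\mathbb R\}.
\]
This set is nonempty. Near $-\infty$ the ratios converge to $k_1/\tilde k_1$ and $k_2/\tilde k_2$ by \cref{lm:asym}. On compact sets, the denominators $u^*-\tilde\psi$ and $\tilde\varphi-v^*$ are positive, since $\tilde\psi<u^*$ and $\tilde\varphi>v^*$ by strict monotonicity. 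Near $+\infty$ we have $\varphi\to a_2/c_2=\tilde\varphi(+\infty)$, and we need $\varphi-v^*\le\kappa(\tilde\varphi-v^*)$ there. This holds for $\kappa$ large because $\tilde\varphi-v^*\ge\tilde\varphi(0)-v^*>0$ on $[0,\infty)$ while $\varphi-v^*\le a_2/c_2-v^*$.

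Second, we argue that $\kappa^*=1$. Suppose $\kappa^*>1$. The key structural fact is a sublinearity: for $\kappa>1$, the function $\kappa(\hat{\tilde\psi},\hat{\tilde\varphi})$ is a strict supersolution of the deviation system. The deviation system reads $d_1\hat\psi''+c\hat\psi'=-(A-b_1\psi-c_1\varphi)(u^*-\hat\psi)$ with $\psi=u^*-\hat\psi$, and similarly for $\hat\varphi$. The reaction terms have the Lotka--Volterra form $(\text{linear in deviations})\times(\text{state})$, so scaling by $\kappa$ produces quadratic terms of the correct sign. We intend to verify this directly, and we will need to treat the factor $A(x)$ on $(0,L)$ and the region where $\kappa\hat{\tilde\psi}>u^*$. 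In that region the comparison for $\psi$ is automatic, since $\hat\psi\le u^*$.

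Third, at $\kappa^*$ we obtain touching. There are three possibilities: a touching point at finite $x$, touching at $x=L$ (where $\hat\psi=\hat{\tilde\psi}=u^*$, excluded for $\kappa^*>1$), or touching only asymptotically at $\pm\infty$. Touching at finite $x$ contradicts the strong maximum principle for the cooperative linear system, exactly as in \cref{lm:mono}. Asymptotic touching at $+\infty$ cannot occur, because there the ratio $(\varphi-v^*)/(\tilde\varphi-v^*)\to1<\kappa^*$. Asymptotic touching at $-\infty$ is the main obstacle. There the ratio tends to $k_j/\tilde k_j$, so we must exclude $\kappa^*=\max(k_1/\tilde k_1,k_2/\tilde k_2)$ with no interior contact. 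We plan to handle this by observing that, because the eigenvector $\xi^{(1)}$ in \cref{lm:asym} is determined up to scaling, $(k_1,k_2)$ is proportional to $(\tilde k_1,\tilde k_2)$, so both ratios coincide. We would then apply a refined expansion, second-order or via the unstable manifold, comparing $\kappa^*\hat{\tilde\Theta}$ and $\hat\Theta$ on the two-dimensional unstable manifold. If this proves difficult, we will fall back on a different parametrisation that mixes translation in the autonomous region $(-\infty,0]$ with the scaling. Once $\kappa^*=1$ is established, $\psi\ge\tilde\psi$ and $\varphi\le\tilde\varphi$; exchanging the roles of the two solutions gives equality.
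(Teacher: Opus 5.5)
\emph{Your proposal has a genuine gap:} the comparison family you choose has the wrong sign, so the second and third steps cannot work.

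\textbf{The scaled deviation is not a supersolution.} For $\kappa>1$, $\kappa(\hat{\tilde\psi},\hat{\tilde\varphi})$ is \emph{not} a supersolution of the deviation system. Using $b_1u^*+c_1v^*=a_1$, the first deviation equation is $d_1\hat\psi''+c\hat\psi'=\big[(A(x)-a_1)+b_1\hat\psi-c_1\hat\varphi\big](u^*-\hat\psi)$. A direct computation gives
\[
d_1(\kappa\hat{\tilde\psi})''+c(\kappa\hat{\tilde\psi})'-\big[(A-a_1)+\kappa(b_1\hat{\tilde\psi}-c_1\hat{\tilde\varphi})\big](u^*-\kappa\hat{\tilde\psi})
=(\kappa-1)\Big[(A(x)-a_1)u^*+\kappa\,\hat{\tilde\psi}\,\big(a_1-b_1\tilde\psi-c_1\tilde\varphi\big)\Big].
\]
On $(-\infty,0]$ the first term vanishes. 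The per-capita growth rate $a_1-b_1\tilde\psi-c_1\tilde\varphi=b_1\hat{\tilde\psi}-c_1\hat{\tilde\varphi}$ is positive near $-\infty$: by \cref{lm:asym} and the eigenvector relation $p(\lambda_1)\xi_1+c_1u^*\xi_2=0$, it equals $\big(\tilde k_1(d_1\lambda_1^2+c\lambda_1)/u^*+o(1)\big)e^{\lambda_1x}$. So $\kappa\hat{\tilde\psi}$ is a strict \emph{sub}solution there.

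The $\varphi$-component fails in the same way. On $(L,\infty)$, where $\tilde\psi=0$, the corresponding defect is $\kappa(\kappa-1)\hat{\tilde\varphi}\,(a_2-c_2\tilde\varphi)>0$, again the wrong sign.

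The scalar logistic equation already shows this. The deviation $w=a_1/b_1-\psi$ solves $d_1w''+cw'=a_1w-b_1w^2$, and $\kappa w$ is a strict subsolution for every $\kappa>1$. Deviations from a stable state carry quadratic terms of exactly the opposite sign to what your argument needs.

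Hence, at a finite contact point at level $\kappa^*>1$, the difference $\kappa^*\hat{\tilde\Theta}-\hat\Theta$ satisfies the linearised cooperative system with a forcing of the wrong sign. The strong maximum principle then gives no contradiction. Your third step fails for this reason alone, independently of the asymptotic issue at $-\infty$, and no local fix repairs a scaling argument of this kind.

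\textbf{What works instead.} The idea you dismissed is the right one: translation is not a symmetry, but it is a one-sided comparison. Because $A$ is nonincreasing, $(\tilde\psi(x+\xi),\tilde\varphi(x+\xi))$ with $\xi\ge0$ is a lower solution of \eqref{eqlm1} in the cooperative order on $(-\infty,L-\xi]$. Its $\psi$-component vanishes at $L-\xi\le L$, exactly as in \cref{lm:mono}.

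The paper slides this family down to $\xi=0$. The only new point compared with \cref{lm:mono} is the tail at $+\infty$. There $W=\tilde\varphi(\cdot+\xi)-\varphi$ solves $d_2W''+cW'+qW=0$ on $[R,\infty)$ with $q=a_2-c_2\big(\tilde\varphi(\cdot+\xi)+\varphi\big)<0$. So $W$ has no negative interior minimum there, and it suffices to control $(-\infty,R]$.

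Your observation that $(k_1,k_2)$ and $(\tilde k_1,\tilde k_2)$ are parallel is correct and useful in this framework. Consider a critical shift $\bar\xi$ with contact only asymptotically at $-\infty$, i.e.\ $\tilde k_je^{\lambda_1\bar\xi}=k_j$. Then both pairs solve the same autonomous system on a left half-line, and their positive difference is $o(e^{\lambda_1x})$. It must therefore decay along $\xi^{(2)}$, whose components have opposite signs, which is impossible.
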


\begin{proof}
	Let $(\psi_1,\varphi_1)$ and $(\psi_2,\varphi_2)$ be two solutions of
	\eqref{eqlm1}. By \cref{lm:asym}, there are $\lambda_1>0$ and positive
	constants $k_1^{(i)},k_2^{(i)}$ with
	\[
	u^*-\psi_i(x)=\big(k_1^{(i)}+o(1)\big)e^{\lambda_1x},
	\qquad
	\varphi_i(x)-v^*=\big(k_2^{(i)}+o(1)\big)e^{\lambda_1x}\qquad(x\to-\infty),\ i=1,2 .
	\]
	For $\xi\ge0$ set $\psi_2^\xi(x):=\psi_2(x+\xi)$ and
	$\varphi_2^\xi(x):=\varphi_2(x+\xi)$. Exactly as in the proof of
	\cref{lm:mono}, $(\psi_2^\xi,\varphi_2^\xi)$ is a lower solution of
	\eqref{eqlm1} on $(-\infty,L-\xi]$, and the two-sided bounds
	\eqref{eq:two_sided}, now written with the constants $k_j^{(i)}$, give: if
	$\varepsilon\in(0,1)$ and $\xi>0$ satisfy
	$(1-\varepsilon)k_j^{(2)}e^{\lambda_1\xi}>(1+\varepsilon)k_j^{(1)}$ for
	$j=1,2$, then $\psi_2^\xi<\psi_1$ and $\varphi_2^\xi>\varphi_1$ on a half-line
	$(-\infty,X]$; and for any $\xi_*>0$ this holds for all $\xi\ge\xi_*$ provided
	$\varepsilon$ is small enough.

	The only point that requires an argument different from \cref{lm:mono} is the
	behaviour at $+\infty$, where $\varphi_1$ and $\varphi_2^\xi$ are two
	\emph{different} functions with the same limit $\frac{a_2}{c_2}$. For
	$x\ge L$ we have $\psi_1(x)=\psi_2^\xi(x)=0$, so both $\varphi_1$ and
	$\varphi_2^\xi$ solve the scalar equation
	$-c\theta'-d_2\theta''=(a_2-c_2\theta)\theta$ there, and
	$W:=\varphi_2^\xi-\varphi_1$ satisfies
	\[
	d_2W''+cW'+qW=0 \quad\text{ on }(L,+\infty),
	\qquad q:=a_2-c_2\big(\varphi_2^\xi+\varphi_1\big).
	\]
	Fix $\epsilon\in\big(0,\frac{a_2}{2c_2}\big)$ and choose $R>L$ so large that
	$\varphi_1>\frac{a_2}{c_2}-\epsilon$ and $\varphi_2>\frac{a_2}{c_2}-\epsilon$
	on $[R,+\infty)$; since $\xi\ge0$ and $\varphi_2$ is increasing on
	$(L,+\infty)$, the same bound holds for $\varphi_2^\xi$ on $[R,+\infty)$,
	whence $q<-a_2+2c_2\epsilon<0$ there, uniformly in $\xi\ge0$. As
	$W(+\infty)=0$, a negative interior minimum of $W$ on $[R,+\infty)$ is
	impossible (at such a point $W''\ge0$, $W'=0$ and $qW>0$), so
	\begin{equation}\label{eq:tail}
		\inf_{[R,+\infty)}\big(\varphi_2^\xi-\varphi_1\big)\ \ge\
		\min\big\{0,\ \varphi_2^\xi(R)-\varphi_1(R)\big\} .
	\end{equation}
	Consequently it suffices to control $\varphi_2^\xi-\varphi_1$ on
	$(-\infty,R]$, and the sliding argument of \cref{lm:mono} applies verbatim
	with $Z$ (resp. $Z_*$) replaced by $R$: the set
	\[
	\mathcal S=\big\{\xi\ge0:\ \psi_2^\xi\le\psi_1 \text{ on }(-\infty,L-\xi],\
	\varphi_2^\xi\ge\varphi_1 \text{ on }\mathbb{R}\big\}
	\]
	is nonempty and $\bar\xi:=\inf\mathcal S=0$. Hence $\psi_2\le\psi_1$ and
	$\varphi_2\ge\varphi_1$. Exchanging the roles of the two solutions gives the
	reverse inequalities, so $(\psi_1,\varphi_1)=(\psi_2,\varphi_2)$.
\end{proof}

\subsection{Proof of \texorpdfstring{\cref{th:semi-wave}}{the semi-wave theorem}(ii) and (iii)}

\begin{proof}[Proof of \cref{th:semi-wave}(ii)]
	Let $0\le L_1<L_2$ and set $\psi_2(x):=\psi_{L_2}(x+L_2-L_1)$,
	$\varphi_2(x):=\varphi_{L_2}(x+L_2-L_1)$. Then $(\psi_2,\varphi_2)$ satisfies
	\[
	\begin{cases}
		-c \psi_2' - d_1 \psi_2'' = \big(A(x + L_2 - L_1) - b_1\psi_2 - c_1\varphi_2\big)\psi_2, & -\infty < x < L_1,\\
		-c \varphi_2' - d_2 \varphi_2'' = (a_2 - b_2\psi_2 - c_2\varphi_2)\varphi_2, & -\infty < x < +\infty,\\
		\psi_2(-\infty) = u^*, ~ \psi_2(x)=0 \text{ for } x\ge L_1,\\
		\varphi_2(-\infty) = v^*, ~ \varphi_2(+\infty)=\frac{a_2}{c_2} .
	\end{cases}
	\]
	Since $A(x+L_2-L_1)\le A(x)$, the pair $(\psi_2,\varphi_2)$ is a lower
	solution of \eqref{eqlm1} with $L=L_1$. Both pairs have the same limits at
	$-\infty$, both $\psi$-components vanish at $L_1$, and by \cref{lm:asym}
	(applied to $\psi_{L_2},\varphi_{L_2}$ and then translated) the pair
	$(\psi_2,\varphi_2)$ obeys the asymptotics \eqref{asym} with the same exponent
	$\lambda_1$. The sliding argument of \cref{lm:uniq} therefore applies verbatim
	and gives
	\[
	\psi_{L_1}\ge\psi_2 \text{ on }(-\infty,L_1],
	\qquad \varphi_{L_1}\le\varphi_2 \text{ on }\mathbb{R} .
	\]
	Both $\psi$-components vanish at $x=L_1$; moreover
	$\psi_{L_1}\not\equiv\psi_2$, since $A$ is strictly decreasing on $[0,l_0]$
	and $L_2>L_1$, so that $A(x+L_2-L_1)<A(x)$ on a nonempty open subset of
	$(-\infty,L_1)$. The strong maximum principle for cooperative systems then
	yields $\psi_{L_1}>\psi_2$ on $(-\infty,L_1)$, and the Hopf boundary lemma
	gives
	\[
	\psi_{L_1}'(L_1) < \psi_2'(L_1) = \psi_{L_2}'(L_2),
	\]
	that is, $L\mapsto\psi_L'(L)$ is strictly increasing.

	We next show $\psi_L'(L)\to0$ as $L\to+\infty$. Put $w_L(x):=\psi_L(x+L)$ for
	$x\le0$, so that $w_L(0)=0$, $0<w_L\le u^*$ and, by \eqref{s0},
	\[
	-cw_L'-d_1w_L''=\big(A(x+L)-b_1w_L-c_1\varphi_L(x+L)\big)w_L
	\le\big(A(x+L)-c_1v^*\big)w_L .
	\]
	For $x\in I_L:=(l_0-L,0)$ we have $x+L>l_0$, hence $A(x+L)=a_0$ and,
	with $\kappa:=c_1v^*-a_0>0$,
	\[
	d_1w_L''+cw_L'-\kappa w_L\ \ge\ 0 \qquad\text{ in } I_L .
	\]
	Let $\beta:=\frac{-c-\sqrt{c^2+4d_1\kappa}}{2d_1}<0$, so that
	$d_1\beta^2+c\beta-\kappa=0$, and set
	$W(x):=u^* e^{\beta(x-l_0+L)}$. Then $d_1W''+cW'-\kappa W=0$,
	$W(l_0-L)=u^*\ge w_L(l_0-L)$ and $W(0)>0=w_L(0)$. The maximum principle
	applied to $w_L-W$ on $I_L$ gives $w_L\le W$ on $I_L$, hence
	\[
	\|w_L\|_{L^\infty([-2,0])}\ \le\ u^* e^{\beta(L-l_0-2)}\ \longrightarrow\ 0
	\qquad (L\to+\infty).
	\]
	Since the coefficients of the equation for $w_L$ are bounded uniformly in
	$L$, elliptic estimates on $[-2,0]$ give
	$\|w_L\|_{C^1([-1,0])}\to0$, and in particular
	$\psi_L'(L)=w_L'(0)\to0$.
\end{proof}

\begin{proof}[Proof of \cref{th:semi-wave}(iii)]
	Define $f(L):=-\mu(A(L))\psi_L'(L)$ for $L\ge0$. By part (ii), $L\mapsto
	-\psi_L'(L)$ is positive, strictly decreasing and tends to $0$; since $A$ is
	nonincreasing and $\mu$ is nondecreasing with $\mu(a_0)>0$, the function
	$L\mapsto\mu(A(L))$ is positive and nonincreasing. Hence $f$ is positive,
	strictly decreasing and $f(+\infty)=0$. Continuity of $f$ follows from the
	uniqueness in part (i) together with elliptic estimates and a compactness
	argument.

	For $L=0$ we have $A\equiv a_1$ on $(-\infty,0]$, so \eqref{eqlm1} coincides
	with \eqref{semiwave0}; by the uniqueness in \cref{th:semi-wave0},
	$(\psi_0,\varphi_0)\equiv(\Psi_c,\Phi_c)$ and therefore
	\[
	f(0)=-\mu(a_1)\Psi_c'(0).
	\]
	Consider $G(c):=-\mu(a_1)\Psi_c'(0)-c$ on $[0,c_0]$. It is continuous by
	\cref{th:semi-wave0}, $G(0)=-\mu(a_1)\Psi_0'(0)>0$, $G(c_0)=0$ by \eqref{c0},
	and $c_0$ is the \emph{only} zero of $G$ in $[0,c^*)$. Hence $G>0$ on
	$[0,c_0)$, i.e.
	\[
	f(0)\ \ge\ c \quad\text{ for } 0<c\le c_0,\qquad\text{ with equality iff } c=c_0 .
	\]
	Since $f$ is continuous, strictly decreasing, $f(0)\ge c$ and $f(+\infty)=0$,
	the intermediate value theorem provides a unique $L_0\ge0$ with $f(L_0)=c$,
	and $L_0=0$ if and only if $f(0)=c$, i.e. if and only if $c=c_0$. In that case
	$(\psi_0,\varphi_0)\equiv(\Psi_{c_0},\Phi_{c_0})$.
\end{proof}

\begin{remark}\label{rm:perturbed}
	In \cref{sec_6} we shall apply \cref{th:semi-wave} to perturbed problems in
	which $(a_1,a_2)$ is replaced by $(a_1\pm2\delta,a_2\mp\delta)$ and $A$ by
	$A\pm2\delta$. For $\delta>0$ small all the structural assumptions are
	preserved: \eqref{weak_competition} remains valid by continuity, the
	perturbed $A$ still satisfies \eqref{A} with $a_0\pm2\delta<0<a_1\pm2\delta$,
	and $\mu$ is defined on the perturbed range thanks to the extension of $\mu$
	introduced after \eqref{mu}. Denoting by $c_{0,\delta}$ the corresponding
	critical speed, the continuous dependence of $c_0$ on the parameters
	(\cref{th:semi-wave0}) gives $c_{0,\delta}\to c_0$ as $\delta\to0$.
\end{remark}

\section{The spreading speed: proof of \texorpdfstring{\cref{th:spreading_speed}}{the spreading speed theorem}}\label{sec_6}

Throughout this section we assume \eqref{weak_competition}, \eqref{initial} and
that spreading occurs, i.e. $h_\infty=+\infty$; by \cref{th1:lm2},
\begin{equation}\label{eq:spread_conv}
	\lim_{t\to+\infty}\big(u(t,x),v(t,x)\big)=(u^*,v^*)
	\qquad\text{ locally uniformly in }x\in[0,\infty).
\end{equation}
We distinguish the three cases $c<c_0$, $c=c_0$ and $c>c_0$. Several comparison
arguments below start at a positive time $T$; this is legitimate because
\cref{th7} holds with $0$ replaced by any initial time $T\ge0$, the data
$(u_0,v_0,h_0)$ being replaced by $(u(T,\cdot),v(T,\cdot),h(T))$. We always keep
the original time variable, so that the environment felt by the comparison
functions is exactly $A(\cdot-ct)$ and no relabelling of $A$ is needed.

\subsection{The case \texorpdfstring{$c<c_0$}{c<c0}}

\begin{lemma}\label{lem:limsup}
	Suppose $c < c_0$. Then $\displaystyle\limsup_{t \to +\infty} \frac{h(t)}{t} \le c$.
\end{lemma}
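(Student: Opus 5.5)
The plan is to construct an upper solution of \eqref{main} in the sense of \cref{th7}(a) that moves at exactly speed $c$. It will be built from the shifted semi-wave of \cref{th:semi-wave} for a slightly perturbed problem, as announced in \cref{rm:perturbed}.

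\textbf{Setting up the perturbed semi-wave.} Fix a small $\delta>0$ and replace $(a_1,a_2,A)$ by $(a_1+2\delta,a_2-\delta,A+2\delta)$. The perturbed coexistence state $(u^*_\delta,v^*_\delta)$ then satisfies $u^*_\delta>u^*$ and $v^*_\delta<v^*$, by the explicit formulas \eqref{coexist}. By \cref{rm:perturbed}, $c_{0,\delta}\to c_0$, so $c<c_{0,\delta}$ for $\delta$ small. \cref{th:semi-wave}(iii) then gives $L_\delta>0$ and a pair $(\psi_\delta,\varphi_\delta):=(\psi_{L_\delta},\varphi_{L_\delta})$ with
\[
-\mu\big(A(L_\delta)+2\delta\big)\psi_\delta'(L_\delta)=c .
\]
For a shift $X>0$ to be chosen, I set
\[
\overline h(t):=c t+L_\delta+X,\qquad \overline u(t,x):=\psi_\delta(x-ct-X),\qquad \underline v(t,x):=\varphi_\delta(x-ct-X).
\]

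\textbf{Checking the differential inequalities.} In the moving variable $\xi=x-ct-X$, the function $\overline u$ feels the coefficient $A(\xi)+2\delta$. Since $A$ is nonincreasing and $X>0$, we have
\[
A(\xi)+2\delta\ \ge\ A(\xi+X)=A(x-ct),
\]
so the $u$-inequality of \cref{th7}(a) holds. The $v$-inequality holds with room to spare, because $a_2-\delta<a_2$. For the free boundary, $\overline h'(t)=c$, while
\[
\mu\big(A(\overline h-ct)\big)=\mu\big(A(L_\delta+X)\big)\le\mu\big(A(L_\delta)+2\delta\big)
\]
by the monotonicity of $A$ and $\mu$. Together with $\psi_\delta'(L_\delta)<0$, this gives $\overline h'\ge-\mu(A(\overline h-ct))\overline u_x(t,\overline h)$. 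At $x=0$ the first alternative of \cref{th7}(a) holds, since $\psi_\delta'\le0$ and $\varphi_\delta'>0$.

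\textbf{Ordering at the starting time (main obstacle).} The comparison will be started at a large time $T$. For $\overline u$ this is easy. By \cref{lem:vlow}, $u(T,\cdot)\le u^*+\varepsilon$ for $T$ large, with $u^*+\varepsilon<u^*_\delta$. Taking $X$ so large that $\psi_\delta(\xi)\ge u^*+\varepsilon$ for $\xi\le h(T)-cT-X$ yields $\overline u(T,\cdot)\ge u(T,\cdot)$ and $\overline h(T)>h(T)$.

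The real difficulty is $\underline v(T,\cdot)\le v(T,\cdot)$ on all of $[0,\infty)$. This is delicate because $\varphi_\delta$ rises to $(a_2-\delta)/c_2$ ahead of the front, whereas a priori we only know $v\ge v^*-\varepsilon$ from \cref{lem:vlow}. I would handle it in two parts.

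\emph{Far region.} Since $h(t)\le h_0+M_3t$ by \cref{th6}, the function $v$ satisfies $v_t=d_2v_{xx}+(a_2-c_2v)v$ for $x>h_0+M_3t$. Comparing with a solution on a moving half-line, started from $\sigma_0^{\max}$ in \eqref{v0inf}, should give $v(T,x)\ge (a_2-\delta)/c_2$ for $x\ge KT$, for some constant $K$. This is where \eqref{v0inf} is used.

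\emph{Near region.} On $[0,KT]$ I choose $X\ge KT+|\!\inf\!|$ so that the argument of $\varphi_\delta$ is so negative that $\varphi_\delta\le v^*_\delta+\varepsilon'\le v^*-\varepsilon\le v(T,\cdot)$. This uses $v^*_\delta<v^*$ and $\varphi_\delta(-\infty)=v^*_\delta$.

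\textbf{Conclusion.} With the initial ordering in place, \cref{th7}(a), started at time $T$, gives $h(t)\le ct+L_\delta+X$ for all $t\ge T$. Dividing by $t$ and letting $t\to\infty$ yields $\limsup_{t\to\infty}h(t)/t\le c$.
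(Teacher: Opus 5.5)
Your proposal is correct and follows the paper's proof essentially step by step. It uses the same perturbation $(a_1+2\delta,a_2-\delta,A+2\delta)$, the same shifted semi-wave $(\psi_\delta,\varphi_\delta)(x-ct-X)$ as upper solution with the first alternative at $x=0$, and the same near/far splitting to order $\underline v(T,\cdot)\le v(T,\cdot)$ (near region via \cref{lem:vlow} and $\varphi_\delta(-\infty)=v^*_\delta<v^*$, far region via \eqref{v0inf}). The only difference is in the far-region step: the paper avoids a moving half-line and just compares $v$ on the fixed half-line $x>h(T)$ over $[0,T]$ (valid since $h$ is increasing) with the Dirichlet logistic problem started from $\min\{\sigma_0^{\max},a_2/c_2\}$. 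That gives your bound directly, without needing a steady state of the drifted equation.
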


\begin{proof}
	For $\delta>0$ set
	\[
	u^*_\delta:=\frac{(a_1+2\delta)c_2-(a_2-\delta)c_1}{b_1c_2-b_2c_1},
	\qquad
	v^*_\delta:=\frac{b_1(a_2-\delta)-(a_1+2\delta)b_2}{b_1c_2-b_2c_1},
	\]
	which are the coexistence states of the system obtained from \eqref{main} by
	replacing $A$ with $A+2\delta$ and $a_2$ with $a_2-\delta$. Clearly
	$\delta\mapsto u^*_\delta$ is increasing and $\delta\mapsto v^*_\delta$ is
	decreasing, so that
	\begin{equation}\label{eq:order_delta}
		u^* < u^*_{\delta/2} < u^*_\delta,
		\qquad
		v^* > v^*_{\delta/2} > v^*_\delta .
	\end{equation}
	By \cref{rm:perturbed}, for $\delta>0$ small all the structural assumptions
	are preserved and the corresponding critical speed $c_{0,\delta}$ satisfies
	$c_{0,\delta}\to c_0$ as $\delta\to0$. Since $c<c_0$, we may and do fix
	$\delta>0$ so small that $c<c_{0,\delta}$. By \cref{th:semi-wave} applied to
	the perturbed problem, there exist $L_\delta\ge0$ and a unique solution
	$(\psi_\delta,\varphi_\delta)$ of
	\begin{equation}\label{aux}
		\begin{cases}
			-c \psi' - d_1 \psi'' = (A(x) +2 \delta - b_1\psi - c_1\varphi)\psi, & -\infty < x < L_\delta,\\
			-c \varphi' - d_2 \varphi'' = (a_2 - \delta - b_2\psi - c_2\varphi)\varphi, & -\infty < x < +\infty,\\
			\psi(-\infty) = u^*_\delta, ~ \psi(x)=0 \text{ for } x\ge L_\delta, ~ \psi'(x)<0 \text{ for } x\le L_\delta,\\
			\varphi(-\infty) = v^*_\delta, ~ \varphi(+\infty)=\frac{a_2 - \delta}{c_2}, ~ \varphi'(x)>0 \text{ for } x\in \mathbb{R},
		\end{cases}
	\end{equation}
	such that
	\begin{equation}\label{semi-wave-c}
		c = -\mu\big(A(L_\delta) +2 \delta\big) \psi'_{\delta}(L_\delta).
	\end{equation}
	Note that $0<\psi_\delta<u^*_\delta$ on $(-\infty,L_\delta)$ and
	$v^*_\delta<\varphi_\delta<\frac{a_2-\delta}{c_2}$ on $\mathbb{R}$.

	\textit{Step 1.} There exists $T_1>0$ such that
	\begin{equation}\label{eq:step1}
		u(t,x) \le u^*_{\delta/2},\qquad v(t,x) \ge v^*_{\delta/2}
		\qquad (x\ge0,\ t\ge T_1).
	\end{equation}
	Indeed, this is \eqref{eq:uv_global} of \cref{lem:vlow} combined with
	\eqref{eq:order_delta}.

	\textit{Step 2.} For every $T_0>0$ there exist $T>T_0$ and $S_0>0$ such that
	\begin{equation}\label{eq:step2}
		v(T, x) \ge \frac{a_2-\delta}{c_2} \qquad\text{ for all } x\ge S_0 .
	\end{equation}
	To see this, recall $\sigma_0^{\max}=\inf_{x\ge0}v_0(x)>0$ from \eqref{v0inf}
	and consider
	\begin{equation}\label{aux1}
		\begin{cases}
			w_t - d_2w_{xx} = (a_2 - c_2w)w, & t > 0, ~ x > 0,\\
			w(t, 0) = 0, & t > 0,\\
			w(0,x) = \min\{\sigma_0^{\max},a_2/c_2\}, & x > 0.
		\end{cases}
	\end{equation}
	It is classical that $w(t,\cdot)\to w_*$ locally uniformly on $[0,\infty)$,
	where $w_*$ is the unique positive solution of $-d_2w_*''=(a_2-c_2w_*)w_*$ on
	$(0,\infty)$ with $w_*(0)=0$, and that $w_*'>0$, $w_*(+\infty)=a_2/c_2$.
	Choose $S_1>0$ and $T>T_0$ with
	$w(t,S_1)\ge w_*(S_1)-\frac{\delta}{2c_2}\ge\frac{a_2-\delta}{c_2}$ for
	$t\ge T$. Applying the maximum principle to the equation satisfied by $w_x$
	gives $w_x\ge0$, hence
	\begin{equation}\label{eq:w_lower}
		w(t,x)\ge\frac{a_2-\delta}{c_2}\qquad (t\ge T,\ x\ge S_1).
	\end{equation}
	Set $S_2:=h(T)$ and $\tilde w(t,x):=w(t,x-S_2)$. Since $h(t)\le S_2$ for
	$t\in[0,T]$, the function $v$ satisfies
	$v_t-d_2v_{xx}=(a_2-c_2v)v$ for $0<t\le T$, $x>S_2$, together with
	$\tilde w(t,S_2)=0<v(t,S_2)$ and
	$\tilde w(0,x)\le\sigma_0^{\max}\le v(0,x)$ for $x>S_2$; the comparison
	principle yields $v(t,x)\ge w(t,x-S_2)$ for $0<t\le T$ and $x>S_2$. With
	$S_0:=S_1+S_2$ and \eqref{eq:w_lower} we obtain \eqref{eq:step2}.

	\textit{Step 3: construction of an upper solution.} Apply Step 2 with
	$T_0:=T_1$ to obtain $T>T_1$ and $S_0>0$ as in \eqref{eq:step2}, and set
	$S:=S_0+h(T)$. Since $\psi_\delta(-\infty)=u^*_\delta>u^*_{\delta/2}$ and
	$\varphi_\delta(-\infty)=v^*_\delta<v^*_{\delta/2}$, we may choose $R>0$ so
	large that
	\begin{equation}\label{eq:choiceR}
		cT+L_\delta+R>h(T),
		\qquad
		\psi_{\delta}(x -cT-R) > u^*_{\delta/2}
		\quad\text{and}\quad
		\varphi_{\delta}(x -cT- R) < v^*_{\delta/2}
		\qquad\text{ for } x\in[0,S].
	\end{equation}
	Define, for $t\ge T$,
	\[
	\overline{h}(t) := c t + L_\delta + R,\qquad
	\overline{u}(t,x) := \psi_{\delta}(x-ct -R),\qquad
	\underline{v}(t,x) := \varphi_{\delta}(x-ct-R).
	\]
	We claim that $(\overline u,\underline v,\overline h)$ is an upper solution of
	\eqref{main} on $[T,+\infty)$. Note that we keep the original time variable
	here, so that the environment felt by the comparison functions is exactly
	$A(\cdot-ct)$ and no relabelling of $A$ is needed. Indeed, since $A$ is nonincreasing and
	$R>0$, we have $A(x-ct-R)+2\delta\ge A(x-ct)$, so
	\begin{align*}
		\overline{u}_t - d_1 \overline{u}_{xx}
		&= -c\psi'_\delta-d_1\psi''_\delta
		= \big(A(x - c t - R) + 2\delta - b_1 \overline u - c_1 \underline v\big) \overline u
		\ \ge\ \big(A(x - ct) - b_1 \overline{u} - c_1 \underline{v}\big) \overline{u},\\
		\underline{v}_t - d_2 \underline{v}_{xx}
		&= -c\varphi'_\delta-d_2\varphi''_\delta
		= \big(a_2 - \delta - b_2\overline u - c_2\underline v\big) \underline v
		\ \le\ \big(a_2 - b_2\overline{u} - c_2\underline{v}\big)\underline{v},
	\end{align*}
	and
	\[
	\overline{u}_x(t, 0)=\psi'_\delta(-ct-R) \le 0 \le \varphi'_\delta(-ct-R)=\underline{v}_x(t, 0),
	\qquad
	\overline{u}(t, x) = 0 \ \text{ for } x\ge \overline{h}(t) ,
	\]
	so the first alternative at $x=0$ in \cref{th7}(a) holds. For the free boundary
	condition, note $\overline h(t)-ct-R=L_\delta$, hence
	$\overline u_x(t,\overline h(t))=\psi'_\delta(L_\delta)$, and since
	$A(L_\delta)+2\delta> A(L_\delta)\ge A(L_\delta+R)=A(\overline h(t)-ct)$ and
	$\mu$ is nondecreasing, \eqref{semi-wave-c} gives
	\[
	\overline{h}'(t) = c = -\mu\big(A(L_\delta)+2 \delta\big) \psi_{\delta}'(L_\delta)
	\ \ge\ -\mu\big(A(\overline h(t)-ct)\big) \overline u_x(t,\overline h(t)) .
	\]
	Finally, by \eqref{eq:choiceR}, \eqref{eq:step1} and \eqref{eq:step2},
	\[
	\overline h(T)=cT+L_\delta+R>h(T),\qquad
	\overline{u}(T, x) = \psi_\delta(x-cT-R)>u^*_{\delta/2}\ge u(T,x)\quad (0\le x\le h(T)),
	\]
	\[
	\underline{v}(T, x) = \varphi_\delta(x-cT-R)
	\begin{cases}
		<v^*_{\delta/2}\le v(T,x), & 0\le x\le S,\\[2pt]
		<\frac{a_2-\delta}{c_2}\le v(T,x), & x> S\ (\ge S_0).
	\end{cases}
	\]
	By \cref{th7}(a), $h(t)\le\overline{h}(t)=ct+L_\delta+R$ for all $t\ge T$,
	whence $\limsup_{t\to+\infty}h(t)/t\le c$.
\end{proof}

\begin{lemma}\label{lem:liminf}
	Suppose $c < c_0$. Then $\displaystyle\liminf_{t \to +\infty} \frac{h(t)}{t} \ge c$.
\end{lemma}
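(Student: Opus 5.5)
Fix $\delta>0$ small and build a lower solution from the semi-wave of a perturbed problem, following the proof of \cref{lem:limsup} with the roles reversed. Replace $A$ by $A-2\delta$ and $a_2$ by $a_2+\delta$, and let $u^*_{-\delta}<u^*$, $v^*_{-\delta}>v^*$ be the corresponding coexistence states. By \cref{rm:perturbed} the critical speed $c_{0,-\delta}\to c_0$, so for $\delta$ small we have $c<c_{0,-\delta}$. Then \cref{th:semi-wave} yields $L_{-\delta}\ge0$ and a semi-wave $(\psi_{-\delta},\varphi_{-\delta})$ for these data, with $-\mu(A(L_{-\delta})-2\delta)\psi_{-\delta}'(L_{-\delta})=c$. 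To keep some room I would instead use the speed $c-\epsilon$ in place of $c$ (allowed since $c-\epsilon\le c_0$). This gives a slack: the lower free boundary moves at speed $c-\epsilon$, so $\underline h(t)-ct\to-\infty$ and the front eventually lies deep in the favourable region.

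The candidate lower solution is, for $t\ge T$,
\[
\underline h(t)=(c-\epsilon)(t-T)+L_{-\delta}+X_0,\qquad \underline u(t,x)=\psi_{-\delta}\big(x-(c-\epsilon)(t-T)-X_0\big),\qquad \overline v(t,x)=\varphi_{-\delta}\big(x-(c-\epsilon)(t-T)-X_0\big),
\]
with $X_0$ very negative. The steps are as follows.

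First, the differential inequality for $\underline u$ needs $A(y)-2\delta\le A(x-ct)$ with $y=x-(c-\epsilon)(t-T)-X_0$, on $x<\underline h(t)$. Since $A$ is nonincreasing, it suffices that $x-ct\le y$, i.e. $\epsilon t-\epsilon T+cT+X_0\ge... $; more precisely $y-(x-ct)=\epsilon(t-T)+cT-X_0\ge 0$. This holds automatically if $X_0\le cT$, which we may arrange. The $\overline v$ inequality holds because $a_2+\delta\ge a_2$. For the Stefan condition, $\mu(A(\underline h-ct))\ge\mu(A(L_{-\delta})-2\delta)$ follows from the same ordering $\underline h-ct\le L_{-\delta}$, using that $\mu$ is nondecreasing.

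Second, at $x=0$ use the second alternative of \cref{th7}(b): $\underline u(t,0)\le u(t,0)$ and $\overline v(t,0)\ge v(t,0)$. I would get these from $\underline u\le u^*_{-\delta}$ and $\overline v\ge v^*_{-\delta}$, combined with \eqref{eq:spread_conv}. This is delicate because \eqref{eq:spread_conv} only gives $u\to u^*$ locally, while we need $u(t,0)\ge u^*_{-\delta}$ for all $t\ge T$, which holds for large $T$ since $u^*_{-\delta}<u^*$ (similarly for $v$).

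Third, the initial ordering at $t=T$. Choose $T$ large so that $u(T,\cdot)\ge u^*_{-\delta}+$ something on $[0,\ell]$ and $v(T,\cdot)\le v^*_{-\delta}$ on $[0,\ell]$, with $h(T)>\ell$. Then choose $X_0$ with $L_{-\delta}+X_0\le\ell$, so that $\underline u(T,\cdot)$ is supported in $[0,\ell]$ and bounded by $u^*_{-\delta}\le u(T,\cdot)$.

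The main obstacle is $\overline v(T,x)\ge v(T,x)$ for $x>\ell$, where $v$ may be as large as $M_2$ and $\overline v<(a_2+\delta)/c_2$. Since $\limsup v\le a_2/c_2$ uniformly by \eqref{v1}, for $T$ large we have $v(T,\cdot)\le a_2/c_2+\delta/(2c_2)$ everywhere. Because $\varphi_{-\delta}(+\infty)=(a_2+\delta)/c_2$, shifting $X_0$ sufficiently negative makes $\overline v(T,x)$ exceed this bound on $x\ge\ell'$. On the gap $[\ell,\ell']$ I would enlarge $\ell$ to cover it: choose $\ell'$ first (it depends on $X_0$ only through the profile) and then $T$ so that local convergence holds on $[0,\ell']$. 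This circularity must be handled by fixing the translated profile relative to $X_0$.

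With this in place, \cref{th7}(b) gives $h(t)\ge(c-\epsilon)(t-T)+L_{-\delta}+X_0$, so $\liminf h/t\ge c-\epsilon$. Letting $\epsilon\to0$ gives the claim.
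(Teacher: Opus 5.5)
Your proposal is correct and is essentially the paper's proof. Both use a semi-wave of the problem perturbed to $A-2\delta$ and an enlarged $a_2$, translated along the front and compared via \cref{th7}(b) with the second alternative at $x=0$; the initial ordering of $\overline v$ comes from local convergence to $(u^*,v^*)$ on a bounded interval plus the uniform bound $v<(a_2+\delta)/c_2$ beyond it.

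The $\epsilon$-slack and the extra shift $X_0$ are not needed. The paper uses speed $c$ and $X_0=0$, since the fixed offset $cT_0>0$ already gives $A(x-c(t-T_0))\le A(x-ct)$. It also fixes $R_1$ from the untranslated profile before choosing $T_0$, so no circularity arises. If you keep $X_0$, do not take it so negative that $\underline h(T)=L_{-\delta}+X_0\le0$, because \cref{th7} requires a positive initial front.
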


\begin{proof}
	For $\delta>0$ set
	\[
	u^*_\delta:=\frac{(a_1-2\delta)c_2-(a_2+2\delta)c_1}{b_1c_2-b_2c_1},
	\qquad
	v^*_\delta:=\frac{b_1(a_2+2\delta)-(a_1-2\delta)b_2}{b_1c_2-b_2c_1},
	\]
	so that $u^*_\delta<u^*$ and $v^*_\delta>v^*$. As in \cref{lem:limsup},
	\cref{rm:perturbed} allows us to fix $\delta>0$ so small that
	\eqref{weak_competition} still holds for the perturbed parameters and $c$ is
	below the corresponding critical speed. By \cref{th:semi-wave} there exist
	$L_\delta\ge0$ and a unique solution $(\psi_\delta,\varphi_\delta)$ of
	\begin{equation}\label{aux2}
		\begin{cases}
			-c \psi' - d_1 \psi'' = (A(x) -2\delta - b_1\psi - c_1\varphi)\psi, & -\infty < x < L_\delta,\\
			-c \varphi' - d_2 \varphi'' = (a_2 + 2\delta - b_2\psi - c_2\varphi)\varphi, & -\infty < x < +\infty,\\
			\psi(-\infty) = u^*_\delta, ~ \psi(x)=0 \text{ for } x\ge L_\delta, ~ \psi'(x)<0 \text{ for } x\le L_\delta,\\
			\varphi(-\infty) = v^*_\delta, ~ \varphi(+\infty)=\frac{a_2 + 2\delta}{c_2}, ~ \varphi'(x)>0 \text{ for } x\in \mathbb{R},
		\end{cases}
	\end{equation}
	with
	\begin{equation}\label{eq:sw2}
		c = -\mu\big(A(L_\delta) - 2\delta\big)\psi_{\delta}'(L_\delta).
	\end{equation}
	In particular $0<\psi_\delta<u^*_\delta$ on $(-\infty,L_\delta)$ and
	$v^*_\delta<\varphi_\delta<\frac{a_2+2\delta}{c_2}$ on $\mathbb{R}$.

	Let $V$ be the solution of $V'=(a_2-c_2V)V$, $V(0)=\|v_0\|_\infty$; comparison
	gives $v(t,x)\le V(t)$ for all $t>0$, $x\ge0$, and since
	$V(t)\to a_2/c_2$ there is $T_2>0$ with
	\begin{equation}\label{eq:v}
		v(t,x) < \frac{a_2 + \delta}{c_2} \qquad (t \ge T_2,\ x \ge 0).
	\end{equation}
	Since $\varphi_\delta$ is increasing with
	$\varphi_\delta(+\infty)=\frac{a_2+2\delta}{c_2}$, we may pick
	$R_1\ge L_\delta$ with
	\begin{equation}\label{eq:vp}
		\varphi_{\delta}(x) > \frac{a_2+\delta}{c_2} \quad\text{ for } x \ge R_1 .
	\end{equation}
	By \eqref{eq:spread_conv} and $h_\infty=+\infty$, and since $u^*_\delta<u^*$
	and $v^*_\delta>v^*$, there exists $T_0>T_2$ such that
	\begin{equation}\label{eq:uv}
		h(T_0) > L_\delta,\qquad
		u(t,x) > u^*_\delta,\qquad v(t,x) < v^*_\delta
		\qquad (t \ge T_0,\ x \in [0, R_1]).
	\end{equation}

	Define, for $t\ge T_0$,
	\[
	\underline{h}(t) := c(t-T_0) + L_\delta,\qquad
	\underline{u}(t,x) := \psi_{\delta}\big(x - c(t-T_0)\big),\qquad
	\overline{v}(t,x) := \varphi_{\delta}\big(x - c(t-T_0)\big).
	\]
	We claim that $(\underline u,\overline v,\underline h)$ is a lower solution of
	\eqref{main} with initial time $T_0$. First, since $A$ is nonincreasing and
	$cT_0>0$, we have $A\big(x-c(t-T_0)\big)\le A(x-ct)$, hence for
	$x\in[0,\underline h(t)]$,
	\begin{align*}
		\underline{u}_t - d_1 \underline{u}_{xx}
		&= -c \psi_{\delta}' - d_1 \psi_{\delta}''
		= \big(A(x -c(t-T_0)) -2 \delta - b_1 \underline{u} - c_1 \overline{v}\big) \underline{u}
		\ \le\ \big(A(x - ct) - b_1 \underline{u} - c_1 \overline{v}\big) \underline{u},\\
		\overline{v}_t - d_2 \overline{v}_{xx}
		&= -c\varphi_{\delta}' -d_2 \varphi_{\delta}''
		= \big(a_2 + 2\delta - b_2 \underline u - c_2 \overline v\big) \overline v
		\ \ge\ \big(a_2 - b_2\underline{u} - c_2 \overline{v}\big) \overline{v}.
	\end{align*}
	Next, $\underline u_x(t,\underline h(t))=\psi_\delta'(L_\delta)$ and
	$\underline h(t)-ct=L_\delta-cT_0<L_\delta$, so
	$A(\underline h(t)-ct)\ge A(L_\delta)>A(L_\delta)-2\delta$ and, $\mu$ being
	nondecreasing and $-\psi'_\delta(L_\delta)>0$, \eqref{eq:sw2} gives
	\[
	\underline{h}'(t) = c = -\mu\big(A(L_\delta) - 2 \delta \big)\psi_{\delta}'(L_\delta)
	\ \le\ -\mu\big(A(\underline{h}(t) - ct)\big)\underline{u}_x(t,\underline{h}(t)).
	\]
	At $x=0$ we use the \emph{second} alternative of \cref{th7}(b): by
	\eqref{eq:uv}, for $t\ge T_0$,
	\[
	\underline{u}(t,0)=\psi_\delta\big(-c(t-T_0)\big)<u^*_\delta<u(t,0),
	\qquad
	\overline{v}(t,0)=\varphi_\delta\big(-c(t-T_0)\big)>v^*_\delta>v(t,0).
	\]
	Finally, at $t=T_0$ we have $\underline h(T_0)=L_\delta<h(T_0)$,
	\[
	\underline{u}(T_0,x) = \psi_{\delta}(x) < u^*_\delta < u(T_0,x)
	\qquad (0\le x \le L_\delta),
	\]
	and, using \eqref{eq:uv} for $x\in[0,R_1]$ and \eqref{eq:v}--\eqref{eq:vp} for
	$x>R_1$,
	\[
	\overline{v}(T_0,x) = \varphi_{\delta}(x)
	\begin{cases}
		>v^*_\delta> v(T_0,x), & 0\le x\le R_1,\\[2pt]
		>\frac{a_2+\delta}{c_2}>v(T_0,x), & x> R_1 .
	\end{cases}
	\]
	By \cref{th7}(b), $h(t)\ge\underline h(t)=c(t-T_0)+L_\delta$ for $t\ge T_0$,
	and therefore $\liminf_{t\to+\infty}h(t)/t\ge c$.
\end{proof}

\subsection{The case \texorpdfstring{$c\ge c_0$}{c>=c0}}

\begin{lemma}\label{lm:limsup_c0}
	Suppose $c\ge c_0$. Then $\displaystyle\limsup_{t\to+\infty}\frac{h(t)}{t}\le c_0$.
\end{lemma}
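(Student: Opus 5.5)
We compare \eqref{main} with the homogeneous problem \eqref{main2}; no new semi-wave analysis should be needed. As in Step 0 of \cref{th1:lm2}, we use $A(\cdot)\le a_1$, $\mu(A(\cdot))\le\mu(a_1)$ and $-u_x(t,h(t))>0$. These show that $(u,v,h)$ is a lower solution of \eqref{main2} in the sense of \cref{th7}(b), taken with the first alternative at $x=0$ and valid for \eqref{main2} by \cref{rm:sub_super}. Let $(\overline u,\underline v,\overline h)$ denote the solution of \eqref{main2} with the same data $(u_0,v_0,h_0)$. We then get $h(t)\le\overline h(t)$ for all $t>0$. Since $h_\infty=+\infty$, also $\overline h(t)\to+\infty$, so spreading occurs for \eqref{main2}.

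Next we use the spreading speed result for the homogeneous weak competition system. This is \cite[Theorem 1.5]{MR3986328}, recalled before \cref{th:spreading_speed} as the statement that \eqref{main2} has asymptotic spreading speed $c_0$. It gives $\lim_{t\to\infty}\overline h(t)/t=c_0$. The data satisfy its hypotheses because $(u_0,v_0)$ satisfies \eqref{initial}, which includes $\liminf_{x\to\infty}v_0>0$, the analogue of \cite[(1.4)]{MR3986328}. Hence
\[
\limsup_{t\to\infty}\frac{h(t)}{t}\le\lim_{t\to\infty}\frac{\overline h(t)}{t}=c_0 .
\]
Note that the assumption $c\ge c_0$ is not actually used for this upper bound. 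It matters only for the matching lower bound, where the climate shift could otherwise slow the front.

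The main point to check is that the cited result in \cite{MR3986328} really gives the limit of $\overline h(t)/t$ under exactly our assumptions, including the scaling in \cref{rm:convert}. If one prefers a self-contained argument, the fallback is to repeat the construction of \cref{lem:limsup} with $A$ replaced by the constant $a_1+2\delta$ and $a_2$ by $a_2-\delta$. One then uses the semi-wave $(\Psi_{c_{0,\delta}},\Phi_{c_{0,\delta}})$ of \cref{th:semi-wave0} for the perturbed parameters, shifted to move at speed $c_{0,\delta}$. This gives an upper solution $\overline h(t)=c_{0,\delta}t+R$, $\overline u=\Psi_{c_{0,\delta}}(x-\overline h(t))$, $\underline v=\Phi_{c_{0,\delta}}(x-\overline h(t))$. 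The free boundary condition holds with equality, since $\mu(A(\cdot))\le\mu(a_1+2\delta)$. The initial ordering at a large time $T$ follows from Steps 1--2 of \cref{lem:limsup}, namely \cref{lem:vlow} and the bound $v(T,x)\ge(a_2-\delta)/c_2$ for large $x$. Letting $\delta\to0$ and using $c_{0,\delta}\to c_0$ from \cref{rm:perturbed} gives the claim.
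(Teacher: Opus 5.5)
Your proposal is correct and is essentially the paper's proof: $(u,v,h)$ is a lower solution of \eqref{main2} via \cref{th7}(b). Hence $h\le\overline h$, spreading occurs for \eqref{main2}, and the known spreading speed of the homogeneous weak competition system gives $\overline h(t)/t\to c_0$ (the paper cites \cite[Theorem 1.3]{MR3986328} for this step). As you observe, the hypothesis $c\ge c_0$ is not used, and your fallback construction is unnecessary, though it would also work by mirroring \cref{lem:limsup}.
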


\begin{proof}
	As in Step~0 of the proof of \cref{th1:lm2}, $(u,v,h)$ is a lower solution of
	\eqref{main2}; let $(\overline u,\underline v,\overline h)$ be the solution of
	\eqref{main2} with the same initial data, so that $h\le\overline h$. Since
	$h_\infty=+\infty$ we get $\overline h(t)\to+\infty$, i.e. spreading occurs
	for \eqref{main2}, and \cite[Theorem 1.3]{MR3986328} gives
	$\overline h(t)/t\to c_0$. Hence $\limsup_{t\to+\infty}h(t)/t\le c_0$.
\end{proof}

\begin{lemma}\label{lm:eq}
	Suppose $c = c_0$. Then $\displaystyle\lim_{t\to+\infty} \frac{h(t)}{t} = c_0$.
\end{lemma}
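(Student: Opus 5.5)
The upper bound is already available: since $c=c_0$, \cref{lm:limsup_c0} gives $\limsup_{t\to+\infty}h(t)/t\le c_0$. So it remains to prove $\liminf_{t\to+\infty}h(t)/t\ge c_0$. The shifted semi-wave of \cref{th:semi-wave} degenerates here ($L_0=0$), so I would not use it. Instead I would build a lower solution from a \emph{homogeneous} semi-wave travelling at any speed $c'<c_0$. Because the favourable region advances at speed $c_0>c'$, the support of such a lower solution eventually lies strictly inside $\{x<ct\}$. There $A(x-ct)=a_1$ and $\mu(A(\cdot))=\mu(a_1)$, in the spirit of \cref{lem:reduction}.

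\textbf{Step 1 (perturbed homogeneous semi-wave).} Fix $c'\in(0,c_0)$. For small $\delta>0$ replace $(a_1,a_2)$ by $(a_1-2\delta,a_2+2\delta)$, with coexistence state $(u^*_\delta,v^*_\delta)$ as in \cref{lem:liminf}. By \cref{rm:perturbed} the weak competition condition persists and $c_{0,\delta}\to c_0$, so we may take $c'<c_{0,\delta}$. Let $(\Psi^\delta_{c'},\Phi^\delta_{c'})$ be the semi-wave of \cref{th:semi-wave0} for the perturbed system with speed $c'$. As in the proof of \cref{th:semi-wave}(iii), the function $G_\delta(s)=-\mu(a_1)(\Psi^\delta_s)'(0)-s$ is positive on $[0,c_{0,\delta})$. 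Hence $-\mu(a_1)(\Psi^\delta_{c'})'(0)>c'$.

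\textbf{Step 2 (comparison).} Choose $T_2,R_1,T_0$ exactly as in the proof of \cref{lem:liminf}, using \eqref{eq:spread_conv} and the bound $v<(a_2+\delta)/c_2$. Additionally require $c_0T_0>1$. Set
\[
\underline h(t):=c'(t-T_0)+L,\qquad \underline u(t,x):=\Psi^\delta_{c'}(x-\underline h(t)),\qquad \overline v(t,x):=\Phi^\delta_{c'}(x-\underline h(t)),
\]
with a fixed $L<\min\{1,h(T_0)\}$. Then for $t\ge T_0$,
\[
\underline h(t)-c_0t=L-c_0T_0-(c_0-c')(t-T_0)<0,
\]
so $A(x-ct)=a_1\ge a_1-2\delta$ on the support of $\underline u$. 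This gives the $u$-inequality of \cref{th7}(b), while the $v$-inequality follows from $a_2+2\delta\ge a_2$. At the free boundary we get $\underline h'=c'<-\mu(a_1)(\Psi^\delta_{c'})'(0)=-\mu(A(\underline h(t)-ct))\underline u_x(t,\underline h(t))$. At $x=0$ we use the second alternative of \cref{th7}(b), via $\underline u<u^*_\delta<u$ and $\overline v>v^*_\delta>v$ on $[0,R_1]$. The initial ordering at $T_0$ is checked exactly as in \cref{lem:liminf}, using the monotonicity of $\Phi^\delta_{c'}$ and taking $R_1$ large relative to $L$. \cref{th7}(b) then yields $h(t)\ge c'(t-T_0)+L$, so $\liminf_{t\to+\infty}h(t)/t\ge c'$. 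Letting $c'\uparrow c_0$ completes the proof.

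\textbf{Main obstacle.} The delicate point is the strict inequality $-\mu(a_1)(\Psi^\delta_{c'})'(0)>c'$ for the perturbed problem. This requires that $c_{0,\delta}$ is the unique zero of $G_\delta$ and that $c'<c_{0,\delta}$, which is where the continuity statement of \cref{rm:perturbed} is essential. A secondary point is to make sure the Neumann-free second alternative at $x=0$ is compatible with $\underline u$ being decreasing in $x$; this is handled exactly as in \cref{lem:liminf}.
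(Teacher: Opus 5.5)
Your proof is correct, but it takes a different route from the paper.

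\textbf{The paper's route.} The paper gets the lower bound by monotonicity in the shifting speed. For $c_\delta=c_0-\delta$, it notes that $(u,v,h)$ is an upper solution of the problem with $c$ replaced by $c_\delta$, so $h_\delta\le h$. It then shows that the $c_\delta$-problem still spreads, using continuous dependence of the solution on $c$ over a fixed time interval together with the sharp vanishing bound $h_\infty\le R^*$ of \cref{th1:lm1}. Finally it applies \cref{lem:liminf}, and hence the shifted semi-waves of \cref{th:semi-wave}, to the speed $c_\delta<c_0$.

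\textbf{Your route.} You build the lower solution directly from perturbed homogeneous semi-waves of speed $c'<c_0$. Since $\underline h(t)-c_0t<0$ for all $t\ge T_0$, the comparison functions only ever see the constants $a_1$ and $\mu(a_1)$. The free-boundary inequality then reduces to $G_\delta(c')>0$, which follows from the uniqueness of the zero of $G_\delta$ together with $c_{0,\delta}\to c_0$.

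\textbf{What each route buys.} Your argument avoids the continuous dependence of the solution on $c$, which the paper only sketches. It also never uses the bound $h_\infty\le R^*$, whereas \cref{rm:hinfty} presents that bound as what makes the borderline case accessible. Moreover, the same construction with any $c'<\min\{c,c_0\}$ gives $\liminf_{t\to\infty}h(t)/t\ge\min\{c,c_0\}$ for every $c>0$, without shifted semi-waves. The paper's route gains economy, since it reuses \cref{lem:liminf} as a black box. For $c<c_0$ the shifted semi-waves also give the sharper bound $h(t)\ge ct-C$, but at $c=c_0$ both arguments yield only the $\liminf$ statement.

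\textbf{Two small points of bookkeeping.}
\begin{itemize}
\item Take $L\in(0,\min\{1,h(T_0)\})$, since $\underline h(T_0)$ must be positive for \cref{th7}.
\item Fix $R_1\ge1$ with $\Phi^\delta_{c'}>\frac{a_2+\delta}{c_2}$ on $[R_1-1,\infty)$ before choosing $T_0$. This removes any circularity among $R_1$, $T_0$ and $L$, and also covers the $u$-ordering on $[0,L]\subset[0,R_1]$.
\end{itemize}
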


\begin{proof}
	By \cref{lm:limsup_c0} it suffices to prove
	$\liminf_{t\to+\infty}h(t)/t\ge c_0$. For small $\delta>0$ put
	$c_\delta:=c_0-\delta>0$ and let $(u_\delta,v_\delta,h_\delta)$ be the
	solution of \eqref{main} with $c$ replaced by $c_\delta$ and with the same
	initial data. Since $c_\delta<c$ and $A$ is nonincreasing, we have
	$A(x-ct)\ge A(x-c_\delta t)$ and $\mu(A(h(t)-ct))\ge\mu(A(h(t)-c_\delta t))$
	for $t>0$; as $-u_x(t,h(t))>0$, this shows that $(u,v,h)$ is an upper solution
	of the $c_\delta$-problem in the sense of \cref{th7}(a) (first alternative at
	$x=0$). Hence
	\begin{equation}\label{eq:cdelta}
		h_\delta(t)\le h(t)\qquad (t\ge0).
	\end{equation}

	We next check that spreading occurs for the $c_\delta$-problem when $\delta$
	is small. Since $h_\infty=+\infty$, there is $T>0$ with $h(T)=R^*+1$. The
	solution of \eqref{main} depends continuously on the parameter $c$, uniformly
	on the fixed interval $[0,T]$ (this follows from the contraction mapping
	construction of \cref{th4}, whose constants depend continuously on $c$,
	together with uniqueness and a continuation argument on $[0,T]$, the solution
	being global with bounds uniform in $c$ by \cref{th6}), so we may fix
	$\delta>0$ small enough that $h_\delta(T)>R^*$. If vanishing occurred for the
	$c_\delta$-problem, \cref{th1:lm1} would give $h_{\delta,\infty}\le R^*$,
	contradicting $h_\delta(T)>R^*$. Hence spreading occurs for the
	$c_\delta$-problem.

	Since $c_\delta<c_0$, \cref{lem:liminf} applied to the $c_\delta$-problem
	yields $\liminf_{t\to+\infty}h_\delta(t)/t\ge c_\delta$, and by
	\eqref{eq:cdelta},
	\[
	\liminf_{t\to+\infty}\frac{h(t)}{t}\ \ge\ \liminf_{t\to+\infty}\frac{h_\delta(t)}{t}\ \ge\ c_0-\delta .
	\]
	Letting $\delta\to0$ completes the proof.
\end{proof}

\begin{lemma}\label{lm:g}
	Suppose $c > c_0$. Then $\displaystyle\lim_{t\to+\infty} \frac{h(t)}{t} = c_0$.
\end{lemma}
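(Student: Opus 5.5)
The upper bound $\limsup_{t\to\infty}h(t)/t\le c_0$ is already given by \cref{lm:limsup_c0}, so it remains to prove $\liminf_{t\to\infty}h(t)/t\ge c_0$. The idea, in the spirit of \cref{rm:hinfty}, is that when $c>c_0$ the front eventually falls behind the climate edge $ct$. Once this happens, the shifting environment is no longer felt and we can compare with the homogeneous problem \eqref{main2}.

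\emph{Step 1: the front falls behind the climate edge.} Fix $c'\in(c_0,c)$. By \cref{lm:limsup_c0} there is $T_0>0$ with $h(t)\le c't<ct$ for all $t\ge T_0$. The hypothesis of \cref{lem:reduction} therefore holds with this $T_0$. It follows that $(u(\cdot+T_0,\cdot),v(\cdot+T_0,\cdot),h(\cdot+T_0))$ solves \eqref{main2} exactly, with initial data $(u(T_0,\cdot),v(T_0,\cdot),h(T_0))$.

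\emph{Step 2: the new data are admissible.} We check that these data satisfy \eqref{initial}, exactly as in the proof of \cref{th1:lm1}. We have $u(T_0,\cdot)>0$ on $[0,h(T_0))$, $u_x(T_0,0)=0=u(T_0,h(T_0))$, $v(T_0,\cdot)>0$ and $v_x(T_0,0)=0$. Moreover $\liminf_{x\to\infty}v(T_0,x)\ge\hat v(T_0)>0$ by \cref{lem:vlow}. The $C^2$ regularity at the positive time $T_0$ follows from parabolic regularity. Since $h_\infty=+\infty$, the shifted free boundary also tends to $+\infty$, so spreading occurs for \eqref{main2}.

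\emph{Step 3: apply the homogeneous result.} By \cite[Theorem 1.3]{MR3986328}, $h(t+T_0)/t\to c_0$. Hence $h(t)/t\to c_0$, which gives the liminf bound and, together with \cref{lm:limsup_c0}, the full limit.

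The only delicate point is Step 2, namely that the homogeneous spreading-speed theorem requires the structural hypothesis $\liminf_{x\to\infty}v_0>0$ (cf.\ \cite[Remark 3.5]{MR3609207}). \Cref{lem:vlow} supplies this uniformly in $x$, so I do not expect a real obstacle. The argument is short precisely because $c>c_0$ together with \cref{lm:limsup_c0} produces the strict inequality $h(t)<ct$ needed to invoke \cref{lem:reduction}.
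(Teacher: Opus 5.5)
Your proposal is correct and follows essentially the same route as the paper. The paper likewise uses \cref{lm:limsup_c0} and $c_0<c$ to get $h(t)<ct$ for large $t$, applies \cref{lem:reduction} to reduce to \eqref{main2} with data at time $T_0$ (admissible by \cref{lem:vlow}), and concludes from \cite[Theorem 1.3]{MR3986328}; your intermediate choice of $c'\in(c_0,c)$ and your remark on the regularity of the data at $T_0$ are harmless refinements of that argument.
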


\begin{proof}
	By \cref{lm:limsup_c0}, $\limsup_{t\to+\infty}h(t)/t\le c_0<c$, so there
	exists $T_0>0$ with $h(t)<ct$ for all $t>T_0$. By \cref{lem:reduction},
	$(t,x)\mapsto(u(t+T_0,x),v(t+T_0,x),h(t+T_0))$ solves \eqref{main2} with data
	$(u(T_0,\cdot),v(T_0,\cdot),h(T_0))$, which satisfy \eqref{initial} (the
	condition $\liminf_{x\to\infty}v(T_0,x)>0$ following from \cref{lem:vlow}).
	Since $h_\infty=+\infty$, spreading occurs for that problem, and
	\cite[Theorem 1.3]{MR3986328} gives $h(t)/t\to c_0$.
\end{proof}

\begin{proof}[Proof of \cref{th:spreading_speed}]
	Combine \cref{lem:limsup}, \cref{lem:liminf}, \cref{lm:eq} and \cref{lm:g}.
\end{proof}

\section{Numerical simulations}\label{sec_7}

In this section we illustrate the theoretical results by numerical experiments.
Problem \eqref{main} is discretised by a finite difference scheme on the
straightened domain, combined with a Newton--Raphson iteration at each time step.

In all the figures below the parameters are fixed as
\[
d_1 = 0.02,\quad a_1 = 0.14,\quad b_1 = 0.4,\quad c_1 = 0.11,\quad
d_2 = 0.018,\quad a_2 = 0.18,\quad b_2 = 0.1,\quad c_2 = 0.2,
\]
\[
a_0 = -0.2,\quad l_0 = 1,\quad \mu(s) = 0.2 + 0.1s,\quad
A(\xi) = \begin{cases}
	a_1, & \xi \le 0, \\
	a_0 + \frac{1}{2} (a_1 - a_0)\big(\cos( \pi \xi / l_0 ) + 1\big), & 0 < \xi < l_0, \\
	a_0, & \xi \ge l_0,
\end{cases}
\]
and the initial data are
$u_0(x) = 0.2 \cos\big(\frac{\pi x}{2h_0}\big)$ and
$v_0(x) = 0.2 \cos\big(\frac{\pi x}{2}\big) + 0.3$, where $h_0$ is the initial
size of the invaded range. These values satisfy all our standing assumptions:
$\mu$ is increasing on $[a_0,a_1]$ with $\mu(a_0)=0.18>0$, and
\[
\frac{c_1}{c_2} = 0.55 < \frac{a_1}{a_2}\approx0.778 < \frac{b_1}{b_2} = 4,
\]
so that \eqref{weak_competition} holds. The corresponding steady states are
\[
u^*\approx0.1188,\qquad v^*\approx0.8406,\qquad \frac{a_2}{c_2}=0.9,
\]
and the two thresholds of \eqref{Rstar} are $R_0\approx0.594$ and
$R^*\approx1.097$.

\subsection{The spreading--vanishing dichotomy}

We first fix the shifting speed at $c=0.04$ and vary $h_0$.

\smallskip
\noindent\textbf{Vanishing of $u$.} For $h_0=0.3$ (note $h_0<R_0$) the invasive
species fails to establish itself and dies out; the free boundary converges to
$h_\infty\approx0.91$, which is consistent with the theoretical bound
$h_\infty\le R^*\approx1.097$ of \cref{th:dynamics}(ii). Relieved from the
competitive pressure of $u$, the native species recovers its own carrying
capacity $a_2/c_2=0.9$. See \cref{fig:u_vanishing,fig:u_vanishing_3d,fig:u_vanishing_ht}.

\begin{figure}[htbp]
	\centering
	\begin{subfigure}{0.45\textwidth}
		\centering
		\includegraphics[width=\linewidth]{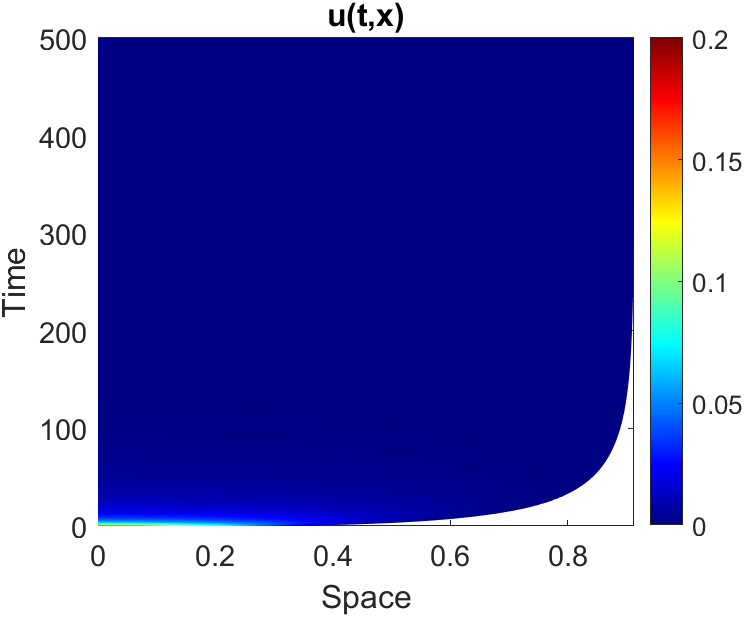}
		\caption{}
		\label{vanishing_u}
	\end{subfigure}
	\hfill
	\begin{subfigure}{0.45\textwidth}
		\centering
		\includegraphics[width=\linewidth]{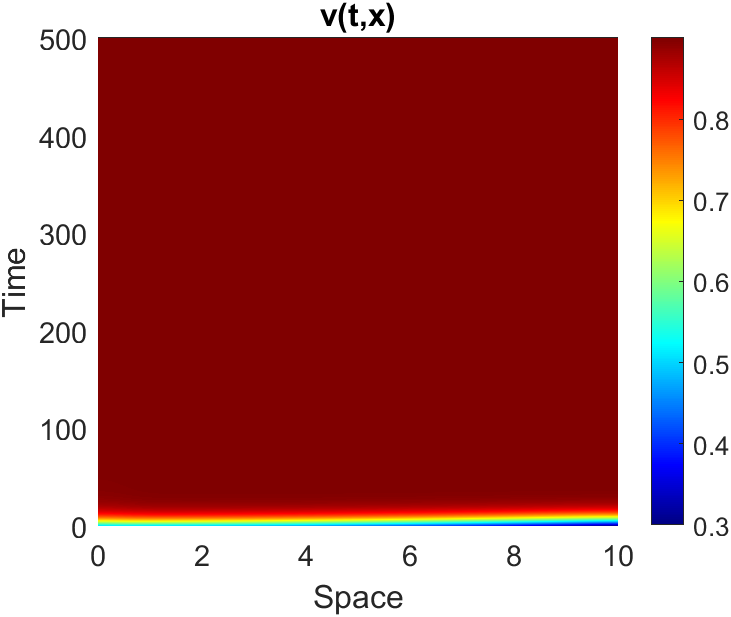}
		\caption{}
		\label{vanishing_v}
	\end{subfigure}
	\caption{Vanishing of the invasive species with $c=0.04$ and $h_0 = 0.3$: (A) heatmap of $u(t,x)$; (B) heatmap of $v(t,x)$.}
	\label{fig:u_vanishing}
\end{figure}

\begin{figure}[htbp]
	\centering
	\begin{subfigure}{0.45\textwidth}
		\centering
		\includegraphics[width=\linewidth]{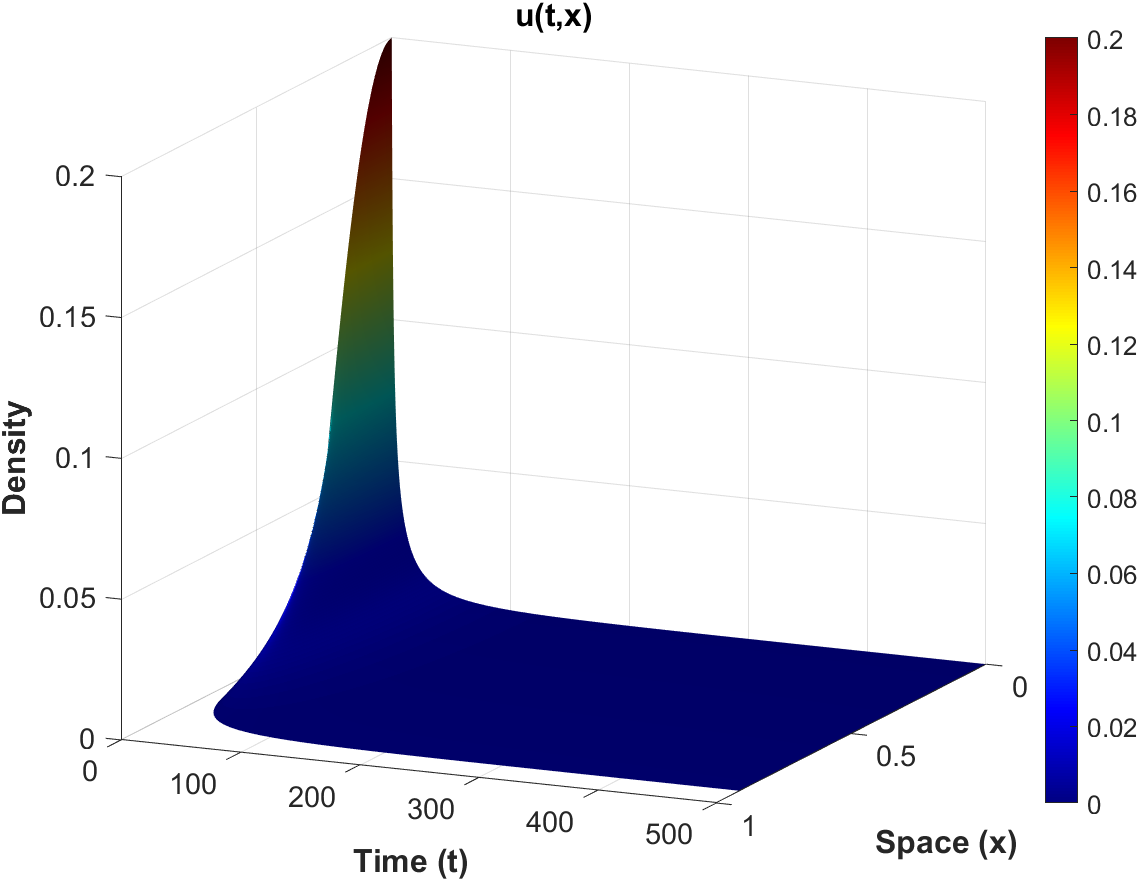}
		\caption{}
		\label{vanishing_u_3d}
	\end{subfigure}
	\hfill
	\begin{subfigure}{0.45\textwidth}
		\centering
		\includegraphics[width=\linewidth]{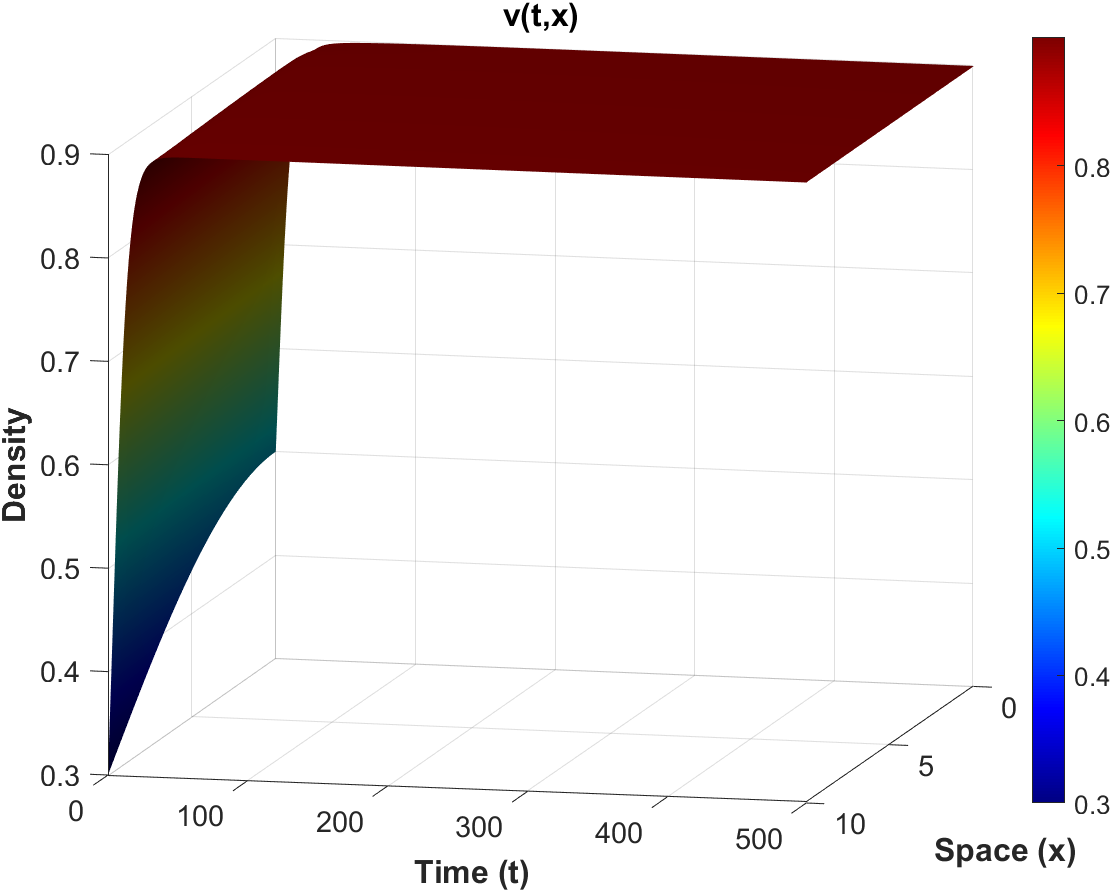}
		\caption{}
		\label{vanishing_v_3d}
	\end{subfigure}
	\caption{The same simulation as in \cref{fig:u_vanishing}, shown as surface plots: (A) $u(t,x)$; (B) $v(t,x)$.}
	\label{fig:u_vanishing_3d}
\end{figure}

\begin{figure}[htbp]
	\centering
	\includegraphics[scale=0.8]{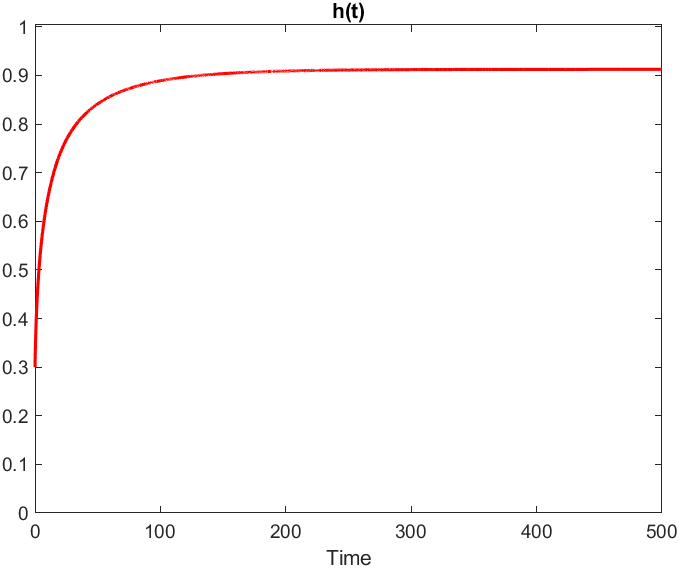}
	\caption{The free boundary $h(t)$ when $u$ vanishes ($c = 0.04$, $h_0 = 0.3$). Here $h_\infty\approx0.91\le R^*\approx1.097$.}
	\label{fig:u_vanishing_ht}
\end{figure}

\noindent\textbf{Spreading of $u$.} For $h_0=1$ the invasion succeeds: the range
$[0,h(t)]$ expands indefinitely and the two densities converge to the coexistence
state $(u^*,v^*)\approx(0.12,0.84)$. Ahead of the invasion front the native
species sits at its own carrying capacity $a_2/c_2=0.9$; behind the front it
drops to $v^*\approx0.84$, so that the invasion depresses, but does not
eliminate, the resident population. See \cref{fig:heat_spread,fig:byw}. Note that
$h_0=1<R^*\approx1.097$, so this run also shows that the sufficient condition
$h_0\ge R^*$ of \cref{th:criteria}(i) is not necessary.

\begin{figure}[htbp]
	\centering
	\begin{subfigure}{0.45\textwidth}
		\centering
		\includegraphics[width=\linewidth]{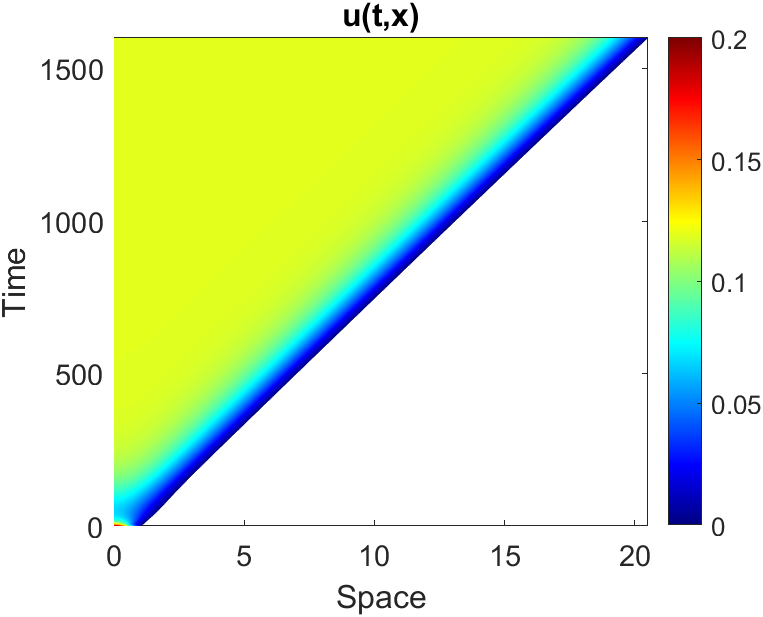}
		\caption{}
		\label{spreading_heat_u}
	\end{subfigure}
	\hfill
	\begin{subfigure}{0.45\textwidth}
		\centering
		\includegraphics[width=\linewidth]{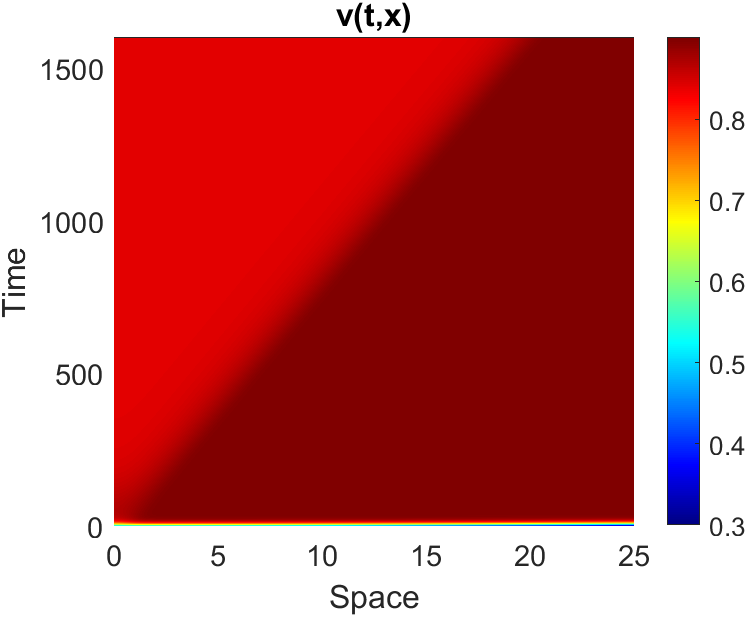}
		\caption{}
		\label{spreading_heat_v}
	\end{subfigure}
	\caption{Spreading of the invasive species with $c=0.04$ and $h_0=1$: (A) heatmap of $u(t,x)$; (B) heatmap of $v(t,x)$. The straight interface is the invasion front $x=h(t)$.}
	\label{fig:heat_spread}
\end{figure}

\begin{figure}[htbp]
	\centering
	\begin{subfigure}{0.45\textwidth}
		\centering
		\includegraphics[width=\linewidth]{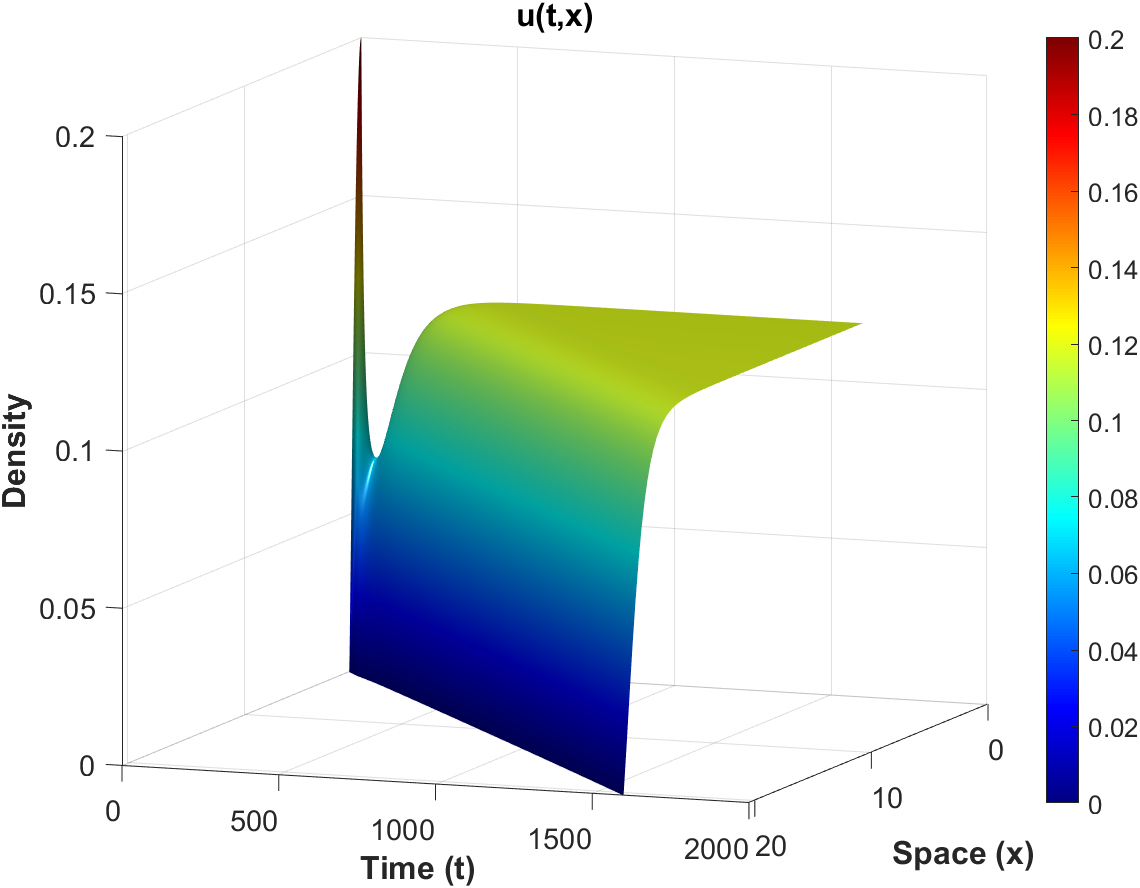}
		\caption{}
		\label{u_spreading_a}
	\end{subfigure}
	\hfill
	\begin{subfigure}{0.45\textwidth}
		\centering
		\includegraphics[width=\linewidth]{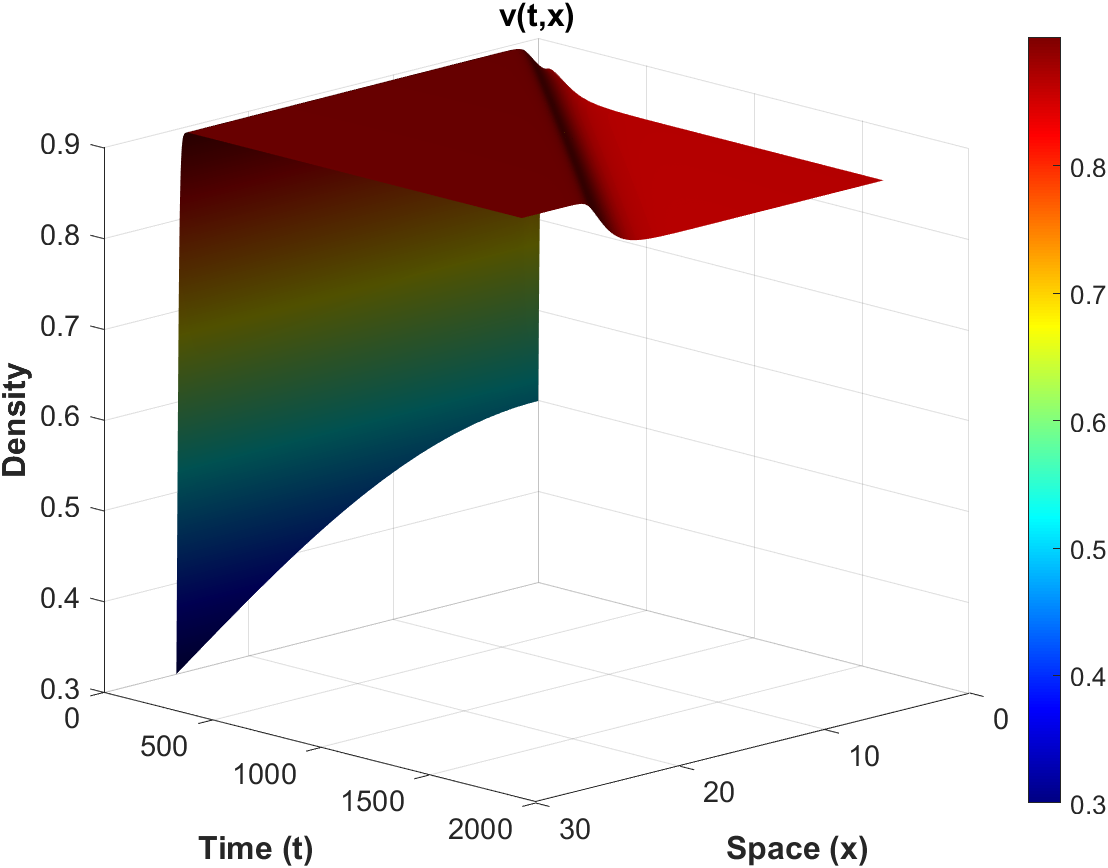}
		\caption{}
		\label{u_spreading_b}
	\end{subfigure}
	\caption{The same simulation as in \cref{fig:heat_spread}, shown as surface plots: (A) $u(t,x)$ on the moving domain; (B) $v(t,x)$ on the fixed domain.}
	\label{fig:byw}
\end{figure}

\subsection{The asymptotic spreading speed}

We now fix $h_0=1$, so that spreading occurs, and vary the shifting speed $c$.
\Cref{fig:hovertc004} displays the ratio $h(t)/t$ for $c=0.04$: it stabilises
around $0.0128$, that is, at a value strictly below $c$. This is the regime
$c>c_0$ of \cref{th:spreading_speed}, and the plateau provides a numerical
estimate $c_0\approx0.0128$ of the intrinsic speed. \Cref{fig:compare} and
\cref{table1} compare $h(t)/t$ for several values of $c$: for $c\le0.01$ the
ratio settles near $c$, while for $c\ge0.02$ it settles near the same value
$c_0\approx0.0128$ regardless of $c$. This is in complete agreement with the
formula $\lim_{t\to\infty}h(t)/t=\min\{c,c_0\}$. The slight excess of the tabulated
values over $\min\{c,c_0\}$ for the smaller speeds is a finite-time effect: when
$c<c_0$ one has $h(t)=ct+O(1)$, so that $h(t)/t-c$ decays only like $1/t$.

Finally, running the same code on the single species problem \eqref{single_hom}
(that is, with $c_1=0$ and $A\equiv a_1$) gives $c_0^{\rm sing}\approx0.040$ for
these parameters. The intrinsic speed of the invader is therefore reduced by a
factor of more than three by the mere presence of the native competitor, which
quantifies \cref{lm:c0sing} and \cref{rm:speed}.

\begin{figure}[htbp]
	\centering
	\begin{subfigure}{0.45\textwidth}
		\centering
		\includegraphics[width=\linewidth]{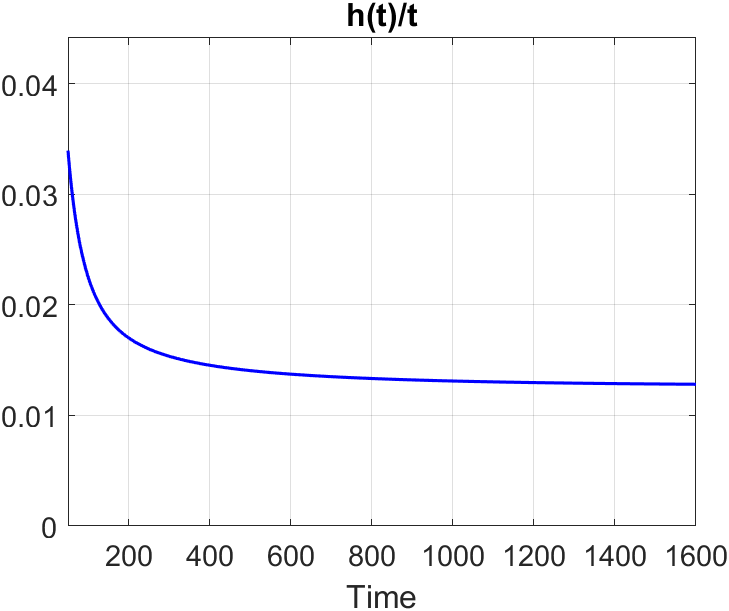}
		\caption{}
		\label{hovertc004_a}
	\end{subfigure}
	\hfill
	\begin{subfigure}{0.45\textwidth}
		\centering
		\includegraphics[width=\linewidth]{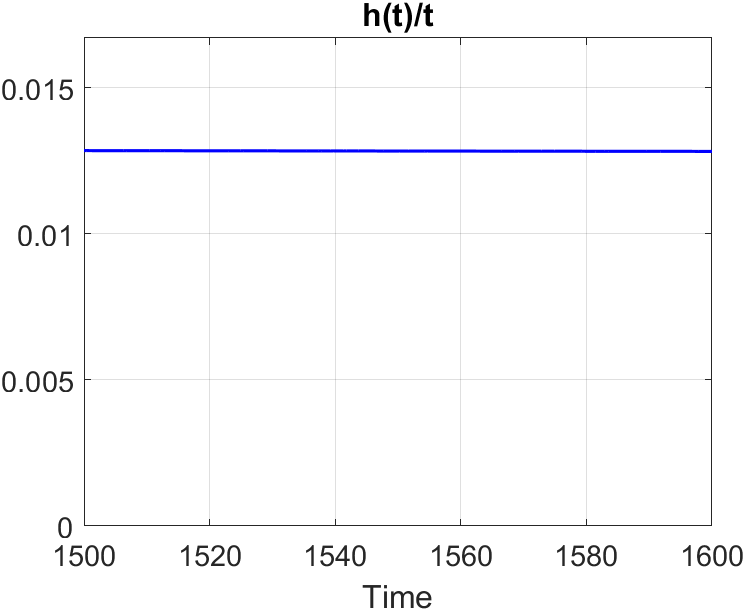}
		\caption{}
		\label{hovertc004_b}
	\end{subfigure}
	\caption{The ratio $h(t)/t$ for $c = 0.04$: (A) over $t\in[50, 1600]$; (B) a zoom for large $t$.}
	\label{fig:hovertc004}
\end{figure}

\begin{table}[htbp]
	\centering
	\begin{tabular}{c|cccccccc}
		$c$ & $0.007$ & $0.008$ & $0.009$ & $0.01$ & $0.02$ & $0.03$ & $0.04$ & $0.05$\\
		\hline
		$h(t)/t$ at $t\approx1600$ & $0.0074$ & $0.0084$ & $0.0094$ & $0.0103$ & $0.0125$ & $0.0127$ & $0.0128$ & $0.0128$
	\end{tabular}
	\caption{The ratio $h(t)/t$ at $t\approx1600$ for selected values of $c$ ($h_0=1$). The values are consistent with $\min\{c,c_0\}$ and $c_0\approx0.0128$.}
	\label{table1}
\end{table}

\begin{figure}[htbp]
	\centering
	\includegraphics[scale=0.8]{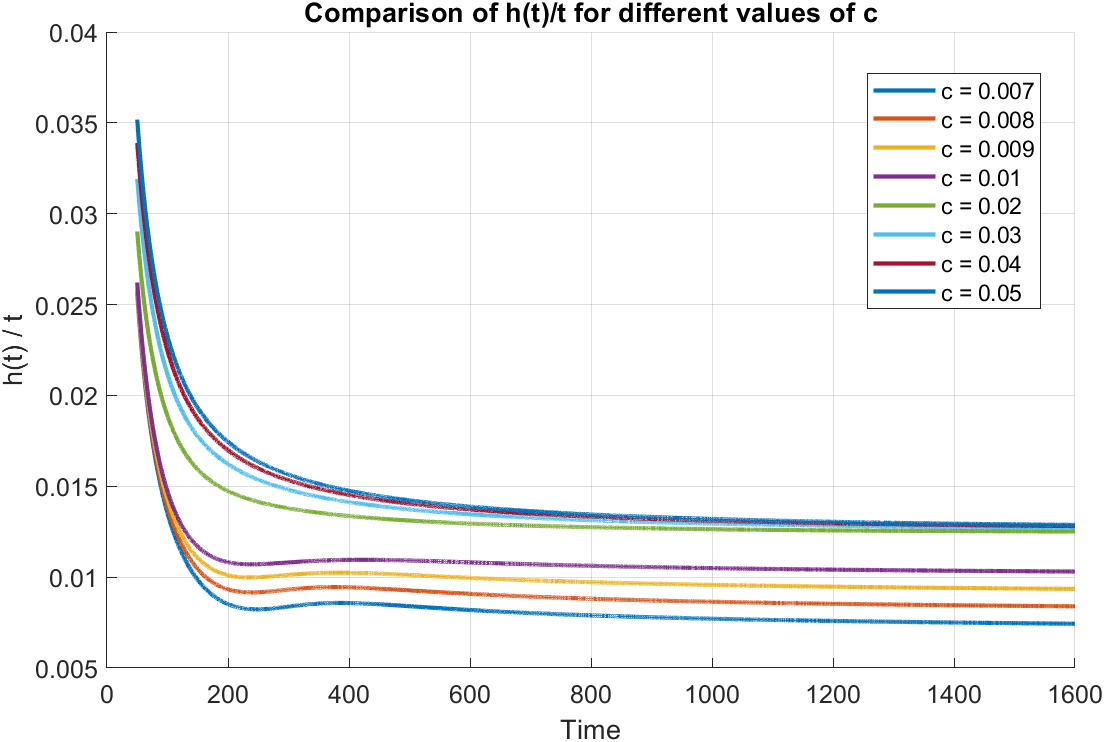}
	\caption{Comparison of $h(t)/t$ over $t \in [50,1600]$ for selected values of $c$.}
	\label{fig:compare}
\end{figure}

\section{Concluding remarks}\label{sec_8}

We have determined the long-time dynamics and the exact asymptotic spreading
speed of a free boundary competition system in which an invasive species benefits
from a climate shift while the native competitor occupies a fixed, globally
favourable habitat, in the weak competition regime where the two species may
coexist. The dichotomy of \cref{th:dynamics}, the criteria of
\cref{th:criteria} and the formula $\min\{c,c_0\}$ of
\cref{th:spreading_speed} complete, for the model \eqref{main}, the picture
started in \cite{DLN2026a} for the weak--strong regimes.

Several natural questions remain open. First, as in \cite{HU20205931}, we do not
know whether the threshold $\sigma_0$ of \cref{th:criteria}(ii) can be infinite.
Second, when $c<c_0$ our result gives the speed of the front but not its precise
profile. In the single species case \cite[Theorem 1.2]{HU20205931} shows that
$h(t)-ct\to L_0$ and that the solution converges to the semi-wave
$\psi_{L_0}$, and it is natural to expect the analogous statement here, with
$(\psi_{L_0},\varphi_{L_0})$ given by \cref{th:semi-wave}. Third, the strong
competition case $\frac{b_1}{b_2}<\frac{a_1}{a_2}<\frac{c_1}{c_2}$ is covered
neither by \cite{DLN2026a} nor by the present paper, and this exhausts the
parameter space. In that regime the kinetic system is \emph{bistable}: both
semi-trivial states $(\frac{a_1}{b_1},0)$ and $(0,\frac{a_2}{c_2})$ are locally
stable, one has $b_1c_2-b_2c_1<0$, and the coexistence state \eqref{coexist},
although still positive, is a saddle, so that coexistence is not the generic
outcome. Ecologically this is the priority effect, in which the species that
occupies the habitat first tends to keep it. Two of the tools used here then break down, already for the
homogeneous problem \eqref{main2}. The monotone iteration of \cref{th1:lm2},
which identifies the state behind the front, relies on the existence of a
globally attracting kinetic state and no longer closes. The travelling waves of
a bistable competition system come with a single, possibly negative, speed
rather than with a family parametrised by $c\in[0,c^*)$, so that the
characterisation \eqref{c0} of $c_0$ loses its meaning. The homogeneous problem
has since been settled in paper \cite{DL2026c}, which truncates
the semi-wave problem to a bounded interval, where the sliding method applies
with strict boundary inequalities, and obtains a unique monotone semi-wave for
every speed below the bistable one together with the selection of $c_0(\mu)$.
What does not carry over to the shifting setting is the criterion for the
invader to establish itself: under strong competition the growth rate at the
leading edge is negative, so a small founding population can never invade, and
\cite{DL2026c} replaces the criterion by the domination of a stationary pair on
a bounded interval. The analogue of that pair in a moving favourable region is
not available. Fourth, one may ask what happens when the native
competitor is itself subject to a shifting environment. If its growth rate is
replaced by a function $\tilde a_2(x-ct)$ moving at the \emph{same} speed $c$,
the system remains autonomous in the frame $\xi=x-ct$, so that semi-waves in the
sense of \cref{sec_5} still make sense and the comparison arguments of
\cref{sec_6} are unaffected; what has to be redone is the input from
\cite{MR3986328}, since \cref{th:c*,th:semi-wave0} are stated for a constant
$a_2$, and consequently also the description of the limiting state of $v$ in
\cref{th:dynamics}, which is then an $x$-dependent profile rather than the
constant $\frac{a_2}{c_2}$. The case of two climatic edges moving at different
speeds admits no autonomous moving frame and seems out of reach of the present
techniques. Finally, it
would be interesting to treat the corresponding model with nonlocal diffusion, in
the spirit of \cite{MR4873231}.

\appendix

\section{Proof of \texorpdfstring{\cref{th4}}{the local existence theorem}}\label{sec_app}

We follow the strategy of \cite[Theorem 2.1]{MR3327894}; the details are included
for completeness, since the shifting terms $A(\cdot-ct)$ and $\mu(A(\cdot))$
require some care.

\textit{Step 1: straightening the free boundary.} Let $\zeta\in C^3([0,\infty))$
satisfy
\[
\zeta(y) = 1 \text{ for } |y - h_0| < \tfrac{h_0}{8}, \quad \zeta(y) = 0 \text{ for } |y - h_0| > \tfrac{h_0}{2}, \quad |\zeta'(y)| < \tfrac{6}{h_0}\ \ \forall y,
\]
and set $x=F(t,y):=y+\zeta(y)(h(t)-h_0)$ for $y\ge0$. If
$|h(t)-h_0|\le\frac{h_0}{8}$, then
$\partial_yF=1+\zeta'(y)(h(t)-h_0)\ge1-\frac{6}{h_0}\cdot\frac{h_0}{8}>0$, so
$F(t,\cdot)$ is a diffeomorphism of $[0,\infty)$ mapping $[0,h_0]$ onto
$[0,h(t)]$. Writing
\[
\sqrt{B(h,y)}:=\frac{1}{1+\zeta'(y)(h-h_0)},\quad
C(h,y):=-\frac{\zeta''(y)(h - h_0)}{[1+\zeta'(y)(h-h_0)]^3},\quad
D(h,y):=\frac{\zeta(y)}{1 + \zeta'(y)(h - h_0)},
\]
and $w(t,y):=u(t,F(t,y))$, $z(t,y):=v(t,F(t,y))$, problem \eqref{main} becomes
\begin{equation}\label{eq:transform}
	\begin{cases}
		w_t - d_1 B w_{yy} - (d_1C + h' D) w_y = \big(A(F(t,y) - ct) - b_1w - c_1z \big)w, & t > 0, ~ 0 \le y < h_0,\\
		z_t - d_2 B z_{yy} - (d_2C + h' D) z_y = \left(a_2 -b_2w -c_2z \right)z, & t >0, ~ 0 \le y < \infty,\\
		w_y(t,0) = z_y(t,0) = 0, \quad w(t, y) = 0 \ (y\ge h_0), & t > 0,\\
		h'(t) = -\mu\big(A(h(t) - ct) \big)  w_y(t, h_0), & t > 0,\\
		h(0) = h_0, ~ w(0, y) = u_0(y),\ z(0, y) = v_0(y). &
	\end{cases}
\end{equation}
Set $h^* := -\mu(A(h_0))u_0'(h_0)$, $\Delta_T := [0,T] \times [0,h_0]$ and
$\Delta_T^\infty := [0, T] \times [0, \infty)$. For
$0 < T < \frac{h_0}{8(1+h^*)}$ define $\Gamma_T := W_T \times Z_T \times H_T$,
where
\begin{align*}
	H_T &= \left\{h \in C^1([0, T]) : h(0) = h_0, ~ h'(0) = h^*, ~ \|h' - h^*\|_{C([0,T])} \le 1 \right\}, \\
	W_T &= \left\{ w \in C(\Delta_T^\infty) : w \equiv 0 \text{ for } y \ge h_0, ~ w(0,\cdot) = u_0, ~ \|w - u_0\|_{C(\Delta_T)} \le 1 \right\}, \\
	Z_T &= \left\{ z \in C_b(\Delta_T^\infty) : z(0,\cdot) = v_0, ~ \|z - v_0\|_{L^\infty(\Delta_T^\infty)} \le 1 \right\},
\end{align*}
a complete metric space for
$\mathcal{D}\big((w_1, z_1, h_1), (w_2, z_2, h_2)\big) = \|w_1 - w_2\|_{C(\Delta_T)}
+ \|z_1 - z_2\|_{L^\infty(\Delta_T^\infty)} + \|h_1' - h_2'\|_{C([0,T])}$.

\textit{Step 2: a priori estimates.} For $h_1,h_2 \in H_T$, since
$h_1(0)=h_2(0)$,
\begin{equation}\label{eq:h12}
	\|h_1 - h_2\|_{C([0,T])} \le T \|h_1' - h_2'\|_{C([0,T])}.
\end{equation}
Given $(w,z,h)\in\Gamma_T$ put
$f(t,y):=\big(A(F(t,y)-ct)-b_1w-c_1z\big)w$ and
$g(t,y):=(a_2-b_2w-c_2z)z$. Since $\|w\|_\infty\le M_1':=\|u_0\|_{C([0,h_0])}+1$,
$\|z\|_\infty\le M_2':=\|v_0\|_{L^\infty}+1$ and $A\le a_1$, both $f$ and $g$ are
bounded by constants depending only on the data. By $L^p$ theory and Sobolev
embedding, the linear problem obtained from \eqref{eq:transform} by freezing
$(w,z,h)$ in the right hand side and in the coefficients admits a unique bounded
solution $(\tilde w,\tilde z)$ with
\begin{equation}\label{eq:wz}
	\|\tilde{w} \|_{C^{(1+\alpha)/2,1+\alpha}(\Delta_T)} \le C_1, \qquad \|\tilde{z}\|_{C^{(1+\alpha)/2,1+\alpha}(\Delta_T^\infty)} \le C_1,
\end{equation}
where $C_1$ depends on $\alpha,h_0,c,a_0,a_1,a_2,b_i,c_i,d_i$,
$\|u_0\|_{C^2([0,h_0])}$ and $\|v_0\|_{C^2([0,\infty))}$. Define
\[
\tilde{h}(t) := h_0 - \int_0^t \mu \big( A(h(\tau) - c\tau) \big) \tilde{w}_y(\tau, h_0)\,\mathrm{d}\tau ,
\]
so that $\tilde h(0)=h_0$, $\tilde h'(0)=h^*$ and, $\mu\circ A$ being Lipschitz,
\begin{equation}\label{eq:hprime}
	\|\tilde{h}'\|_{C^{\alpha/2}([0,T])} \le C_2 ,
\end{equation}
with $C_2$ depending on $C_1$, $c$ and the Lipschitz constants of $\mu$ and $A$.

\textit{Step 3: the contraction mapping.} Let
$\mathcal{F}(w,z,h):=(\tilde w,\tilde z,\tilde h)$; a triple in $\Gamma_T$ is a
fixed point of $\mathcal F$ if and only if it solves \eqref{eq:transform}. From
\eqref{eq:wz}--\eqref{eq:hprime},
\begin{align*}
	\|\tilde{h}' - h^*\|_{C([0,T])} &\le C_2 T^{\alpha/2}, &
	\|\tilde{w} - u_0\|_{C(\Delta_T)} &\le C_1 T^{(1+\alpha)/2}, &
	\|\tilde{z} - v_0\|_{L^\infty(\Delta_T^\infty)} &\le C_1 T^{(1+\alpha)/2},
\end{align*}
so $\mathcal F(\Gamma_T)\subset\Gamma_T$ as soon as
$T \le \min\{C_2^{-2/\alpha}, C_1^{-2/(1+\alpha)}\}$.

Let $(w_i,z_i,h_i)\in\Gamma_T$ and
$(\tilde w_i,\tilde z_i,\tilde h_i)=\mathcal F(w_i,z_i,h_i)$, $i=1,2$. By
\eqref{eq:wz}--\eqref{eq:hprime} these satisfy
\[
\|\tilde{w}_i \|_{C^{(1 + \alpha)/2, 1 + \alpha}(\Delta_T)}\le C_1,
\qquad
\|\tilde{z}_i \|_{C^{(1 + \alpha)/2, 1 + \alpha}(\Delta_T^\infty)}\le C_1,
\qquad
\|\tilde{h}_i' \|_{C^{\alpha/2}([0,T])}\le C_2 .
\]
The function
$W := \tilde{w}_1 - \tilde{w}_2$ satisfies
\[
\begin{aligned}
	& W_t - d_1 B(h_2, y) W_{yy} - \big(d_1C(h_2, y) + h_2' D(h_2, y) \big)W_y \\
	& = d_1[B(h_1, y) - B(h_2, y)] \tilde{w}_{1,yy} + d_1[C(h_1, y) - C(h_2, y) ] \tilde{w}_{1,y} \\
	& \quad + [h_1' D(h_1, y) - h_2' D(h_2, y)]\tilde{w}_{1,y} + f_1 - f_2, \qquad t > 0, \; 0 < y < h_0, \\
	& W_y(t,0) = 0, \quad W(t, h_0) = 0, \quad W(0, y) = 0 .
\end{aligned}
\]
Using the $L^p$ estimates for parabolic equations and Sobolev's embedding
theorem, and the fact that $B,C,D$ are Lipschitz in $h$, we obtain
\begin{equation}\label{eq:w12}
	\|\tilde{w}_1 - \tilde{w}_2 \|_{C^{(1 + \alpha)/2, 1 + \alpha}(\Delta_T)}
	\le C_3 \big(\|w_1 - w_2\|_{C(\Delta_T)} +\|z_1 - z_2\|_{L^\infty(\Delta_T^\infty)}   +\|h_1 - h_2\|_{C^1([0,T])} \big),
\end{equation}
and similarly
\begin{equation}\label{eq:z12}
	\|\tilde{z}_1 - \tilde{z}_2\|_{C^{(1+\alpha)/2, 1+\alpha}(\Delta_T^\infty)}
	\le C_4 \big(\|w_1 - w_2\|_{C(\Delta_T)} + \|z_1 - z_2\|_{L^\infty(\Delta_T^\infty)} + \|h_1 - h_2\|_{C^1([0,T])}\big),
\end{equation}
where $C_3,C_4$ depend on $\alpha, h_0, c, C_1, C_2$, the parameters of the
system, the Lipschitz constants of $A$ and $\mu$ and the cutoff $\zeta$. Since
$\tilde h_i'(t)=-\mu\big(A(h_i(t)-ct)\big)\tilde w_{i,y}(t,h_0)$ and
$\mu\circ A$ is Lipschitz, there is $L_{\mu,A}>0$ with
\begin{equation}\label{eq:hp12}
	\|\tilde{h}'_1 - \tilde{h}'_2\|_{C^{\alpha/2}([0,T])}
	\le L_{\mu,A} \big( \|\tilde{w}_{1,y} - \tilde{w}_{2,y}\|_{C^{\alpha/2,0}(\Delta_T)}
	+ \|h_1 - h_2\|_{C([0,T])}\big).
\end{equation}
Combining \eqref{eq:h12}, \eqref{eq:w12}, \eqref{eq:z12} and \eqref{eq:hp12} and
assuming $T\le1$,
\[
\begin{aligned}
	& \|\tilde{w}_1 - \tilde{w}_2\|_{C^{(1+\alpha)/2, 1+\alpha}(\Delta_T)} + \|\tilde{z}_1 - \tilde{z}_2\|_{C^{(1+\alpha)/2, 1+\alpha}(\Delta_T^\infty)} + \|\tilde{h}'_1 - \tilde{h}'_2\|_{C^{\alpha/2}([0,T])} \\
	& \quad \le C_5 \big(\|w_1 - w_2\|_{C(\Delta_T)} + \|z_1 - z_2\|_{L^\infty(\Delta_T^\infty)} + \|h'_1 - h'_2\|_{C([0,T])}\big),
\end{aligned}
\]
with $C_5$ depending on $C_3, C_4$ and $L_{\mu,A}$. Hence, choosing
\[
T := \min \left\{ 1, \ \left(\tfrac{1}{2C_5}\right)^{2/\alpha}, \ C_2^{-2/\alpha}, \ C_1^{-2/(1+\alpha)}, \ \tfrac{h_0}{8(1+h^*)} \right\},
\]
we get
\[
\begin{aligned}
	& \|\tilde{w}_1 - \tilde{w}_2\|_{C(\Delta_T)} + \|\tilde{z}_1 - \tilde{z}_2\|_{L^\infty(\Delta_T^\infty)} + \|\tilde{h}'_1 - \tilde{h}'_2\|_{C([0,T])}\\
	&\quad \le C_5 T^{\alpha/2} \big(\|w_1 - w_2\|_{C(\Delta_T)} + \|z_1 - z_2\|_{L^\infty(\Delta_T^\infty)} + \|h'_1 - h'_2\|_{C([0,T])}\big) \\
	&\quad \le \tfrac{1}{2} \big(\|w_1 - w_2\|_{C(\Delta_T)} + \|z_1 - z_2\|_{L^\infty(\Delta_T^\infty)} + \|h'_1 - h'_2\|_{C([0,T])}\big).
\end{aligned}
\]
Thus $\mathcal F$ is a contraction on $\Gamma_T$ and has a unique fixed point
$(w,z,h)$. Undoing the change of variables, $(u,v,h)$ solves \eqref{main}, and
Schauder estimates give the additional regularity
\[
h \in C^{1+\alpha/2}([0,T]), \quad u \in C^{1+\alpha/2, 2+\alpha}(D_T), \quad v \in C^{1+\alpha/2, 2+\alpha}((0,T] \times (0, +\infty)),
\]
so that $(u,v,h)$ is a classical solution. Since all the constants above depend
continuously on the data and on $c$, the solution depends continuously on
$(u_0,v_0,h_0,c,\mu)$, a fact used in \cref{sec_4} and \cref{sec_6}. \qed

%
%
%
%

\bibliographystyle{abbrv}
\bibliography{ref}

\begin{thebibliography}{10}

\bibitem{MR3764580}
W.~Bao, Y.~Du, Z.~Lin, and H.~Zhu.
\newblock Free boundary models for mosquito range movement driven by climate
  warming.
\newblock {\em J. Math. Biol.}, 76(4):841--875, 2018.

\bibitem{MR2471053}
H.~Berestycki, O.~Diekmann, C.~J. Nagelkerke, and P.~A. Zegeling.
\newblock Can a species keep pace with a shifting climate?
\newblock {\em Bull. Math. Biol.}, 71(2):399--429, 2009.

\bibitem{MR69338}
E.~A. Coddington and N.~Levinson.
\newblock {\em Theory of ordinary differential equations}.
\newblock McGraw-Hill Book Co., Inc., New York-Toronto-London, 1955.

\bibitem{DL2026c}
P.~V. Dinh and P.~Le.
\newblock The diffusive competition model with a free boundary: the strong
  competition case.
\newblock {\em Preprint}, 2026.

\bibitem{DLN2026a}
P.~V. Dinh, P.~Le, and T.~D. Nguyen.
\newblock A free boundary problem for spreading of an invasive species in the
  territory of a native competitor under shifting climate.
\newblock {\em Preprint}, 2026.

\bibitem{MR4404216}
Y.~Du.
\newblock Propagation and reaction-diffusion models with free boundaries.
\newblock {\em Bull. Math. Sci.}, 12(1):Paper No. 2230001, 56, 2022.

\bibitem{MR2607347}
Y.~Du and Z.~Lin.
\newblock Spreading-vanishing dichotomy in the diffusive logistic model with a
  free boundary.
\newblock {\em SIAM J. Math. Anal.}, 42(1):377--405, 2010.

\bibitem{MR3327894}
Y.~Du and Z.~Lin.
\newblock The diffusive competition model with a free boundary: invasion of a
  superior or inferior competitor.
\newblock {\em Discrete Contin. Dyn. Syst. Ser. B}, 19(10):3105--3132, 2014.

\bibitem{DU_MA_2001}
Y.~Du and L.~Ma.
\newblock Logistic type equations on $\mathbb{R}^n$ by a squeezing method
  involving boundary blow-up solutions.
\newblock {\em Journal of the London Mathematical Society}, 64(1):107--124,
  2001.

\bibitem{MR4873231}
Y.~Du, W.~Ni, and L.~Shi.
\newblock Long-time dynamics of a competition model with nonlocal diffusion and
  free boundaries: vanishing and spreading of the invader.
\newblock {\em SIAM J. Math. Anal.}, 57(2):1195--1226, 2025.

\bibitem{MR3609207}
Y.~Du, M.~Wang, and M.~Zhou.
\newblock Semi-wave and spreading speed for the diffusive competition model
  with a free boundary.
\newblock {\em J. Math. Pures Appl. (9)}, 107(3):253--287, 2017.

\bibitem{MR3871607}
Y.~Du, L.~Wei, and L.~Zhou.
\newblock Spreading in a shifting environment modeled by the diffusive logistic
  equation with a free boundary.
\newblock {\em J. Dynam. Differential Equations}, 30(4):1389--1426, 2018.

\bibitem{MR658490}
P.~Hartman.
\newblock {\em Ordinary differential equations}.
\newblock Birkh\"{a}user, Boston, MA, second edition, 1982.

\bibitem{MR3691993}
H.~Hu and X.~Zou.
\newblock Existence of an extinction wave in the {F}isher equation with a
  shifting habitat.
\newblock {\em Proc. Amer. Math. Soc.}, 145(11):4763--4771, 2017.

\bibitem{HU20205931}
Y.~Hu, X.~Hao, X.~Song, and Y.~Du.
\newblock A free boundary problem for spreading under shifting climate.
\newblock {\em Journal of Differential Equations}, 269(7):5931--5958, 2020.

\bibitem{MR241821}
O.~A. Ladyzhenskaya, V.~A. Solonnikov, and N.~N. Ural'tseva.
\newblock {\em Linear and quasilinear equations of parabolic type}.
\newblock Izdat. Nauka, Moscow, 1967.

\bibitem{MR4766631}
P.~Le and H.-H. Vo.
\newblock A free boundary model for mosquitoes with conditional dispersal in a
  globally unfavorable environment induced by climate warming.
\newblock {\em J. Dynam. Differential Equations}, 36(2):1703--1719, 2024.

\bibitem{MR3639148}
C.~Lei and Y.~Du.
\newblock Asymptotic profile of the solution to a free boundary problem arising
  in a shifting climate model.
\newblock {\em Discrete Contin. Dyn. Syst. Ser. B}, 22(3):895--911, 2017.

\bibitem{Lei2018}
C.~Lei, H.~Nie, W.~Dong, and Y.~Du.
\newblock Spreading of two competing species governed by a free boundary model
  in a shifting environment.
\newblock {\em Journal of Mathematical Analysis and Applications},
  462:1254--1282, 02 2018.

\bibitem{MR1319817}
H.~L. Smith.
\newblock {\em Monotone dynamical systems}, volume~41 of {\em Mathematical
  Surveys and Monographs}.
\newblock American Mathematical Society, Providence, RI, 1995.
\newblock An introduction to the theory of competitive and cooperative systems.

\bibitem{MR3377515}
H.-H. Vo.
\newblock Persistence versus extinction under a climate change in mixed
  environments.
\newblock {\em J. Differential Equations}, 259(10):4947--4988, 2015.

\bibitem{MR3986328}
Z.~Wang, H.~Nie, and Y.~Du.
\newblock Asymptotic spreading speed for the weak competition system with a
  free boundary.
\newblock {\em Discrete Contin. Dyn. Syst.}, 39(9):5223--5262, 2019.

\end{thebibliography}

\end{document}